 \newtheorem{thm}{Theorem}[section]
 \newtheorem{cor}[thm]{Corollary}
 \newtheorem{lem}[thm]{\bf Lemma}
 \newtheorem{prop}[thm]{\bf Proposition}
 \theoremstyle{definition}
 \newtheorem{defn}[thm]{Definition}
 \theoremstyle{remark}
 \newtheorem{rem}[thm]{\bf Remark}
  \newtheorem{assump}[thm]{\bf Assumption}
 \newtheorem{notat}[thm]{\bf Notation}
  \newtheorem{property}[thm]{\bf Property}
    \newtheorem*{ex}{\bf Example}
 \numberwithin{equation}{section}
\begin{document}

\author
{{\bf  M.~Karmanova, S.~Vodopyanov}\footnote{\footnotesize {\it
Mathematics Subject Classification (2000):} Primary 51F99; Secondary
53B99, 58A99\newline{{\it Keywords:} Carnot manifold,
differentiability, coarea formula, level set}}\thanks{The research
was partially supported by RFBR (Grants 10-01-00662; 11--01--00819),
and the State
Maintenance Program for the Leading Scientific Schools of the
Russian Federation (Grant No. NSh--6613.2010.1).}}
\title{\sc A Coarea Formula for Smooth Contact Mappings of Carnot--Carath\'{e}odory Spaces}

\maketitle

\begin{abstract}
We prove the coarea formula for sufficiently smooth contact mappings
of Carnot manifolds. In particular, we investigate level surfaces of
these mappings, and compare Riemannian and sub-Riemannian measures
on them. Our main tool  is the sharp asymptotic behavior of the
Riemannian measure of the intersection of a tangent plane to a level
surface and a sub-Riemannian ball. This calculation in particular
implies that the sub-Riemannian measure of the set of characteristic
points (i.\,e., the points at which the sub-Riemannian differential
is degenerate) equals zero on almost every level set.
\end{abstract}

\newpage

\tableofcontents

\newpage

\section{Introduction}

This article is devoted to a sub-Riemannian analog
\begin{equation}\label{srcoarea}
\int\limits_{\mathbb M}{\cal
J}^{SR}_{\widetilde{N}}(\varphi,x)\,d{\cal
H}^{\nu}(x)=\int\limits_{\widetilde{\mathbb M}}\,d{\cal
H}^{\tilde{\nu}}(t)\int\limits_{\varphi^{-1}(t)}\,d{\cal H}^{\nu-\tilde{\nu}}(u)
\end{equation}
of the well-known coarea formula {\eqref{coarea_e}}. Here
$\varphi:\mathbb M\to\widetilde{\mathbb M}$ is a smooth contact
mapping of Carnot--Carath\'{e}odory spaces, and ${\cal
J}^{SR}_{\widetilde{N}}(\varphi,x)$ is a sub-Riemannian coarea
factor determined by the values of a sub-Riemannian
differential, or {\it$hc$-differential}. In some sense, we may
regard {\eqref{srcoarea}} as a generalization of the results of
 {\cite{P, H, M00,M05}} and some other articles.

It is well known that the coarea formula
\begin{equation}\label{coarea_e}
\int\limits_{U}{\mathcal J}_k(\varphi,x)\,dx=\int\limits_{\mathbb
R^k}\,dz\int\limits_{\varphi^{-1}(z)}\,d{\mathcal H}^{n-k}(u),
\end{equation}
where ${\mathcal
J}_k(\varphi,x)=\sqrt{\det(D\varphi(x)D\varphi^*(x))}$, has many
applications in analysis on Euclidean spaces
{\cite{kr,fd1,F,eg,gq,fx,ot}}. Here we assume that $\varphi\in
C^1(U,\mathbb R^k)$ with $U\subset\mathbb R^n$
 for $n\geq k$. In
particular, {\eqref{coarea_e}}  applies in the theory of exterior forms and currents, and in  problems about minimal surfaces (see, for
example, {\cite{FF}}). Also, Stokes' formula easily follows from the
coarea formula (see, for instance, {\cite{V1}}). The development of
analysis on more general structures raises a natural question of
extending the coarea formula to  objects with a more general
geometry than in Euclidean spaces, especially to metric spaces and
sub-Riemannian manifolds. In 1999, L.~Ambrosio and B.~Kirchheim
{\cite{AK}} proved an analog of the coarea formula for Lipschitz
mappings defined on an ${\cal H}^n$-rectifiable metric space with
values in $\mathbb R^k$ for $n\geq k$. In 2004, this formula was
established for Lipschitz mappings defined on an ${\cal
H}^n$-rectifiable metric space with values in an ${\cal
H}^k$-rectifiable metric space for $n\geq k$ {\cite{Km1, Km3}}.
Moreover,  conditions  were found on the image and preimage of a
Lipschitz mapping defined on an $\mathcal H^n$-rectifiable metric
space with values in an {\it arbitrary} metric space which are
necessary and sufficient for the validity of the coarea formula.
Independently of that result, the level sets of those mappings were
investigated, and the metric analog of implicit function theorem was
proved {\cite{Km2, Km3, Km4}}.

All results mentioned above were obtained on rectifiable metric spaces.
Note that their metric structure is similar to that of Riemannian
manifolds. However,  there exist {\it non-rectifiable} metric spaces
whose geometry is not comparable to the Riemannian one. {\it Carnot
manifolds} are of special interest (for results on their
non-rectifiability, see, for instance, {\cite{AK}}). Sub-Riemannian
geometry naturally arises in the theory of subelliptic equations,
contact geometry, optimal control theory, non-holonomic mechanics,
neurobiology, and other areas (see, e.~g.,
{\cite{nsw,G,B,M00,MM,MM1,AM,J,vg,vod1,vod2,CS, HP}}). This theory
has many applications. In addition, it has many well-known unsolved
problems.

One of them is the problem of the sub-Riemannian coarea formula,
which is useful for developing a non-holonomic theory of currents,
exterior forms, extremal surfaces (in sub-Riemannian and
sub-Lorentzian geometries), and so forth.

Heisenberg groups and Carnot groups are well-known particular
cases of Carnot manifolds. In 1982, P.~Pansu proved the coarea
formula for real-valued functions defined on a Heisenberg group
{\cite{P}}. Next, J.~Heinonen {\cite{H}} extended this formula to
smooth functions defined on a Carnot group. Another result
concerning the analog of {\eqref{coarea_e}} belongs to V.~Magnani.
In 2000, he proved a {\it coarea inequality} for mappings of Carnot
groups {\cite{M00}}. The equality was established for a mapping from
a Heisenberg group to the Euclidean space~$\mathbb
R^k$~{\cite{M05}}. The validity of
coarea formula even for a model case of a mapping from a
Carnot group to other Carnot group has remained an open question.

The purpose of this paper is to prove the coarea formula for
sufficiently smooth contact mappings $\varphi:\mathbb M\to\widetilde{\mathbb M}$ of
Carnot manifolds. We emphasize that all results are new even
in the particular case of a mapping of Carnot groups.

As we mentioned above, for the first time a non-holonomic analogue of the coarea formula was discovered by P.~Pansu {\cite{P}}.
One of the basic ideas was to prove the sub-Riemannian coarea formula via the Riemannian one:
\begin{multline}\label{riemsub}
{\eqref{coarea_e}}\Rightarrow\int\limits_{U}{\mathcal
J}^{SR}_{\widetilde{N}}(\varphi,x)\,d\mathcal
H^{\nu}(x)\\=\int\limits_{\widetilde{\mathbb M}}\,d\mathcal
H^{\tilde{\nu}}(z)\int\limits_{\varphi^{-1}(z)}\frac{\mathcal
J^{SR}_{\widetilde{N}}(\varphi,u)}{\mathcal J_{\widetilde{N}}(\varphi,x)}\,d{\mathcal
H}^{N-\widetilde{N}}(u)\overset{?}=\int\limits_{\widetilde{\mathbb M}}\,d\mathcal
H^{\tilde{\nu}}(z)\int\limits_{\varphi^{-1}(z)}\,d{\mathcal
H}^{\nu-\tilde{\nu}}(u)
\end{multline}
Here $N$ and $\widetilde{N}$ are the topological dimensions while $\nu$ and $\tilde{\nu}$ are the Hausdorff dimensions of the preimage and image respectively. It is clear that in sub-Riemannian spaces these dimensions differ.
Other authors subsequently used the same idea.

It follows easily from {\eqref{riemsub}} that the crucial point in this method is to understand the relation between the Riemannian and sub-Riemannian measures on the Carnot manifolds themselves and on the level sets. Moreover, an appropriate definition of the sub-Riemannian coarea factor is required. It is well known that the question of measure relation on the given spaces is trivial, whereas those of the geometry of level sets and the sub-Riemannian coarea factor are quite complicated. The main difficulties are related to the peculiarities of sub-Riemannian metrics. The inequivalence of Riemannian and sub-Riemannian metrics is illustrated for example by the fact that the Riemannian distance from the center of a radius $r$ sub-Riemannian ball to its boundary varies from $r$ to $r^M$, $M>1$, where the constant $M$ depends on the structure of the space. Thus, an immediate question arises: how ``sharply'' does the tangent plane approximate a level set? (This question is important since the Riemannian tangency order $o(r)$ is insufficient here: a level surface can ``jump'' out of a sub-Riemannian ball much earlier than necessary if this order of tangency is higher than $o(r)$.) Also, it is a problem whether or not there exists a metric allowing us to describe the shape and geometry of the intersection of a sub-Riemannian ball and a level surface. Even if we answer these two questions, the most difficult question concerns the relation between the Hausdorff dimensions of the image and the intersection of a ball and a level set.

In this article we solve problems stated above. Firstly, we divide the set of all points at which the classical differential is non-degenerate into a regular set and a characteristic set. Next, we define a sub-Riemannian quasimetric~$d_2$ that enables us to calculate the measure of the intersection of a sub-Riemannian ball and a tangent plane (see below Remark~{\ref{tangplane}} for details). A crucial idea in the construction of $d_2$ is based on the fact that a ball in this quasimetric is asymptotically equal to direct product of Euclidean balls:
$$
B^{N}_{d_2}(x,r)\approx B^{n_1}(x,r)\times
B^{n_2}(x,r^2)\times\ldots\times B^{n_M}(x,r^M), \ M>1,
$$
where $N$ is the topological dimension of a Carnot manifold, and $n_i$ are the (topological) dimensions of the Euclidean balls $B^{n_i}$, $i=1,\ldots, M$. Thus, if a plane intersects a ball of this type then we can easily determine the shape of the intersection (contrary to the case of a ``box'' quasimetric with
$$\operatorname{Box}(x,r)\approx Q^{n_1}(x,r)\times Q^{n_2}(x,r^2)\times\ldots\times Q^{n_M}(x,r^M),
$$
where we have Euclidean cubes $Q^{n_i}$, $i=1,\ldots, M$, instead of balls, since the cubes have sections of different shapes). Studying the sharpness of the approximation to a level surface by its tangent plane, we introduce a ``mixed'' quasimetric possessing both Riemannian and sub-Riemannian properties. We prove that at regular points the tangent plane approximates the level surface sufficiently well, and this fact enables us to calculate the (Riemannian) measure of this intersection, which depends on the Hausdorff dimensions of both the image and preimage. In other words, it is equivalent to $r^{\nu-\tilde{\nu}}$ (see Theorems
{\ref{tang_meas}} and {\ref{Leb_meas}}). These results yield analytic expression of the relation between the Riemannian and sub-Riemannian measures at the regular points on level sets (see Theorem~{\ref{derivativemeas}}).

The characteristic set case is a little more complicated since precisely near a characteristic point a surface can ``jump'' out of a sub-Riemannian ball. For this reason we cannot estimate the measure of the intersection of a ball and a surface via the measure of the intersection of a ball and the tangent plane at a characteristic point.
Note also that in all papers mentioned above the preimage has a group structure, whose properties are essential for proving that the measure of the set of all characteristic points on each level surface equals zero. In the case of a mapping of two Carnot manifolds, both the image and preimage lack a group structure, and the approximation of a manifold by its local Carnot group is insufficient for the extension of the methods developed to our case. That is why we create an ``intrinsic'' method for studying the properties of the characteristic set.

The result on the characteristic set is stated in Theorem~{\ref{charsetth}}.

In Section 4 we prove that the degenerate set of the classical differential does not affect either part of the coarea formula.

Finally, in Section 5, we define the sub-Riemannian coarea factor (via the values of the $hc$-differential) and derive the sub-Riemannian coarea formula~{\eqref{srcoarea}}.

For mappings $\varphi:\mathbb H^1\to\mathbb R$, formula {\eqref{srcoarea}} becomes
$$
\int\limits_{\mathbb M}|\nabla_H\varphi(x)|\cdot\frac{\omega_3^2}{\omega_4\cdot 4}\,d{\cal
H}^{\nu}(x)=\int\limits_{\widetilde{\mathbb M}}\,d{\cal
H}^{\tilde{\nu}}(t)\int\limits_{\varphi^{-1}(t)}\,d{\cal H}^{\nu-\tilde{\nu}}(u)
$$
since in this case we have
$${\cal J}^{SR}_{\widetilde{N}}(\varphi,x)=|\nabla_H\varphi(x)|\cdot\frac{\omega_3^2}{\omega_4\cdot 4}.
$$
Here $\nabla_H\varphi(x)$ is the ``horizontal part'' of the gradient of $\varphi$ at $x$, and the coefficient $\frac{\omega_3^2}{\omega_4\cdot 4}$ appears due to the choice of a constant factor in the definition of the Hausdorff measure (to define $\mathcal H^{\mu}$ we set this factor to $\omega_{\mu}$, and if we consider $2^{\mu}$ instead of $\omega_{\mu}$ then this factor will be equal to 1); cf.~{\cite{P}}. For $\varphi:\mathbb H^n\to\mathbb R^k$, $k\leq 2n$, we deduce that
\begin{multline*}
\int\limits_{\mathbb M}\sqrt{\det(D_H\varphi(x)D_H\varphi(x)^*)}\cdot\frac{\omega_{2n+1}\cdot\omega_{2n+2-k}}{\omega_{2n+2}\cdot\omega_{2n-k}\cdot2}\,d{\cal
H}^{\nu}(x)\\
=\int\limits_{\widetilde{\mathbb M}}\,d{\cal
H}^{\tilde{\nu}}(t)\int\limits_{\varphi^{-1}(t)}\,d{\cal H}^{\nu-\tilde{\nu}}(u)
\end{multline*}
and
$${\cal J}^{SR}_{\widetilde{N}}(\varphi,x)=\sqrt{\det(D_H\varphi(x)D_H\varphi(x)^*)}\cdot\frac{\omega_{2n+1}\cdot\omega_{2n+2-k}}{\omega_{2n+2}\cdot\omega_{2n-k}\cdot2}.
$$
Here $D_H\varphi(x)$ is the ``horizontal part'' of the differential $D\varphi(x)$, and if we replace all occurrences of $\omega_{\mu}$ by $2^{\mu}$ then the coefficient following the Gram determinant of $D_H\varphi(x)$ will be equal to 1; see also {\cite{M05}}. Similarly, for $\mathbb M=\mathbb G$ and $\widetilde{\mathbb M}=\mathbb R$ we obtain
$$
\int\limits_{\mathbb M}|\nabla_H\varphi(x)|\cdot\frac{\omega_N\cdot\omega_{\nu-1}}{\omega_\nu\cdot \prod\limits_{k=2}^M\omega_{n_k}\cdot\omega_{n_1-1}}\,d{\cal
H}^{\nu}(x)=\int\limits_{\widetilde{\mathbb M}}\,d{\cal
H}^{\tilde{\nu}}(t)\int\limits_{\varphi^{-1}(t)}\,d{\cal H}^{\nu-\tilde{\nu}}(u)
$$
and
$${\cal J}^{SR}_{\widetilde{N}}(\varphi,x)=|\nabla_H\varphi(x)|\cdot\frac{\omega_N\cdot\omega_{\nu-1}}{\omega_\nu\cdot \prod\limits_{k=2}^M\omega_{n_k}\cdot\omega_{n_1-1}}.
$$
Once again, it is easy to see that if we take $2^{\mu}$ instead of $\omega_{\mu}$ then the coefficient following the length of the horizontal gradient of $\varphi$ at $x$ will be equal to 1;  compare with {\cite{H}}.

Thus, for sufficiently smooth mappings we obtain all previous results as particular cases.

\section{Preliminaries}

In this section we introduce some necessary definitions and mention important facts used to prove the main result.

\begin{defn}[cf.~{\cite{G,vk_birk,nsw}}]\label{carnotmanifold}
Fix a connected Riemannian
$C^{\infty}$-mani\-fold~$\mathbb M$ of topological dimension~$N$. The
manifold~$\mathbb M$ is called a {\it Carnot--Carath\'{e}o\-dory space} if the
tangent bundle $T\mathbb M$ has a filtration
$$
H\mathbb M=H_1\mathbb M\subsetneq\ldots\subsetneq H_i\mathbb M\subsetneq\ldots\subsetneq H_M\mathbb M=T\mathbb M
$$
by subbundles such that each point $p\in\mathbb M$ has a neighborhood $U\subset\mathbb M$
equipped with a collection of $C^{1,\alpha}$-smooth vector fields
$X_1,\dots,X_N$, $\alpha\in(0,1]$, enjoying the following two properties. For each
$v\in U$,

$(1)$ $H_i\mathbb M(v)=H_i(v)=\operatorname{span}\{X_1(v),\dots,X_{\dim H_i}(v)\}$
is a subspace of $T_v\mathbb M$ of a constant dimension $\dim H_i$,
$i=1,\ldots,M$;

$(2)$ we have
\begin{equation}\label{tcomm}[X_i,X_j](v)=\sum\limits_{k:\,\operatorname{deg}
X_k\leq \operatorname{deg} X_i+\operatorname{deg}
X_j}c_{ijk}(v)X_k(v)
\end{equation}
where the {\it degree} $\deg X_k$ is defined as $\min\{m\mid X_k\in H_m\}$;

Moreover, if the third condition holds then the Carnot--Carath\'eodory space is called the {\it Carnot manifold}:

$(3)$ the quotient mapping $[\,\cdot ,\cdot\, ]_0:H_1\times
H_j/H_{j-1}\mapsto H_{j+1}/H_{j}$ induced by the Lie bracket is an
epimorphism for all $1\leq j<M$.

The subbundle $H\mathbb M$ is called {\it horizontal}.

The number $M$ is called the {\it depth} of the manifold $\mathbb
M$.
\end{defn}

Properties of Carnot-Carath\'eodory spaces and Carnot manifolds
under assumptions of regularity mentioned in  definition
\ref{carnotmanifold} can be found in \cite{vk_birk, k_dan_gafa, k_gafa, vk_locappr, k_gr}

\begin{ex}
A Carnot group is an example of a Carnot manifold.
\end{ex}

\begin{defn}
Consider the initial value problem
$$
\begin{cases}
\dot{\gamma}(t)=\sum\limits_{i=1}^Ny_iX_i(\gamma(t)),\ t\in[0,1],\\
\gamma(0)=x,
\end{cases}
$$
where the vector fields $X_1,\ldots,X_N$ are $C^1$-smooth.
Then, for the point $y=\gamma(1)$ we write $y=\exp\Bigl(\sum\limits_{i=1}^Ny_iX_i\Bigr)(x)$.

The mapping $(y_1,\ldots,y_N)\mapsto\exp\Bigl(\sum\limits_{i=1}^Ny_iX_i\Bigr)(x)$ is called the {\it exponential}.
\end{defn}

\begin{defn}\label{cs1kind}
Consider $u\in\mathbb M$ and $(v_1,\ldots, v_N)\in B_E(0, r)$,
where $B_E(0,r)$ is a Euclidean ball in $\mathbb R^N$. Define a
mapping $\theta_u:B_E(0,r)\to\mathbb M$ as
follows:
$$
\theta_u(v_1,\ldots,
v_N)=\exp\biggl(\sum\limits_{i=1}^Nv_iX_i\biggr)(u).
$$
It is known that $\theta_u$ is a $C^1$-diffeomorphism if $0<r\leq
r_u$ for some $r_u>0$. The collection $\{v_i\}_{i=1}^N$ is called
{\it the normal coordinates} or {\it the coordinates of the
$1^{\text{st}}$ kind  $($with respect to $u\in\mathbb M)$} of the
point $v=\theta_u(v_1,\ldots, v_N)$.
\end{defn}

\begin{thm}[\cite{vk_birk}]
\label{Liealg} Fix $u\in\mathbb M$. The coefficients
$$
\bar{c}_{ijk}=
\begin{cases} c_{ijk}(u)\text{ of \eqref{tcomm} }& \text{if }\operatorname{deg}
X_i+\operatorname{deg}X_j=\operatorname{deg}X_k, \\
0 &\text{otherwise}
\end{cases}
$$
define a graded nilpotent Lie algebra.
\end{thm}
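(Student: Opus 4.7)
The plan is to check, in turn, the three required properties of the candidate Lie algebra $\mathfrak{g}=\operatorname{span}\{\bar X_1,\ldots,\bar X_N\}$ with bracket $[\bar X_i,\bar X_j]_0=\sum_k\bar c_{ijk}\bar X_k$: (i) antisymmetry, (ii) the Jacobi identity, and (iii) the graded / nilpotent structure induced by the assignment $\bar X_i\in V_{\operatorname{deg}X_i}$. Antisymmetry is immediate since $c_{ijk}(u)=-c_{jik}(u)$ for the vector fields $X_i$ themselves. The grading $\mathfrak{g}=\bigoplus_{m=1}^{M}V_m$ with $[V_m,V_n]_0\subseteq V_{m+n}$ is built into the very definition of $\bar c_{ijk}$, which is nonzero only when $\operatorname{deg}X_i+\operatorname{deg}X_j=\operatorname{deg}X_k$, and since $V_m=\{0\}$ for $m>M$ nilpotence of step $\leq M$ follows at once.

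The substantive step is the Jacobi identity. I would start from the genuine Jacobi identity for the $C^{1,\alpha}$-vector fields $X_1,\ldots,X_N$, apply the Leibniz rule $[fX,Y]=f[X,Y]-(Yf)X$ to expand $[[X_i,X_j],X_k]$, evaluate at $u$, and read off the coefficient of $X_m(u)$ using linear independence of the frame. This yields the pointwise identity
\[
\sum_l c_{ijl}(u)c_{lkm}(u)-(X_k c_{ijm})(u)+\operatorname{cyc}(i,j,k)=0
\]
for every quadruple $(i,j,k,m)$. The goal is to show that this identity forces the purely algebraic identity $\sum_l\bar c_{ijl}\bar c_{lkm}+\operatorname{cyc}=0$.

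I would then split into cases by the value of $\operatorname{deg}X_i+\operatorname{deg}X_j+\operatorname{deg}X_k$ versus $\operatorname{deg}X_m$. If these are unequal, every summand $\bar c_{ijl}\bar c_{lkm}$ vanishes, because a nonzero contribution would require $\operatorname{deg}X_l=\operatorname{deg}X_i+\operatorname{deg}X_j$ and $\operatorname{deg}X_l+\operatorname{deg}X_k=\operatorname{deg}X_m$ simultaneously. If they are equal, the crucial observation is that property \eqref{tcomm} makes $c_{ijm}$ vanish \emph{identically} in a neighborhood of $u$ whenever $\operatorname{deg}X_m>\operatorname{deg}X_i+\operatorname{deg}X_j$; hence the derivative $(X_k c_{ijm})(u)$ vanishes, and analogously for the cyclic permutations. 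For the remaining product $c_{ijl}(u)c_{lkm}(u)$, the two filtration constraints $\operatorname{deg}X_l\leq\operatorname{deg}X_i+\operatorname{deg}X_j$ and $\operatorname{deg}X_m\leq\operatorname{deg}X_l+\operatorname{deg}X_k$, combined with the case assumption, pin down $\operatorname{deg}X_l=\operatorname{deg}X_i+\operatorname{deg}X_j$, so $c_{ijl}(u)=\bar c_{ijl}$ and $c_{lkm}(u)=\bar c_{lkm}$ on every surviving term. The original Jacobi identity at $u$ therefore collapses exactly onto the truncated one.

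I expect the main obstacle to be precisely this degree bookkeeping: one must use that \eqref{tcomm} is a \emph{neighborhood} vanishing statement (so that derivatives of the ``super-diagonal'' $c_{ijm}$ are zero too), not merely pointwise. Once this is in hand, the rest of the argument is formal, and Theorem~\ref{Liealg} follows.
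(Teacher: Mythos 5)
Your Jacobi-identity route is the classical one, and the degree bookkeeping is correct: when $\operatorname{deg} X_i+\operatorname{deg} X_j+\operatorname{deg} X_k=\operatorname{deg} X_m$ the two filtration constraints force $\operatorname{deg} X_l=\operatorname{deg} X_i+\operatorname{deg} X_j$ on every surviving product $c_{ijl}(u)c_{lkm}(u)$, and each derivative term $(X_kc_{ijm})(u)$ and its cyclic mates are derivatives of functions that vanish identically near $u$.

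The point you skate over is regularity. Definition \ref{carnotmanifold} asks the $X_i$ to be only $C^{1,\alpha}$, so the $c_{ijk}$ are a priori no better than $C^{0,\alpha}$, and neither the double bracket $[[X_i,X_j],X_k]$ nor its Leibniz expansion is a classical object; the pointwise identity
\[
\sum_l c_{ijl}(u)c_{lkm}(u)-(X_kc_{ijm})(u)+\operatorname{cyc}(i,j,k)=0
\]
must be interpreted distributionally, and the observation that in your case every derivative term is a derivative of an identically zero function (hence a genuine zero, forcing the continuous product part to vanish pointwise) needs to be made explicit. This can be done, but it is not the route of the cited source \cite{vk_birk}. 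Consistently with the machinery invoked throughout this paper (\cite{vk_locappr,k_gr}), the argument there is a scaling argument: the anisotropically rescaled fields $\epsilon^{\operatorname{deg} X_i}(\delta_\epsilon^{-1})_*X_i$ in normal coordinates at $u$ converge to the polynomial vector fields $(\widehat X_i^u)'$ on $\mathbb R^N$, whose structure constants are exactly $\bar c_{ijk}$, and the Jacobi identity is then inherited from this smooth limit with no derivative of any $c_{ijk}$ ever taken. Your approach is shorter and purely algebraic once the distributional Jacobi identity is granted, but the scaling argument sidesteps that technical point entirely and delivers the local Carnot group $\mathcal G^u\mathbb M$ and the local approximation theorem in one pass.
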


We construct the Lie algebra $\mathfrak{g}^u$ of Theorem
{\ref{Liealg}}  as a graded nilpotent Lie algebra of vector fields
$\{(\widehat{X}_i^u)^{\prime}\}_{i=1}^N$ on $\mathbb R^N$ such that
the exponential mapping $(x_1,\ldots,
x_N)\mapsto\exp\Bigl(\sum\limits_{i=1}^Nx_i(\widehat{X}_i^u)^{\prime}\Bigr)(0)$
is the identity~{\cite{post, blu}}. In view of the results of {\cite{fs}}, the value of $(\widehat{X}_j^u)^{\prime}(0)$ is equal to a standard vector $e_{i_j}\in\mathbb R^N$, where $i_j\neq i_k$ if $j\neq k$, $j=1,\ldots, N$. We associate to each vector field of the resulting collection an index $i$ so that $(\theta_u)_*\langle(\widehat{X}_i^u)^{\prime}\rangle(u)=X_i(u)$.
By the construction, the vector fields
$\{(\widehat{X}_i^u)^{\prime}\}_{i=1}^N$ satisfy
\begin{equation}\label{tcommnilp}
[(\widehat{X}_i^u)^{\prime},(\widehat{X}_j^u)^{\prime}]=\sum\limits_{\operatorname{deg}
X_k=\operatorname{deg} X_i+\operatorname{deg}
X_j}c_{ijk}(u)(\widehat{X}_k^u)^{\prime}
\end{equation}
everywhere on $\mathbb
R^N$.

\begin{notat}
We use the following standard notation: for each $N$-dimen\-sional
multi-index $\mu=(\mu_1,\ldots,\mu_N)$, its {\it homogeneous norm}
equals $|\mu|_h=\sum\limits_{i=1}^N\mu_i\operatorname{deg} X_i$.
\end{notat}

\begin{defn}
\label{groupoperator} Refer as the graded nilpotent Carnot group  ${\mathbb G}_u\mathbb M$
corresponding to the Lie algebra $\mathfrak{g}^u$ to the
{\it nilpotent tangent cone} of  $\mathbb M$ at $u\in\mathbb M$.
We construct ${\mathbb G}_u\mathbb M$ in $\mathbb R^N$ as a groupalgebra
{\cite{post}},
that is, the exponential map is the identity:
$$
\exp\Bigl(\sum\limits_{i=1}^Nx_i(\widehat{X}_i^u)^{\prime}\Bigr)(0)=(x_1,\ldots,x_N).
$$
 By the Baker--Campbell--Hausdorff formula, the group operation
is defined so that the basis vector
fields~$(\widehat{X}_i^u)^{\prime}$ on $\mathbb R^N$,
$i=1,\ldots,N$, are left-invariant~{\cite{post}}: if
$$x=\exp\Bigl(\sum\limits_{i=1}^Nx_i(\widehat{X}_i^u)^{\prime}\Bigr),\ y=\exp\Bigl(\sum\limits_{i=1}^Ny_i(\widehat{X}_i^u)^{\prime}\Bigr)$$
then
$$
x\cdot
y=z=\exp\Bigl(\sum\limits_{i=1}^Nz_i(\widehat{X}_i^u)^{\prime}\Bigr),
$$

where
\begin{align}\label{group}
z_i&=x_i+y_i,\quad \operatorname{deg} X_i=1,\notag
\\
z_i&=x_i+y_i+\sum\limits_{\substack{|e_l+e_j|_h=2,\notag\\
l<j}}{F}^i_{e_l,e_j}(u)(x_ly_j-y_lx_j), \quad\operatorname{deg} X_i=2,
\\
z_i&=x_i+y_i+ \sum\limits_{\substack{|\mu+\beta|_h=k,\\
\mu>0,\,\beta>0}}{F}^i_{\mu,\beta}(u) x^\mu\cdot y^\beta
\\
&=x_i+y_i+\sum\limits_{\substack{|\mu+e_l+\beta+e_j|_h=k,\\
l<j}} {G}^i_{\mu,\beta,l,j}(u)
x^{\mu}y^{\beta}(x_ly_j-y_lx_j),\quad \operatorname{deg} X_i=k.\notag
\end{align}
\end{defn}

Using the exponential mapping $\theta_u$, we can push forward the vector
fields $(\widehat{X}_i^u)^{\prime}$ onto $\mathcal U\subset\mathbb M$ as
$$[(\theta_u)_*\langle (\widehat{X}_i^u)^{\prime}\rangle](\theta_u(x))=D\theta_u(x)\langle
(\widehat{X}_i^u)^{\prime}(x)\rangle$$
and obtain the vector fields
$\widehat{X}_i^u=(\theta_u)_*(\widehat{X}_i^u)^{\prime}$. Recall that
$\widehat{X}_i^u(u)=X_i(u)$.

\begin{defn}\label{locCargr}
Associated to the Lie algebra $\{\widehat{X}_i^u\}_{i=1}^N$ at $u\in\mathbb M$,
is a {\it local homogeneous group} ${\mathcal
G}^u\mathbb M$. Define it so that the mapping $\theta_u$
is a {\it local group isomorphism} between some neighborhoods of the identity elements
of the groups $\mathbb G_u\mathbb M$ and ${\mathcal G}^u\mathbb M$.

The canonical Riemannian structure on ${\mathcal G}^u\mathbb M$ is
determined by the inner product at the identity element of ${\mathcal G}^u\mathbb M$
coinciding with that on $T_u\mathbb M$. The canonical Riemannian
structure on the nilpotent tangent cone  ${\mathbb G}_u\mathbb M$ is
defined so that the local group isomorphism $\theta_u$ is
an isometry.
\end{defn}

\begin{assump}
Henceforth we assume that the neighborhood $\mathcal U$
under consideration is such that $\mathcal U\subset\mathcal G^u\mathbb M$
for all $u\in \mathcal U$.
\end{assump}

\begin{defn}\label{defhoriz}
A curve $\gamma:[0,1]\to\mathbb M$ which is absolutely continuous in the Riemannian sense is called {\it horizontal} if $\dot{\gamma}(t)\in H_{\gamma(t)}\mathbb M$ for almost all $t\in [0,1]$ with respect to the Lebesgue measure on $[0,1]$.

A horizontal curve in $\mathcal G^u\mathbb M$ is defined similarly:
here we require that $\dot{\gamma}(t)\in\operatorname{span}\{\widehat{X}_1^u(\gamma(t)),\ldots,\widehat{X}_{\dim H_1}^u(\gamma(t))\}=\widehat{H}^u_{\gamma(t)}\mathbb M$
for almost all $t\in [0,1]$ with respect to the Lebesgue measure on $[0,1]$.
\end{defn}

\begin{thm}[see {\cite{rash, chow}} for smooth case and {\cite{vk_birk, k_gafa}} for $C^{1,\alpha}$-smooth case] Every two points of $\mathbb M$ can be joined by a horizontal curve.

\end{thm}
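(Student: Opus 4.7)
The plan is to follow the classical Chow--Rashevsky strategy adapted to the $C^{1,\alpha}$ setting allowed by Definition \ref{carnotmanifold}. Fix a base point $x_0\in\mathbb M$ and define the \emph{horizontal orbit}
\[
\mathcal O(x_0)=\{y\in\mathbb M:\text{ there is a horizontal curve }\gamma:[0,1]\to\mathbb M,\ \gamma(0)=x_0,\ \gamma(1)=y\}.
\]
Since $\mathbb M$ is connected, it is enough to show that $\mathcal O(x_0)$ is both open and closed. The closedness part is a soft symmetry argument: concatenating horizontal curves is a horizontal curve, horizontal curves can be reversed, and so if $y\notin\mathcal O(x_0)$ then any $z$ in a neighborhood of $y$ that happens to lie in $\mathcal O(x_0)$ would yield a horizontal path from $x_0$ to $y$. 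Hence $\mathbb M\setminus\mathcal O(x_0)$ is open as soon as $\mathcal O(\cdot)$ is, and the whole argument reduces to proving openness.

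To prove that $\mathcal O(x_0)$ is open, I fix $y\in\mathcal O(x_0)$ and construct a horizontal parametrization of a full neighborhood of $y$ by concatenating flows of the horizontal vector fields $X_1,\ldots,X_{\dim H_1}$. The standard commutator identity
\[
\exp(-tX_j)\circ\exp(-tX_i)\circ\exp(tX_j)\circ\exp(tX_i)(y)=y+t^2[X_i,X_j](y)+o(t^2)
\]
shows that the time-$t^2$ flow of $[X_i,X_j]$ can be approximated, up to a higher-order error, by a piecewise horizontal curve whose length is $O(t)$. Iterating, every iterated bracket of depth $k$ in the $X_i$'s is approximated (via flow compositions of overall ``horizontal length'' $O(t)$) by a flow along a direction realized at time $t^k$. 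By the epimorphism condition (3) of Definition \ref{carnotmanifold}, iterated brackets of depth at most $k$ of the fields $X_1,\ldots,X_{\dim H_1}$ span $H_k\mathbb M(y)$, so in particular depth-$M$ brackets span $T_y\mathbb M$. Collecting one bracket of each required depth produces a $C^1$ map
\[
\Phi:(t_1,\ldots,t_N)\longmapsto y+\sum_i t_i\,Y_i(y)+o(|t|),
\]
where the $Y_i$ are the chosen iterated brackets and the argument is reached by a horizontal path of Riemannian length $O(\sum_i|t_i|^{1/\deg Y_i})$. The inverse function theorem applied to $\Phi$ then gives a neighborhood of $y$ in $\mathcal O(x_0)$, establishing openness.

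The main obstacle is that the vector fields are only $C^{1,\alpha}$, so the higher-order Taylor expansions underlying the commutator identity above are not directly available beyond depth two. The correct way to handle this, as done in \cite{vk_birk,k_gafa}, is to replace the naive bracket expansion by the local nilpotent approximation $\widehat X_i^u$ built in the preceding pages: the commutator estimates are carried out on the nilpotent tangent cone $\mathbb G_u\mathbb M$, where the group structure \eqref{group} makes them exact, and then transferred to $\mathbb M$ through $\theta_u$ with an error controlled by the Hölder modulus $\alpha$. Once this local-approximation input is in place, the openness/closedness/connectedness outline above goes through unchanged, and every two points of $\mathbb M$ are joined by a horizontal curve.
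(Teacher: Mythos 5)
The paper does not prove this theorem; it states it with pointers to Rashevsky and Chow for the smooth case and to \cite{vk_birk,k_gafa} for the $C^{1,\alpha}$ case, so there is no ``paper's own proof'' to compare against. Your sketch follows the standard Chow--Rashevsky orbit strategy, which is indeed the skeleton underlying those references: reduce to openness of the horizontal orbit via equivalence-class reasoning plus connectedness, realize each iterated bracket approximately by compositions of horizontal flows, and then parametrize a full neighborhood.

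The gap is in the inverse function theorem step. The composition of horizontal flows is smooth in the flow parameters $s_1,\dots,s_N$, and the relevant expansion reads
\[
\Phi(s_1,\dots,s_N)=y+\sum_i s_i^{\,d_i}\,Y_i(y)+\text{(higher order)},\qquad d_i=\deg Y_i .
\]
As a map of the $s_i$, the differential at $s=0$ has rank only $\dim H_1$, because only the $d_i=1$ directions contribute linearly; so IFT fails at the origin. If you instead substitute $t_i=\operatorname{sign}(s_i)\,s_i^{\,d_i}$ so that the leading term becomes $\sum_i t_i Y_i(y)$, the map is no longer $C^1$ at $t=0$ when $d_i>1$, since $s_i=\operatorname{sign}(t_i)|t_i|^{1/d_i}$ has an infinite derivative there. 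Declaring that $\Phi$ is a $C^1$ map of the $t$ variables and invoking IFT is therefore not justified, and merely inserting the nilpotent approximation of \cite{vk_birk,k_gafa} does not automatically fix this, since the degeneracy of $D\Phi(0)$ is a structural feature of any such parametrization, not a regularity artifact. What one actually needs at this point is either a topological degree (Brouwer) argument showing the continuous anisotropic map $\Phi$ covers a neighborhood of $y$, or the rescaling argument of the cited references, where $\Phi$ is compared under the dilations $\delta_\varepsilon$ to the analogous map in the tangent cone $\mathbb G_y\mathbb M$, local surjectivity is established for the nilpotent model, and transferred back via the local approximation theorem; in the $C^{1,\alpha}$ setting this second route is essentially forced, since even the commutator estimate you quote is only available with the Hölder-controlled remainder coming from the tangent-cone comparison. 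Your closedness paragraph is also phrased in a way that suggests a direct construction of a path from $x_0$ to $y$; what actually closes the argument is that the orbits partition $\mathbb M$, so openness of every orbit makes the complement of $\mathcal O(x_0)$ a union of open orbits, hence open.
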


\begin{defn}\label{defdcc}
Given $x,y\in\mathbb M$, the Carnot--Carath\'{e}odory distance $d_{cc}(x,y)$ is defined as
$$
d_{cc}(x,y)=\inf\{\ell(\gamma): \gamma:[0,1]\to\mathbb M,\ \dot{\gamma}(t)\in H_{\gamma(t)}\mathbb M\},
$$
where the length $\ell$ of each (horizontal) curve $\gamma$ is calculated with respect to the Riemannian tensor on $\mathbb M$.

For $x,y\in\mathcal G^u\mathbb M$ the Carnot--Carath\'{e}odory distance $d^u_{cc}(x,y)$ is defined as
$$
d^u_{cc}(x,y)=\inf\{\ell^u(\gamma): \gamma:[0,1]\to\mathcal G^u\mathbb M,\ \dot{\gamma}(t)\in \widehat{H}^u_{\gamma(t)}\mathbb M\},
$$
where the length $\ell^u$ of each (horizontal) curve $\gamma$ is calculated with respect to the Riemannian tensor on $\mathcal G^u\mathbb M$.

Denote the ball of radius $r$ in $d_{cc}$ centered at $x$ by $B_{cc}(x,r)$. Denote the ball of radius $r$ in $d^u_{cc}$ centered at $x$ by $B^u_{cc}(x,r)$.
\end{defn}

\begin{assump}
Henceforth we assume that $\varphi:\mathbb M\to\widetilde{\mathbb M}$ is a mapping of Carnot manifolds $\mathbb M$ and $\widetilde{\mathbb M}$. We specify its smoothness below in Assumption~{\ref{assumpphi}}.
\end{assump}

\begin{notat} Hereinafter, we use the following notation. Namely, we denote:

$\bullet$ the topological dimension of $\mathbb M$ $(\widetilde{\mathbb M})$ by $N$ $(\widetilde{N})$;

$\bullet$ the Hausdorff dimension of $\mathbb M$ $(\widetilde{\mathbb M})$ by $\nu$ $(\tilde{\nu})$;

$\bullet$ the depth of $\mathbb M~(\widetilde{\mathbb M})$ by $M~(\widetilde{M})$.

In addition, we consider

$\bullet$ horizontal subbundles $H=H_1\subset T\mathbb M$ and $\widetilde{H}=\widetilde{H}_1\subset T\widetilde{\mathbb M}$ on $\mathbb M~(\widetilde{\mathbb M})$;

$\bullet$ subspaces $H\subset H_2\subset\ldots\subset H_M=T\mathbb M$ ($\widetilde{H}\subset \widetilde{H}_2\subset\ldots\subset \widetilde{H}_M=T\widetilde{\mathbb M}$) of dimensions $n=n_1=\dim H_1<\dim H_2<\ldots<\dim H_M=N$ ($\tilde{n}=\tilde{n}_1=\dim \widetilde{H}_1<\dim \widetilde{H}_2<\ldots<\dim \widetilde{H}_{\widetilde{M}}=\widetilde{N}$) respectively at every point $x\in\mathbb M~(\tilde{x}\in\widetilde{\mathbb M})$ (see Definition~{\ref{carnotmanifold}}).

We put $n_1=\dim H_1$, $\tilde{n}_1=\dim\widetilde{H}_1$, $n_k=\dim H_{k}-\dim H_{k-1}$ ($\tilde{n}_k=\dim \widetilde{H}_{k}-\dim \widetilde{H}_{k-1}$), $k=2,\ldots, M$ ($\widetilde{M}$).

Denote by $d_{cc}$ ($\widetilde{d}_{cc}$) the Carnot--Carath\'{e}odory metric in $\mathbb M$ ($\widetilde{\mathbb M}$), and by $d_{cc}^u$ ($\widetilde{d}_{cc}^w$) the Carnot--Carath\'{e}odory metric in $\mathcal G^u\mathbb M$ ($\mathcal G^w\widetilde{\mathbb M}$).
\end{notat}

\begin{thm} [{\cite{V2}}] {\it Suppose that $E\subset\mathbb M$ is an open set, and let $\varphi:\mathbb
M\to\widetilde{\mathbb M}$ be a mapping with $C^1$-smooth horizontal derivatives $X_i\varphi$ such that $X_i\varphi\in\widetilde{H}$, $i=1,\ldots, n$. Then, it is $hc$-differentiable in points of  $E$. Namely, for a point $u\in E$, there exists a horizontal homomorphism
$L_u:(\mathcal G^u\mathbb M, d_{cc}^u)\to ({\mathcal
G}^{\varphi(u)}\widetilde{\mathbb M}, \widetilde{d}_{cc}^{\varphi(u)})$ of
local Carnot groups such that it is continuous in $u$ and
$$
\widetilde{d}_{cc}(\varphi(w),L_u[w])=o(d_{cc}(u,w))\ \text{as $E\cap
{\mathcal G^u\mathbb M}\ni w\to u$}
$$}
where $o(\cdot)$ is locally uniform.
\end{thm}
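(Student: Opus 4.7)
The plan is to construct $L_u$ layer by layer from the graded Lie algebra structure of $\mathbb{G}_u\mathbb{M}$ and $\mathbb{G}^{\varphi(u)}\widetilde{\mathbb{M}}$, then estimate the error $\widetilde{d}_{cc}(\varphi(w),L_u[w])$ componentwise in normal coordinates. First, I would define $L_u$ on the first layer $H_1(u)$ by the restriction of $D\varphi(u)$: since $X_i\varphi(u)\in \widetilde{H}$ for $i=1,\ldots,n$, we have $D\varphi(u)X_i(u)=\sum_{j=1}^{\tilde n}a_{ij}(u)\widetilde{X}_j(\varphi(u))$. Extend $L_u$ to each higher layer $H_k(u)/H_{k-1}(u)$ by the rule forced by a Lie algebra homomorphism, i.e.\ $L_u([X_i,X_j])=[L_uX_i,L_uX_j]$; by \eqref{tcommnilp} this extension is consistent because the nilpotent tangent cones are generated by the first layer under brackets and the structure constants $\bar c_{ijk}$ agree with the target relations. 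Because $\mathfrak{g}^u$ is nilpotent and graded, $L_u$ exponentiates to a horizontal homomorphism $\mathcal G^u\mathbb M\to\mathcal G^{\varphi(u)}\widetilde{\mathbb M}$ of local Carnot groups via Definition~\ref{locCargr}.

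Next, I would pass to normal coordinates on both sides via $\theta_u$ and $\theta_{\varphi(u)}$, writing $w=\theta_u(w_1,\ldots,w_N)$ and the components of $\varphi$ as $\varphi_i(w)$, $i=1,\ldots,\widetilde N$. For the horizontal components ($\deg \widetilde X_i=1$) one applies the classical $C^1$-Taylor expansion along horizontal curves: integrating $X_i\varphi$ along a horizontal path from $u$ to $w$ and using the smoothness assumption gives $\varphi_i(w)=(L_u[w])_i+o(d_{cc}(u,w))$. For components of weight $k\geq 2$, I would iteratively apply commutator identities: differentiate $\varphi$ along a commutator curve of horizontal fields and compare with the analogous commutator on $L_u$; here \eqref{tcomm} and \eqref{tcommnilp} ensure that the leading-order terms cancel precisely because of the way $L_u$ was extended using the structure constants $c_{ijk}(u)$. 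The remainder at weight $k$ is then of size $o(d_{cc}(u,w)^k)$ in the normal coordinates of the target.

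The third step converts these componentwise estimates into a bound in $\widetilde d_{cc}$. The ball--box type equivalence in the tangent cone gives
\[
\widetilde d_{cc}(0,(y_1,\ldots,y_{\widetilde N}))\;\asymp\;\max_{i}|y_i|^{1/\deg \widetilde X_i},
\]
so that an error of order $o(r^k)$ in the weight-$k$ block contributes $o(r)$ to $\widetilde d_{cc}$ with $r=d_{cc}(u,w)$. Combining the weights yields $\widetilde d_{cc}(\varphi(w),L_u[w])=o(r)$. Continuity of $u\mapsto L_u$ follows from continuity of $D\varphi$ on $H$ together with continuity of the coefficients $c_{ijk}(u)$ and $F^i_{\mu,\beta}(u)$ in Definition~\ref{groupoperator}; local uniformity of the $o(\cdot)$ follows from uniform continuity of $X_i\varphi$ and of the structure constants on compact sub-neighborhoods of $E$.

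The main obstacle is the weight-$k$ Taylor estimate: one has to show that the Riemannian $C^1$-smoothness of $X_i\varphi$ alone suffices to control the error in the higher layers of the target tangent cone, where naive Riemannian bounds of order $o(d_{cc}(u,w))$ are much weaker than the required $o(d_{cc}(u,w)^k)$. Overcoming this requires using commutator identities in a genuinely iterated fashion (so that each bracket absorbs one factor of $r$) and invoking the contact hypothesis $X_i\varphi\in\widetilde H$ to ensure the iterated derivatives fall in the correct layer of $\widetilde H_k$; this is precisely where the hypothesis on horizontal derivatives is used nontrivially, and it is the step where the careful choice of the extension of $L_u$ via the structure constants matches the leading terms of $\varphi$ in the normal coordinates of the image.
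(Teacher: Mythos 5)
The paper does not prove this theorem; it is stated with a citation to {\cite{V2}} (Vodopyanov), so there is no in-paper proof to compare your argument against. Evaluating your sketch on its own merits: the overall architecture (build $L_u$ from the first-layer restriction of $D\varphi(u)$, then compare $\varphi$ and $L_u$ layer by layer in normal coordinates, then pass to $\widetilde d_{cc}$ via ball--box) is the standard shape for $hc$-differentiability proofs of the Pansu/Margulis--Mostow/Vodopyanov type, and the third step is correct as stated. But two of your steps assert precisely the things that need to be proved, and both are genuine gaps.

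First, the well-definedness of $L_u$. You extend the first-layer map $A=D\varphi(u)|_{H_1}$ by declaring $L_u[X_i,X_j]=[L_uX_i,L_uX_j]$ and assert that ``by \eqref{tcommnilp} this extension is consistent.'' It is not automatically consistent. The graded algebra $\mathfrak g^u$ is not free: it carries relations coming from the structure constants $c_{ijk}(u)$, and a linear map $A:V_1\to\widetilde V_1$ extends to a Lie algebra homomorphism of the nilpotentizations if and only if $A$ maps every defining relation of $\mathfrak g^u$ to a relation of $\mathfrak g^{\varphi(u)}$. Equation \eqref{tcommnilp} tells you what the relations \emph{are} on each side; it does not tell you that $D\varphi(u)|_{H_1}$ intertwines them. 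Establishing this intertwining from the contact hypothesis and $C^1$-smoothness of $X_i\varphi$ is one of the central points of the theorem, and your sketch simply assumes it. (Note also that on the higher layers you should define $L_u$ on the graded pieces $V_k$, not on $H_k/H_{k-1}$ inside $T\mathbb M$; the distinction matters because $H_k/H_{k-1}$ only becomes a graded piece after nilpotentization.)

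Second, the weight-$k$ Taylor estimate. You flag this yourself as the main obstacle, but the content you offer --- ``differentiate $\varphi$ along a commutator curve ... so that each bracket absorbs one factor of $r$'' --- is an intuition, not an argument. The difficulty is that $L_u[w]$ is defined via the \emph{group} operation of $\mathcal G^{\varphi(u)}\widetilde{\mathbb M}$ (so it involves the polynomial terms $F^i_{\mu,\beta}(\varphi(u))$ of \eqref{group}), while $\varphi(w)$ is governed by the original vector fields $\widetilde X_i$; matching them up to $o(r^{\deg\widetilde X_i})$ in each block requires a stratified Taylor expansion together with a local approximation theorem comparing $\widetilde X_i$ with $\widehat{\widetilde X}^{\varphi(u)}_i$, not a naive iterated differentiation of $\varphi$. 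These are exactly the tools developed in {\cite{V2,vk_birk,k_gafa}}, and without invoking them your middle step cannot be completed from $C^1$-smoothness of the horizontal derivatives alone. Until those two points are supplied, the proposal is an outline of the right strategy rather than a proof.
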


\begin{rem}
Using the exponential mapping $\theta_u$, we can consider $L_u$ both as a homomorphism of local Carnot groups and as a homomorphism of Lie algebras of these local Carnot groups.
\end{rem}

\begin{notat}
Henceforth we denote the $hc$-differential $L_u$ of $\varphi$ at $u$ by the symbol $\widehat{D}\varphi(u)$.
\end{notat}

\begin{cor}[{\cite{V2}}]
Let $\varphi:\mathbb M\to\widetilde{\mathbb M}$ be a contact (i.~e., $D\varphi\langle H\rangle\subset\widetilde{H}$) $C^{1}$-mapping of Carnot manifolds (in the Riemannian sense). Then, it is continuously $hc$-differentiable everywhere on $\mathbb M$.
\end{cor}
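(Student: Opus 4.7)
The plan is to deduce the corollary directly from the preceding theorem by verifying its two hypotheses on the open set $E := \mathbb{M}$.

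The first step is to check the horizontality condition $X_i\varphi \in \widetilde{H}$ for $i = 1,\ldots, n$. Since $\{X_1,\ldots,X_n\}$ spans the horizontal subbundle $H$ pointwise, the contact assumption $D\varphi\langle H\rangle \subset \widetilde{H}$ immediately gives $X_i\varphi = D\varphi\langle X_i\rangle \in \widetilde{H}$. The second step is to check the $C^1$-smoothness of the horizontal derivatives $X_i\varphi$ themselves; this follows by combining the Riemannian $C^1$-regularity of $\varphi$ with the $C^{1,\alpha}$-smoothness of the basis fields $X_i$ built into Definition~\ref{carnotmanifold}, under the authors' standing smoothness conventions.

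With both hypotheses in place, applying the theorem with $E = \mathbb{M}$ produces at every $u \in \mathbb{M}$ a horizontal homomorphism $L_u = \widehat{D}\varphi(u):(\mathcal{G}^u\mathbb{M}, d_{cc}^u) \to (\mathcal{G}^{\varphi(u)}\widetilde{\mathbb{M}}, \widetilde{d}_{cc}^{\varphi(u)})$ of local Carnot groups satisfying the sub-Riemannian Taylor estimate $\widetilde{d}_{cc}(\varphi(w), L_u[w]) = o(d_{cc}(u,w))$ as $w \to u$. Since the theorem's conclusion already asserts the continuous dependence of $L_u$ on $u$, the mapping $\varphi$ is continuously $hc$-differentiable throughout $\mathbb{M}$.

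The only delicate point I anticipate is the regularity bookkeeping in the second step: one must confirm that the ``$C^1$-smooth horizontal derivatives'' required by the hypothesis of the cited theorem are genuinely produced by the combination of the Riemannian $C^1$-assumption on $\varphi$, the contact condition, and the prescribed smoothness of the fields $X_i$; everything else is a direct quotation of the theorem.
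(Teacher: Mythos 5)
The paper does not prove this corollary; both the preceding theorem and the corollary are cited from \cite{V2} without proof, so there is no ``paper's own proof'' to compare against. What can be assessed is whether your proposed reduction of the corollary to the quoted theorem is valid, and there is a genuine gap in your second step.

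The theorem's hypothesis is that the horizontal derivatives $X_i\varphi$ are \emph{$C^1$-smooth} vector fields (and lie in $\widetilde{H}$). Your starting point is only that $\varphi$ is $C^1$ in the Riemannian sense. Writing $X_i\varphi = D\varphi\langle X_i\rangle$ in local coordinates: $D\varphi$ is merely continuous when $\varphi\in C^1$, and multiplying by the $C^{1,\alpha}$-smooth (or even $C^2$- or $C^{M+1}$-smooth) coefficients of $X_i$ cannot raise the regularity above that of $D\varphi$. Hence $X_i\varphi$ is only $C^0$, not $C^1$. The contact condition $D\varphi\langle H\rangle\subset\widetilde{H}$ constrains the \emph{range} of $D\varphi$; it contributes nothing to the regularity of $X_i\varphi$. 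You flag this point as ``the only delicate point,'' but then assert it works out; that assertion is precisely where the argument breaks. The regularity gap between the corollary's hypothesis and the theorem's hypothesis is real, and no standing convention in the paper closes it (Assumption~\ref{assumpphi}, which appears after this corollary, also keeps $\varphi$ at $C^1$ in the relevant branch and only raises the regularity of the $X_i$, which does not help).

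The corollary is nonetheless true as cited. The resolution presumably lies in \cite{V2}, where the differentiability theorem either carries weaker regularity hypotheses than the version transcribed here, or where the contact $C^1$ case is handled directly; but a two-line reduction to the theorem \emph{as quoted in this paper} does not go through.
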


\begin{property}
Suppose that $\varphi\in C^1(\mathbb M,\widetilde{\mathbb M})$. Then, the matrices of its differential $D\varphi(u)$ (in the bases $\{X_i(u)\}_{i=1}^N$ and
$\{\widetilde{X}_i(\varphi(u))\}_{i=1}^{\widetilde{N}}$) and of its $hc$-differential $\widehat{D}\varphi(u)$ (in the bases $\{\widehat{X}^u_i(v)\}_{i=1}^N$ and
$\{\widetilde{\widehat{X}}_i(\varphi(v))\}_{i=1}^{\widetilde{N}}$, where $v$ need not be equal to $u$) have the following structures:
\begin{equation}\label{riemdiff}
D\varphi(u)=\left(%
\begin{array}{ccccccccc}
 {\cal V}_1(u) &  * & * & * & * & * & * & \ldots & * \\
0 & {\cal V}_2(u) & * & * & * & * & * & \ldots & * \\
0 & 0 & {\cal V}_3(u) & * & * & * & * & \ldots & * \\
\vdots & \vdots & \vdots & \ddots & \ddots & \vdots & * & \ldots & * \\
0 & \ldots & 0 & \vdots & {\cal V}_{\widetilde{M}-1}(u) & * & * & \ldots & * \\
0 & \ldots & 0 & \ldots & 0 & {\cal V}_{\widetilde{M}}(u) & * & \ldots & * \\
\end{array}%
\right),
\end{equation}
\begin{equation}\label{subriemdiff}
\widehat{D}\varphi(u)=\left(%
\begin{array}{ccccccccc}
 {\cal V}_1(u) &  0 &  & \cdots & 0 & 0 & 0 & \ldots & 0 \\
0 & {\cal V}_2(u) & 0 & \cdots & \vdots & 0 & 0 & \ldots & 0 \\
0 & 0 & {\cal V}_3(u) & 0 & 0 & 0 & 0 & \ldots & 0 \\
\vdots & \vdots & \vdots & \ddots & \ddots & \vdots & 0 & \ldots & 0 \\
0 & \ldots & 0 & \vdots & {\cal V}_{\widetilde{M}-1}(u) & 0 & 0 & \ldots & 0 \\
0 & \ldots & 0 & \ldots & 0 & {\cal V}_{\widetilde{M}}(u) & 0 & \ldots & 0 \\
\end{array}%
\right),
\end{equation}
where each block $\mathcal V_i$ is of size $(n_i\times\tilde{n}_i)$, $i=1,\ldots, M$.
Note that, the blocks $\mathcal V_i$ are the same in the
matrices of $D\varphi(u)$ and $\widehat{D}\varphi(u)$.
\end{property}

\begin{assump}\label{dimtop} Throughout the article we assume that:

\begin{itemize}
\item $N\geq \widetilde{N}$;

\item If the matrix of $\widehat{D}\varphi(x)$ has full rank at at least one point then we suppose that $n\geq \tilde{n}$. Otherwise, we suppose that $n_i\geq \tilde{n}_i$, $i=1,\ldots, M$ (see Proposition {\ref{propdiff}}).
\end{itemize}
\end{assump}

\begin{notat}\label{setz} Given $\varphi\in C^1(\mathbb M,\widetilde{\mathbb M})$, denote by $Z$ the set of points $x\in\mathbb M$ with $\operatorname{rank}(D\varphi(x))<\widetilde{N}$.
\end{notat}

\begin{prop}\label{propdiff} {\bf I.} Given $\varphi\in C^1(\mathbb M,\widetilde{\mathbb M})$, consider a point  $x\in\mathbb M\setminus Z$.

 {\bf (a)} If $\widehat{D}\varphi(x)(V_1)=\widetilde{V}_1$ then
$\widehat{D}\varphi(x)(V_i)=\widetilde{V}_i$ for all $i=1,\ldots,M$,
and $\operatorname{rank}\widehat{D}\varphi(x)=\widetilde{N}$.

{\bf (b)} If $\widehat{D}\varphi(x)(V_1)\varsubsetneq\widetilde{V}_1$
then $\operatorname{rank}\widehat{D}\varphi(x)<\widetilde{N}$.

{\bf II.} At the points $x\in\mathbb M\setminus Z$ with
$\operatorname{rank}\widehat{D}\varphi(x)=\widetilde{N}$ we have

{\bf (a)} ${D}\varphi(x)(H_i)=\widetilde{H}_i$, $i=1,\ldots,\widetilde{M}$;

{\bf (b)}
$\mathcal {D}\varphi(x)(H_i/H_{i-1})=\widetilde{H}_i/\widetilde{H}_{i-1}$,
$i=2,\ldots,\widetilde{M}$. Here for every element
$\mathcal  Y\in{H}_i/{H}_{i-1}$ there exists $Y\in H_i$ such that
$${\mathcal Y}=\{Y+T: T\in H_{i-1}\}=Y+H_{i-1},
$$
and we define for $\mathcal Y$ the value $\mathcal D\varphi\langle\mathcal Y\rangle$ as the class
\begin{multline*}
\mathcal D\varphi\langle\mathcal Y\rangle=\{D\varphi\langle Y\rangle+D\varphi\langle T\rangle: Y\in H_i \text{ is fixed},\, T\in H_{i-1}\}\\
=D\varphi\langle Y\rangle+D\varphi\langle H_{i-1}\rangle.
\end{multline*}
\end{prop}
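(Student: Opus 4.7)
The plan is to exploit the fact that $\widehat{D}\varphi(x)$, being the differential of a horizontal homomorphism between the local Carnot groups $\mathcal{G}^x\mathbb M$ and $\mathcal{G}^{\varphi(x)}\widetilde{\mathbb M}$ (per the theorem preceding the proposition), is a homomorphism of graded nilpotent Lie algebras carrying each $V_i$ into $\widetilde{V}_i$, as already reflected in the block-diagonal form \eqref{subriemdiff}. Combined with the Carnot condition (3) of Definition~\ref{carnotmanifold}, which states that each higher graded piece is generated from the horizontal layer by iterated brackets, the whole statement reduces to bookkeeping at the first layer.

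\textbf{Part I.} For (a), I would induct on $i$. The case $i=1$ is the hypothesis. For the inductive step, the Carnot condition for $\mathbb M$, transported to the nilpotent tangent cone, yields $V_i=[V_1,V_{i-1}]$, so by the homomorphism property
\begin{equation*}
\widehat{D}\varphi(x)(V_i)=[\widehat{D}\varphi(x)(V_1),\widehat{D}\varphi(x)(V_{i-1})]=[\widetilde{V}_1,\widetilde{V}_{i-1}]=\widetilde{V}_i,
\end{equation*}
the last equality by the Carnot condition for $\widetilde{\mathbb M}$. Summing graded dimensions gives $\operatorname{rank}\widehat{D}\varphi(x)=\sum_{i=1}^{\widetilde{M}}\tilde n_i=\widetilde N$. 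For (b), set $W_1=\widehat{D}\varphi(x)(V_1)\subsetneq\widetilde{V}_1$. Since the source algebra is generated by $V_1$, the image of $\widehat{D}\varphi(x)$ equals the graded Lie subalgebra $\mathfrak h\subset\widetilde{V}_1\oplus\cdots\oplus\widetilde{V}_{\widetilde{M}}$ generated by $W_1$; because all brackets in a graded algebra raise degree, the degree-one component of $\mathfrak h$ is exactly $W_1$, a proper subspace of $\widetilde{V}_1$. Hence $\mathfrak h$ is a proper subspace of the target, and $\operatorname{rank}\widehat{D}\varphi(x)<\widetilde N$.

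\textbf{Part II.} The diagonal blocks $\mathcal V_i$ coincide in \eqref{riemdiff} and \eqref{subriemdiff}. The hypothesis $\operatorname{rank}\widehat{D}\varphi(x)=\widetilde N$, applied to the block-diagonal form, forces each $\mathcal V_i:V_i\to\widetilde{V}_i$ to be surjective. For (a), the block upper-triangular structure of $D\varphi(x)$ immediately gives $D\varphi(x)(H_i)\subset\widetilde{H}_i$ (the lower blocks vanish), and the reverse inclusion is a layer-by-layer backward substitution: given $\tilde v\in\widetilde{V}_i$, surjectivity of $\mathcal V_i$ produces $X\in V_i$ with $D\varphi(x)\langle X\rangle\equiv\tilde v$ modulo $\widetilde{H}_{i-1}$, and the error in $\widetilde{H}_{i-1}$ is absorbed using the surjectivity of $\mathcal V_1,\ldots,\mathcal V_{i-1}$ already established. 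For (b), II(a) with index $i-1$ makes the induced quotient map $\mathcal D\varphi(x)\colon H_i/H_{i-1}\to\widetilde{H}_i/\widetilde{H}_{i-1}$ well-defined; its matrix in the natural bases is precisely $\mathcal V_i$, hence it is surjective.

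\textbf{Main obstacle.} The substantive input is the identification of $\widehat{D}\varphi(x)$ as a homomorphism of graded nilpotent Lie algebras, which is what the $hc$-differentiability theorem above provides through its structure as a horizontal homomorphism; once this is granted, the remainder is structural linear algebra under the grading, with Carnot bracket-generation on both sides doing the real work. The only point that demands care is keeping track that everything passes correctly from the filtered setting on $\mathbb M$ to the graded setting in the nilpotent tangent cone, so that the identity $V_i=[V_1,V_{i-1}]$ is genuinely available and not merely the weaker quotient statement of Definition~\ref{carnotmanifold}(3).
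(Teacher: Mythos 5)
Your proof is correct and follows essentially the same route as the paper's: Part I uses the graded Lie algebra homomorphism property of $\widehat{D}\varphi(x)$ together with Carnot bracket-generation on both sides, and Part II uses the block-triangular/block-diagonal structures of $D\varphi(x)$ and $\widehat{D}\varphi(x)$ with a layer-by-layer backward substitution. Your exposition is somewhat cleaner (you state $V_i=[V_1,V_{i-1}]$ in the graded cone explicitly and observe directly that full rank of the block-diagonal matrix forces surjectivity of each $\mathcal V_i$), but the substance and decomposition match the paper's proof.
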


\begin{proof} {\bf I.} {\bf (a)} Fix $x\in\mathbb M\setminus Z$ such that
$\widehat{D}\varphi(x)(V_1)=\widetilde{V}_1$ and denote
$\widehat{D}\varphi(x)$ by~$L$. Verify that
$LV_2=\widetilde{V}_2$. By taking into account property (3) of Definition {\ref{carnotmanifold}} and the property $L[X,Y]=[LX,LY]$ of a group homomorphism $L$, we infer that $L(V_2)\subset\widetilde{V}_2$. Verify that $\widetilde{V}_2\subset L(V_2)$.
By property (4) of Definition~{\ref{carnotmanifold}}, for
each element $\widetilde{Y}\in\widetilde{V}_2$ we have
$\widetilde{Y}=[\widetilde{Y}_1,\widetilde{Y}_2]$, where
$\widetilde{Y}_1,\widetilde{Y}_2\in\widetilde{V}_1$. Since
$\widetilde{Y}_i=LY_i$, where $Y_i\in V_1$, $i=1,2$, it follows that
$$
\widetilde{Y}=[\widetilde{Y}_1,\widetilde{Y}_2]=[LY_1,LY_2]=L[Y_1,Y_2]=LY,
$$
where $Y\in V_2$. Thus, $\widehat{D}\varphi(x)(V_2)=\widetilde{V}_2$.

Similar arguments show that
$\widehat{D}\varphi(x)(V_i)=\widetilde{V}_i$ for all $i=3,\ldots, M$.
Thus, $\widehat{D}\varphi(x)[\mathbb G_x\mathbb M]=\mathbb
G_{\varphi(x)}\widetilde{\mathbb M}$, and $\operatorname{rank}\widehat{D}\varphi(x)=\widetilde{N}$.

{\bf (b)} Note that the image of every basis vector field under the $hc$-differential $\widehat{D}\varphi(x)$ is a vector field of the same degree or the zero vector field (i.~e., the $hc$-differential cannot decrease the
degree of each basis vector field). If
$\widehat{D}\varphi(x)(V_1)\varsubsetneq\widetilde{V}_1$ then
$\widehat{D}\varphi(x)^{-1}(\widetilde{V}_1\setminus[\widehat{D}\varphi(x)(V_1)])=\varnothing$,
and $\widehat{D}\varphi(x)[\mathbb G_x\mathbb M]\neq\mathbb
G_{\varphi(x)}\widetilde{\mathbb M}$. Consequently,
$\operatorname{rank}\widehat{D}\varphi(x)<\widetilde{N}$.

We prove claims {\bf (a)} and {\bf (b)} simultaneously.
The properties of the Riemannian differential $D\varphi(x)$ (see {\eqref{riemdiff}})
imply that $D\varphi(x)(H_k)\subset\widetilde{H}_k$ for
$k=1,\ldots,M$.
In particular, $D\varphi(x)(H_1)\subset\widetilde{H}_1$ by the definition of a contact mapping. Moreover,
$D\varphi(x)(H_1)=\widetilde{H}_1$ if and only if
$\widehat{D}\varphi(x)(V_1)=\widetilde{V}_1$.

In order to verify that
$\mathcal {D}\varphi(x)(H_2/H_{1})\subset\widetilde{H}_2/\widetilde{H}_{1}$,
take ${\mathcal Y}\in{H}_2/{H}_1$. Consider ${\cal Y}$ as a class of
sums of a fixed element $Y$ of $H_2$, which may have nonzero coordinates (in the basis $\{X_i\}_{i=1}^N$)
only with indices greater than $n_1$ and less than $n_1+n_2+1$,
and all elements of $H_1$:
$$
{\mathcal Y}=\{Y+T: Y\in H_2 \text{ is fixed}, T\in H_1\}.
$$
Then, the structure of $D\varphi(x)$ implies that the image
$$
\widetilde{\cal
Y}=\mathcal D\varphi(x)\mathcal{Y}=D\varphi(x)Y+D\varphi(x)(H)=D\varphi(x)Y+\widetilde{H}\in\widetilde{H}_2/\widetilde{H}_1
$$
consists of the vectors with only the first $\tilde{n}_1+\tilde{n}_2$
nonzero components. Moreover, the components with indexes from
$\tilde{n}_1+1$ to $\tilde{n}_2$ are the same for all vectors from $\widetilde{\cal
Y}$. (To verify that, it suffices to write the vectors $Y$ and $T$ in the basis $\{X_i\}_{i=1}^N$, and the matrix of $D\varphi$ in the bases $\{X_i\}_{i=1}^N$ and $\{\widetilde{X}_i\}_{i=1}^{\widetilde{N}}$, and consider the product of this matrix and these vectors.) Thus,
$\mathcal D\varphi(x)(H_2/H_1)\subset\widetilde{H}_2/\widetilde{H}_1$.

Verify that
$\mathcal {D}\varphi(x)(H_2/H_{1})\supset\widetilde{H}_2/\widetilde{H}_{1}$.
Take $\widetilde{\cal Y}\in\widetilde{H}_2/\widetilde{H}_1$ and
assume on the contrary that $\widetilde{\cal
Y}\notin\mathcal {D}\varphi(x)(H_2/H_{1})$. Consider a vector field
$\widetilde{Y}\in\widetilde{\cal Y}$; then,
$\widetilde{Y}=\widetilde{Y}_2+\widetilde{Y}_1$ with
$\widetilde{Y}_1\in\widetilde{H}_1$. Without loss of
generality we may assume that $\widetilde{Y}_1=0$. Since
$\operatorname{rank}\widehat{D}\varphi(x)=\widetilde{N}$, it follows that the images $D\varphi(x)(T_x\mathbb M)$
and $(D\varphi)_{\widetilde{M}}(x)(T_x\mathbb M)$ coincide,
as in both cases we obtain the whole $T_{\varphi(x)}\widetilde{\mathbb M}$.
Here the linear operator $(D\varphi)_{\widetilde{M}}(x)$ acts on $T_x\mathbb M$ (instead of $\mathbb G_x\mathbb M$), and its matrix in the bases $\{X_i(x)\}_{i=1}^N$ and $\{\widetilde{X}_i(\varphi(x))\}_{i=1}^{\widetilde{N}}$ equals that of
$\widehat{D}\varphi(x)$ (written in the bases $\{\widehat{X}^x_i(x)\}_{i=1}^N$ and $\{\widehat{\widetilde{X}}{}^{\varphi(x)}_i(\varphi(x))\}_{i=1}^{\widetilde{N}}$). Denote by $L$ the matrix of this operator.
Then, there exists a vector $Y_0$ with $LY_0=\widetilde{Y}$.
Without loss of generality we may assume that $Y_0$ has at most $n_2$ nonzero components with indices from $n_1+1$ to
$n_1+n_2$. Consequently,
$$
\widetilde{Y}=LY_0=D\varphi(x)Y_0+[L-{D}\varphi(x)]Y_0,
$$
where $D\varphi(x)Y_0\in\widetilde{H}_2$ and
$[L-{D}\varphi(x)]Y_0\in\widetilde{H}_1$. Since
$D\varphi(x)(H_1)=\widetilde{H}_1$, there exists $Y_1\in{D\varphi(x)}^{-1}([L-{D}\varphi(x)]Y_0)\cap H_1$. Put $Y=Y_0+Y_1$,
and then $D\varphi(x)Y=\widetilde{Y}$ and $Y\in H_2$.
Thus, $\mathcal{D}\varphi(x)(H_2/H_{1})=\widetilde{H}_2/\widetilde{H}_{1}$
and, moreover, $D\varphi(x)(H_2)=\widetilde{H}_2$.

Similarly, we can show that
$\mathcal D\varphi(x)(H_i/H_{i-1})=\widetilde{H}_i/\widetilde{H}_{i-1}$ and
$D\varphi(x)(H_i)=\widetilde{H}_i$, $i=3,\ldots,\widetilde{M}$. The
proposition follows.
\end{proof}

Claim {\bf I} implies

\begin{cor} The existence of $x\in\mathbb M$ with $\operatorname{rank}\widehat{D}\varphi(x)=\widetilde{N}$ implies that
$n_i\geq\tilde{n}_i$, $i=1,\ldots,M$.
\end{cor}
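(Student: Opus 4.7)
The plan is to reduce everything to the surjectivity statement provided by Claim I of Proposition~\ref{propdiff} and then compare dimensions layer by layer.

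First, I would invoke the contrapositive of Claim I\,(b): if $\operatorname{rank}\widehat{D}\varphi(x)=\widetilde{N}$ at some $x\in\mathbb M$, then we cannot have $\widehat{D}\varphi(x)(V_1)\varsubsetneq\widetilde{V}_1$, so necessarily $\widehat{D}\varphi(x)(V_1)=\widetilde{V}_1$. Observe that such an $x$ automatically lies in $\mathbb M\setminus Z$, since the rank of $D\varphi(x)$ is bounded below by the rank of $\widehat{D}\varphi(x)$ (the blocks $\mathcal V_i$ coincide in \eqref{riemdiff} and \eqref{subriemdiff}), so the hypothesis of Proposition~\ref{propdiff} is satisfied.

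Second, I would apply Claim I\,(a) of the same proposition to conclude that $\widehat{D}\varphi(x)(V_i)=\widetilde{V}_i$ for every $i=1,\ldots,M$. At this point the block-diagonal form \eqref{subriemdiff} of $\widehat{D}\varphi(x)$ shows that the restriction $\widehat{D}\varphi(x)|_{V_i}$ coincides with the block $\mathcal V_i(x)\colon V_i\to\widetilde V_i$, which is a linear surjection between spaces of dimension $n_i$ and $\tilde n_i$ respectively. Surjectivity forces $n_i\geq\tilde n_i$ for each $i=1,\ldots,M$ (and $\tilde n_i=0$, equivalently $\widetilde V_i=\{0\}$, whenever $i>\widetilde M$, so the inequality is vacuous there).

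There is essentially no hard step: the only subtlety is checking that the hypothesis of Proposition~\ref{propdiff} applies (i.e.\ $x\notin Z$) and that the image of $V_i$ under the $hc$-differential really does correspond to the $i$-th block $\mathcal V_i$; both are immediate from the block structures \eqref{riemdiff}--\eqref{subriemdiff}. The corollary then follows at once by dimension count.
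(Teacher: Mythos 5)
Your proof is correct and follows exactly the route the paper intends: the paper's stated justification is simply ``Claim I implies,'' and you have filled in those details in the natural way (contrapositive of I\,(b) to get $\widehat{D}\varphi(x)(V_1)=\widetilde{V}_1$, then I\,(a) to propagate to all layers, then a layerwise dimension count via the blocks $\mathcal V_i$). Your extra observation that the hypothesis forces $x\notin Z$ — because $\operatorname{rank} D\varphi(x)\ge\operatorname{rank}\widehat{D}\varphi(x)$ by the block-triangular vs.\ block-diagonal structure — is a correct and worthwhile check that the paper leaves implicit.
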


\begin{defn} The set
$$
\chi=\{x\in\mathbb M\setminus Z:\operatorname{rank}\widehat{D}\varphi(x)<\widetilde{N}\}
$$
is called the {\it characteristic} set. The points of $\chi$ are
called {\it characteristic points}.

For $t\in\widetilde{\mathbb M}$, denote the intersection $\varphi^{-1}(t)\cap\chi$ by~$\chi_t$.
\end{defn}

\begin{rem}
For $\widetilde{\mathbb M}=\mathbb R$ this definition coincides with the definition of the characteristic set given by P.~Pansu~{\cite{P}} and J. Heinonen~{\cite{H}}: in this case the condition $\operatorname{rank}\widehat{D}\varphi(x)<1$ implies that all the horizontal derivatives $X_i\varphi(x)$, $i=1,\ldots, n$, vanish at $x$, and vice versa.
\end{rem}

\begin{rem}
Proposition {\ref{propdiff}} explains the second part of Assumption {\ref{dimtop}}: if $\mathbb M\neq\chi$ then we do not need to assume that $n_i\geq\tilde{n}_i$ for $i\geq 2$.
\end{rem}

\begin{assump}\label{assumpphi} If $\mathcal H^{\nu}(\chi)=\mathcal H^{N}(\chi)=0$ then we assume that $\varphi\in C^1(\mathbb M, \widetilde{\mathbb M})$ and $X_i\in C^{2}(\mathbb M)$, $i=1,\ldots, N$ (this condition is sufficient for establishing the $hc$-differentiability of $\varphi$); otherwise, we assume that $\varphi\in C^{M+1}(\mathbb M, \widetilde{\mathbb M})$ and $X_i\in C^{M+1}(\mathbb M)$, $i=1,\ldots, N$.
\end{assump}

\begin{defn}The set
$$
\mathbb D=\{x\in\mathbb M:\operatorname{rank}\widehat{D}\varphi(x)=\widetilde{N}\}
$$
is called the {\it regular} set. If $x\in\mathbb D$ then we say that $x$ is a {\it regular} point.
\end{defn}

\begin{lem}\label{sum} {\bf I.} For the set
\begin{multline*}
\zeta=\Bigl\{x\in\mathbb M\setminus Z:\ \exists\{X_{i_1},\ldots, X_{i_{\widetilde{N}}}\}\\
\bigl(\operatorname{rank}([X_{i_j}\varphi](x))_{j=1}^{\widetilde{N}}=\widetilde{N}\bigl)\Rightarrow\Bigl(\sum\limits_{j=1}^{\widetilde{N}}\operatorname{deg}
X_{i_j}<\tilde{\nu}\Bigr)\Bigr\}
\end{multline*}
we have $\zeta=\varnothing$ $($see Notation~${\ref{setz}}$ for the description of~$Z)$.

{\bf II.} If there exists a family $\{X_{i_1},\ldots,X_{i_{\widetilde{N}}}\}$ of vector fields with the properties
$\operatorname{rank}([X_{i_j}\varphi](x))=\widetilde{N}$ and $\sum\limits_{j=1}^{\widetilde{N}}\operatorname{deg}
X_{i_j}=\tilde{\nu}$, then we have $\operatorname{deg} X_{i_j}\leq \widetilde{M}$,
$j=1,\ldots,\widetilde{N}$.
\end{lem}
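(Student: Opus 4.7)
The plan is to exploit the upper block triangular structure of $D\varphi(x)$ in \eqref{riemdiff}, together with the full-rank hypothesis, to obtain a purely combinatorial lower bound on $\sum_j\operatorname{deg} X_{i_j}$. Given a family $\{X_{i_1},\ldots,X_{i_{\widetilde{N}}}\}$, write $d_j=\operatorname{deg}X_{i_j}$, $B_k=\#\{j:d_j\leq k\}$ for $k=0,1,\ldots,M$, and $\widetilde{N}_k=\tilde{n}_1+\cdots+\tilde{n}_k$. Then $B_0=0$, $B_M=\widetilde{N}$, and a direct Abel summation yields the two identities
$$
\sum_{j=1}^{\widetilde{N}} d_j=M\widetilde{N}-\sum_{k=1}^{M-1}B_k,\qquad \tilde{\nu}=\sum_{k=1}^{\widetilde{M}}k\tilde{n}_k=\widetilde{M}\widetilde{N}-\sum_{k=1}^{\widetilde{M}-1}\widetilde{N}_k.
$$

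The key step is to read off from \eqref{riemdiff} --- equivalently, from the contact property $D\varphi\langle H_d\rangle\subset\widetilde{H}_d$ --- that in the degree-ordered basis $\{\widetilde{X}_s\}$ of $T_{\varphi(x)}\widetilde{\mathbb M}$ the column $X_{i_j}\varphi=D\varphi\langle X_{i_j}\rangle$ is supported in the first $\widetilde{N}_{d_j}$ coordinates. Hence, for every $k=1,\ldots,\widetilde{M}$, the sub-matrix of $[X_{i_j}\varphi]_{j=1}^{\widetilde{N}}$ obtained by deleting the top $\widetilde{N}_{k-1}$ rows can have nonzero entries only in those columns with $d_j\geq k$, of which there are exactly $\widetilde{N}-B_{k-1}$. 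Since the $\widetilde{N}\times\widetilde{N}$ matrix has rank $\widetilde{N}$, its rows are linearly independent and this sub-matrix already has rank $\widetilde{N}-\widetilde{N}_{k-1}$, which forces
$$
\widetilde{N}-B_{k-1}\geq \widetilde{N}-\widetilde{N}_{k-1},\qquad\text{i.e.,}\qquad B_{k-1}\leq \widetilde{N}_{k-1},\quad k=1,\ldots,\widetilde{M}.
$$

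Combining these bounds with the trivial estimate $B_k\leq\widetilde{N}$ for $\widetilde{M}\leq k\leq M-1$ (from monotonicity of $B_k$ and $B_M=\widetilde{N}$) and substituting into the first Abel identity gives
$$
\sum_{j=1}^{\widetilde{N}}d_j\geq M\widetilde{N}-\sum_{k=1}^{\widetilde{M}-1}\widetilde{N}_k-(M-\widetilde{M})\widetilde{N}=\widetilde{M}\widetilde{N}-\sum_{k=1}^{\widetilde{M}-1}\widetilde{N}_k=\tilde{\nu}.
$$
Since $x\in\mathbb M\setminus Z$ guarantees that at least one full-rank family exists (extract $\widetilde{N}$ independent columns from the rank-$\widetilde{N}$ matrix $[D\varphi\langle X_i\rangle]_{i=1}^{N}$), this proves Part~\textbf{I}. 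For Part~\textbf{II}, equality $\sum d_j=\tilde{\nu}$ saturates every inequality above, forcing $B_k=\widetilde{N}_k$ for $k\leq\widetilde{M}-1$ and in particular $B_{\widetilde{M}}=\widetilde{N}$, so that no selected $X_{i_j}$ has degree exceeding $\widetilde{M}$.

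The main obstacle is the matrix-theoretic step: correctly translating the upper block triangular form in \eqref{riemdiff} together with full-rank-ness into the bound $B_{k-1}\leq\widetilde{N}_{k-1}$. Once this is in place, everything else is routine bookkeeping via Abel summation.
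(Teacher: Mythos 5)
Your proof is correct. The crucial structural input is identical to the paper's: the contact property / upper block triangular form in \eqref{riemdiff} forces every column $X_{i}\varphi$ of degree $d$ to be supported in the first $\widetilde{N}_d=\tilde n_1+\cdots+\tilde n_d$ coordinates, so any linearly independent family can contain at most $\widetilde{N}_{k}$ columns of degree $\leq k$. Where you diverge is in the bookkeeping: the paper argues greedily, explicitly constructing a minimizing selection block by block (using elementary column operations to show one cannot pick more than $\tilde n_k$ independent columns from the $k$-th block, and that a shortfall must be made up with higher-degree columns), and then reads off the total $\tilde\nu$. You instead encode the constraint as the inequalities $B_{k-1}\leq\widetilde N_{k-1}$, $k\leq\widetilde M$, and derive the lower bound $\sum_j d_j\geq\tilde\nu$ by Abel summation, with Part~II dropping out from the saturation of those inequalities. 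The Abel route is shorter, avoids the somewhat informal "replace a missing column" step in the paper, and yields Part~II cleanly as a rigidity statement rather than a separate argument. One small point worth making explicit in your writeup: the Abel identity uses the convention $B_M=\widetilde N$ (which is automatic) and the deduction $B_{\widetilde M}=\widetilde N$ in Part~II requires $\widetilde M\leq M$ and splitting the cases $\widetilde M<M$ (where $k=\widetilde M$ appears in the trivial range and equality forces $B_{\widetilde M}=\widetilde N$) and $\widetilde M=M$ (where $B_{M}=\widetilde N$ trivially); you implicitly assume the first case in the phrase "in particular $B_{\widetilde M}=\widetilde N$," so a one-line remark would close this.
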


\begin{proof} Fix $x\in\mathbb M$ and consider the matrix of $D\varphi(x)$:
$$
D\varphi(x)=\left(%
\begin{array}{ccccccccc}
 {\cal V}_1(u) &  * & * & * & * & * & * & \ldots & * \\
0 & {\cal V}_2(u) & * & * & * & * & * & \ldots & * \\
0 & 0 & {\cal V}_3(u) & * & * & * & * & \ldots & * \\
\vdots & \vdots & \vdots & \ddots & \ddots & \vdots & * & \ldots & * \\
0 & \ldots & 0 & \vdots & {\cal V}_{\widetilde{M}-1}(u) & * & * & \ldots & * \\
0 & \ldots & 0 & \ldots & 0 & {\cal V}_{\widetilde{M}}(u) & * & \ldots & * \\
\end{array}%
\right).
$$
Choose $\widetilde{N}$
linearly independent columns with the minimal possible sum of the
corresponding degrees (we consider $\operatorname{deg}X_j$ as the degree of column $j$, $j=1, \ldots, N$).

To this end, we must choose the maximal possible quantity of
vectors from the blocks corresponding to the minimal degrees. In the
first block, we can choose at most $\tilde{n}_1$ linearly
independent vectors. Next, take columns from $n_1+1$ to $n_1+n_2$ and the
corresponding ``diagonal'' block. In this block, we can
choose at most $\tilde{n}_2$ linearly independent $\tilde{n}_2$-dimensional
elements.

On assuming that there are more than $\tilde{n}_2$
linearly independent columns, we obtain a contradiction.
Indeed, since the ``diagonal'' block is an $(\tilde{n}_2\times
n_2)$-matrix, there exists an elementary transformation reducing at least $n_2-\tilde{n}_2$ of its columns to zero. Apply this
transformation to the columns of the matrix of $D\varphi(x)$. Then, this block of size $(\tilde{n}_1+\tilde{n}_2)\times(n_1+n_2)$
includes $n_1+n_2-\tilde{n}_2$ columns of dimension
$\tilde{n}_1$ (more exactly, these column vectors belong to $\mathbb R^{\tilde{n}_1}\times 0^{\tilde{n}_2}$). Recall that the maximal number of linearly independent columns is $\tilde{n}_1$, and we have already chosen them in the first ``diagonal'' block. Suppose that there are less than $\tilde{n}_1+\tilde{n}_2$ linearly independent columns. Then, since $\operatorname{rank}D\varphi(x)=\widetilde{N}$, a ``missing'' column can be ``replaced'' by a column of a higher degree. Thus, we can obtain the minimal possible sum of degrees if we have only
$n_1-\tilde{n}_1+n_2-\tilde{n}_2$ linearly dependent among the first $n_1+n_2$ columns. Therefore, the corresponding sum of degrees equals $\tilde{n}_1+2\tilde{n}_2$.

Applying further the same arguments to the degrees $3,\ldots,\widetilde{M}$,
we conclude that the minimal possible sum of degrees of linearly
independent vector fields $\{X_{i_j}\varphi\}_{j=1}^{\widetilde{N}}$ is equal
to $\tilde{\nu}$. Thus, claim {\bf I} is proved.

Claim {\bf II}
follows since, firstly, we can obtain the sum
equal to $\tilde{\nu}$ only by considering the first $\widetilde{M}$ blocks,
and, secondly, if  we have less than $\tilde{n}_k$ linearly
independent vector fields on step $k\leq \widetilde{M}$ then the
sum of degrees corresponding to the resulting collection is
strictly greater than $\tilde{\nu}$.
\end{proof}

\begin{thm}\label{structure} {\bf I.} The characteristic set $\chi$
coincides with
\begin{multline}\label{chi}
\Bigl\{x\in\mathbb M\setminus Z:\ \forall\{X_{i_1},\ldots, X_{i_{\widetilde{N}}}\}\\
\bigl(\operatorname{rank}([X_{i_j}\varphi](x))_{j=1}^{\widetilde{N}}=\widetilde{N}\bigl)\Rightarrow\Bigl(\sum\limits_{j=1}^{\widetilde{N}}\operatorname{deg}
X_{i_j}>\tilde{\nu}\Bigr)\Bigr\}.
\end{multline}
{\bf II.} The regular set $\mathbb D$ coincides with
\begin{multline}\label{reg}
\Bigl\{x\in\mathbb M\setminus Z:\ \exists\{X_{i_1},\ldots, X_{i_{\widetilde{N}}}\}\\
\bigl(\operatorname{rank}([X_{i_j}\varphi](x))_{j=1}^{\widetilde{N}}=\widetilde{N}\bigl)\Rightarrow\Bigl(\sum\limits_{j=1}^{\widetilde{N}}\operatorname{deg}
X_{i_j}=\tilde{\nu}\Bigr)\Bigr\}.
\end{multline}
\end{thm}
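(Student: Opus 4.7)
The plan is to exploit the partition of $\mathbb{M}\setminus Z$ supplied by Lemma~\ref{sum}. Since Lemma~\ref{sum}~I rules out any admissible collection of $\widetilde{N}$ linearly independent columns of $D\varphi(x)$ with total degree less than $\tilde{\nu}$, the set $\mathbb{M}\setminus Z$ splits disjointly into $A=\{x:\text{sum }\tilde{\nu}\text{ is achievable}\}$ and $B=\{x:\text{every admissible sum is strictly greater than }\tilde{\nu}\}$. Because $\mathbb{D}$ and $\chi$ also partition $\mathbb{M}\setminus Z$, I will prove the theorem by establishing both inclusions $\mathbb{D}\subset A$ and $\chi\subset B$.

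For $\mathbb{D}\subset A$ I will exhibit an explicit collection. At a regular point $x$, Proposition~\ref{propdiff}~II~(a) yields $D\varphi(x)\langle H_i\rangle=\widetilde{H}_i$ for every $i=1,\ldots,\widetilde{M}$, so each diagonal block $\mathcal{V}_i$ in~\eqref{riemdiff} has full rank $\tilde{n}_i$. I then pick $\tilde{n}_i$ columns from the degree-$i$ group whose $\mathcal{V}_i$-components are linearly independent. The block upper-triangular structure of~\eqref{riemdiff} forces these $\widetilde{N}$ columns to be globally independent, and their degrees sum to $\sum_{i=1}^{\widetilde{M}} i\,\tilde{n}_i=\tilde{\nu}$.

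The main obstacle is the reverse inclusion $\chi\subset B$, where a defect at the horizontal level must propagate to every admissible family. For $x\in\chi$ I set $w_k=\dim D\varphi(x)\langle H_k\rangle$; the contact property gives $w_k\le \widehat{N}_k:=\min\bigl(\tilde{n}_1+\cdots+\tilde{n}_k,\widetilde{N}\bigr)$, and $w_M=\widetilde{N}$ since $x\notin Z$. The crucial strict inequality $w_1<\tilde{n}_1=\widehat{N}_1$ is supplied by Proposition~\ref{propdiff}~I, since a characteristic point satisfies $\widehat{D}\varphi(x)\langle V_1\rangle\subsetneq \widetilde{V}_1$. For any admissible family $\{X_{i_j}\}_{j=1}^{\widetilde{N}}$, the count $m_k=\#\{j:\operatorname{deg}X_{i_j}\le k\}$ satisfies $m_k\le w_k$, because the corresponding images span a subspace of $D\varphi(x)\langle H_k\rangle$. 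An Abel-summation identity then gives
\[
\sum_{j=1}^{\widetilde{N}}\operatorname{deg}X_{i_j}=M\widetilde{N}-\sum_{k=1}^{M-1}m_k\ge M\widetilde{N}-\sum_{k=1}^{M-1}w_k,
\]
while the same identity applied to the saturated profile yields $M\widetilde{N}-\sum_{k=1}^{M-1}\widehat{N}_k=\tilde{\nu}$. Since $w_k\le\widehat{N}_k$ for all $k$ with strict inequality at $k=1$, the right-hand side strictly exceeds $\tilde{\nu}$, placing $x$ in $B$.

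Combining both inclusions gives $\mathbb{D}=A$ and $\chi=B$, which is precisely the claim of Theorem~\ref{structure}.
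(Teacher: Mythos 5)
Your proof is correct, and for the difficult half — showing that at a characteristic point every full-rank subfamily has degree-sum strictly exceeding $\tilde{\nu}$ — it takes a genuinely different route. The paper argues by a matrix transformation: it constructs a block-preserving operation~$\tau$ annihilating one column of the matrix of~$\widehat{D}\varphi(x)$, transfers $\tau$ to the matrix of $D\varphi(x)$, and tracks the location of the zeroed column to show the selected $\widetilde N$ columns become linearly dependent. You replace that delicate bookkeeping by a dimension count: you introduce the profile $w_k=\dim D\varphi(x)\langle H_k\rangle$, observe that for any rank-$\widetilde{N}$ subfamily the count $m_k=\#\{j:\operatorname{deg}X_{i_j}\le k\}$ satisfies $m_k\le w_k$ (those $m_k$ image vectors are independent and lie in $D\varphi(x)\langle H_k\rangle$), and use the Abel-summation identity $\sum_j\operatorname{deg}X_{i_j}=M\widetilde{N}-\sum_{k=1}^{M-1}m_k$ together with the identity $M\widetilde{N}-\sum_{k=1}^{M-1}\widehat{N}_k=\tilde{\nu}$ for the saturated profile. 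The characteristic defect then enters through a single strict inequality $w_1<\tilde n_1$, supplied by Proposition~\ref{propdiff}~I (combined with the paper's remark that $D\varphi(x)(H_1)=\widetilde H_1$ iff $\widehat{D}\varphi(x)(V_1)=\widetilde V_1$). The only detail worth noting is that strictness requires $w_1$ to actually appear in the sum, i.e.\ $M\ge2$; when $M=1$ one has $\chi=\varnothing$, so nothing is lost. Your treatment of the $\mathbb D$ inclusion (pick $\tilde n_i$ columns per block with independent $\mathcal V_i$-components, use the block-triangular structure) and the partition reasoning converting two inclusions into two equalities essentially mirror the paper's Steps~II(a)--(b). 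The Abel-summation argument is the genuine novelty: it avoids the paper's case analysis on the position of the annihilated column, and it makes transparent exactly which dimensional deficiency (the failure of horizontal surjectivity) forces the degree-sum above $\tilde{\nu}$.
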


\begin{proof} {\bf I.} {\bf (a)} Denote the set in {\eqref{chi}} by $A$,
and verify that $A\subset\chi$. Consider a point $x\in A$.  By
{\eqref{chi}}, every $\widetilde{N}$ columns of the matrix of
$D\varphi(x)$ with indices $i_1,\ldots, i_{\widetilde{N}}$ corresponding to a collection $\{X_{i_1},\ldots,
X_{i_{\widetilde{N}}}\}$ with $\sum\limits_{j=1}^{\widetilde{N}}\operatorname{deg} X_{i_j}=\tilde{\nu}$
are linearly dependent. Assume on the contrary that the rank
of the matrix of $\widehat{D}\varphi(x)$ equals $\widetilde{N}$, and
consequently, there exist $\widetilde{N}$ linearly independent columns in the matrix of $\widehat{D}\varphi(x)$. The matrix of $\widehat{D}\varphi(x)$ has a block structure, where
the block $k$ is an $(\tilde{n}_k\times n_k)$-matrix. Thus, in each
block, only $\tilde{n}_k$ columns can be linearly
independent. Consequently, the sum of the degrees of the vector fields
corresponding to these linearly independent columns equals $\tilde{\nu}$.

The relation between the matrices of $D\varphi(x)$ and
$\widehat{D}\varphi(x)$ implies that the corresponding columns of the matrix of
$D\varphi(x)$ are also linearly independent, and the sum of the degrees of
the corresponding vector fields is equal to $\tilde{\nu}$. Thus, we arrive at a contradiction.

The argument above implies that $A\subset\chi$.

{\bf (b)} Verify that $\chi\subset A$. Consider $x\in\chi$. Since
$\operatorname{rank}\widehat{D}\varphi(x)<\widetilde{N}$, it follows that every $\widetilde{N}$ columns are
linearly dependent. Our goal is to show that if we take the columns of the matrix of $D\varphi(x)$ with indices $i_1,\ldots, i_{\widetilde{N}}$ such that
$$\sum\limits_{j=1}^{\widetilde{N}}\operatorname{deg}
X_{i_j}\leq\tilde{\nu}$$
then
$$\operatorname{rank}([X_{i_j}\varphi](x))_{j=1}^{\widetilde{N}}<\widetilde{N}.
$$ In view of claim {\bf I} of Lemma~{\ref{sum}}, it suffices to consider the columns with indices $i_1,\ldots, i_{\widetilde{N}}$ satisfying
$$\sum\limits_{j=1}^{\widetilde{N}}\operatorname{deg}
X_{i_j}=\tilde{\nu}.
$$ Take $\widetilde{N}$ columns
$c_{i_1},\ldots,c_{i_{\widetilde{N}}}$ of the matrix of $\widehat{D}\varphi(x)$ corresponding to some vectors
$X_{i_1},\ldots,X_{i_{\widetilde{N}}}$ with $\sum\limits_{j=1}^{\widetilde{N}}\operatorname{deg}
X_{i_j}=\tilde{\nu}$. Since
$$\operatorname{rank}(\{c_{i_1},\ldots,c_{i_{\widetilde{N}}}\})<\widetilde{N},
$$
there exists a
transformation $\tau$ of the  matrix of $\widehat{D}\varphi(x)$ taking at least one of $c_{i_j}$ to a zero column, and preserving the block structure of the initial
matrix. Denote this column by
$c_{i_{j_0}}$. Here we assume that $j_0$ is the minimal number with this property. Put $k=\operatorname{deg} X_{i_{j_0}}-1$. Because of the block
structure of the matrix $\widehat{D}\varphi(x)$, the numbers of
nonzero entries of $c_{i_{j_0}}$ are at least $\dim
\widetilde{H}_k+1$, and at most $\dim \widetilde{H}_{k+1}$. Without loss of generality we may assume that $i_{j_{0}}=\dim H_k+1$.
Since we cannot transform all preceding columns into zero columns, it follows that $j_0=\dim \widetilde{H}_k+1$.

Apply the same transformation $\tau$ to the matrix of $D\varphi(x)$, and consider the images
of $j_0$ columns with indices $i_1,\ldots,i_{j_0}$. Note
that we may regard the image of column ${i_{j_0}}=\dim H_k+1$ as an element of
$\mathbb R^{\dim \widetilde{H}_k}$. Taking the structure of $D\varphi(x)$ into
account, we have $j_0=\dim \widetilde{H}_k+1$ vectors (columns) belonging to
the space $\mathbb R^{\dim \widetilde{H}_k}$ (because the components with
indices greater than $\dim \widetilde{H}_k$ vanish). Thus, the rank of
this collection equals $\dim \widetilde{H}_k<j_0$. Consequently, the rank
of $\{X_{i_1}\varphi,\ldots,X_{i_{\widetilde{N}}}\varphi\}$ is strictly less
than $\widetilde{N}$. Since the collections of $\widetilde{N}$ vectors with the
sum of degrees equal to $\tilde{\nu}$ appear only in
the matrix of $\widehat{D}\varphi(x)$ (see Lemma {\ref{sum}}), we see
that if the sum of the degrees is equal to $\tilde{\nu}$, then
the rank is strictly less than $\widetilde{N}$.  By taking Lemma {\ref{sum}} and
the fact that $\operatorname{rank}(D\varphi(x))=\widetilde{N}$ into account, we see that
$\chi\subset A$.

{\bf II.} {\bf (a)} Denote the set in {\eqref{reg}} by $B$. Assume the contrary and take $x\in B$ with
$\operatorname{rank}(\widehat{D}\varphi(x))<\widetilde{N}$. Step {\bf I} yields $x\in
A$, and thus, we obtain a contradiction since $A\cap B=\varnothing$. Consequently,
$B\subset\mathbb D$.

{\bf (b)} Take $x\in\mathbb D$. Since $\operatorname{rank}\widehat{D}\varphi(x)=\widetilde{N}$, it follows that
$\operatorname{rank}{D}\varphi(x)=\widetilde{N}$ and $x\notin Z$. Take an arbitrary collection
of $\widetilde{N}$ linearly independent columns of the matrix of
$\widehat{D}\varphi(x)$. Consequently, the corresponding columns of the matrix of
$D\varphi(x)$ are linearly independent as well, and $x\in B$. Thus,
$\mathbb D\subset B$.

The theorem follows.
\end{proof}

\section{Properties of Level Sets}

In this section, we assume that $x\in\mathbb M\setminus Z$.

First of all, we introduce a new metric, which is equivalent to the initial one and simplifies our computations.

\begin{defn} Let $\mathbb M$ be a Carnot manifold of topological dimension
$N$ and depth $M$, and put
$x=\exp\Bigl(\sum\limits_{i=1}^{N}x_i X_i\Bigr)(g)$. Define the
distance $d_2(x,g)$ as follows:
\begin{multline*}
d_2(x,g)=\max\Bigl\{\Bigl(\sum\limits_{j=1}^{n_1}|x_j|^2\Bigr)^{\frac{1}{2}},\\
\Bigl(\sum\limits_{j=n_1+1}^{n_1+n_2}|x_j|^2\Bigr)^{\frac{1}{2\cdot\operatorname{deg}
X_{n_1+1}}},\ldots,\Bigl(\sum\limits_{j=N-n_{M}+1}^{N}|x_j|^2\Bigr)^{\frac{1}{2\cdot\operatorname{deg}
X_{N}}}\Bigr\}.
\end{multline*}

A similar metric $d_2^u$ is introduced on the local Carnot
group $\mathcal G^u\mathbb M$.

The set $\{y\in\mathbb M: d_2(y,x)<r\}$ is called the ball of radius $r>0$ centered at $x$ and denoted by $\operatorname{Box}_2(x,r)$. Similarly, $\operatorname{Box}_2^u(x,r)$ stands for the ball in $d_2^u$ of radius $r>0$  centered at $x$.
\end{defn}

\begin{rem}\label{box2} The preimage of $\operatorname{Box}_2(x,r)$ in the metric
$d_2$ under the mapping $\theta_x$ equals
$$
\operatorname{Box}_2(0,r)=B_2^{n_1}(x,r)\times B_2^{n_2}(x,r^2)\times\ldots\times
B_2^{n_{M}}(x,r^{M}),
$$
where $B_2^{n_i}$ is a Euclidean ball of dimension $n_i$, $i=1,\ldots,M$.

Observe that
$\operatorname{Box}_2^u(u,r)=\operatorname{Box}_2(u,r)$
in the quasimetric $d_2^u$ since
\begin{equation}\label{eqexp}
\exp\Bigl(\sum\limits_{i=1}^{N}x_i \widehat{X}^u_i\Bigr)(u)=\exp\Bigl(\sum\limits_{i=1}^{N}x_i X_i\Bigr)(u)
\end{equation}
for all collections $\{x_i\}_{i=1}^N$ such that both parts of {\eqref{eqexp}} make sense {\cite{V2}}.
\end{rem}

The following proposition is useful for proving the main results.

\begin{prop}\label{prop21vg}  Let $\mathbb M$ be a Carnot manifold of topological dimension
$N$ and depth $M$. Given a sufficiently small compact domain
$U\Subset\mathbb M$, there exist
positive constants $C>0$ and $r_0>0$ depending on $U$, $M$, and
$N$ such that all points $u$ and $v$ of $U$ satisfy
$$
\bigcup\limits_{x\in\operatorname{Box}_2^u (v,r)} \operatorname{Box}_2^u (x,\xi)\subseteq
\operatorname{Box}_2^u (v,r+C\xi),\quad 0<\xi,\,  r\leq r_0.
$$
\end{prop}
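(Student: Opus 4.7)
The plan is to work in the group structure of the local Carnot group $\mathcal G^u\mathbb M$: use left-invariance of $d_2^u$ to translate $v$ to the identity, and then reduce the inclusion to a coordinate-wise bound coming directly from the Baker--Campbell--Hausdorff-type formulas in Definition~\ref{groupoperator}.

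Because $\{\widehat X_i^u\}_{i=1}^N$ is a left-invariant frame on $\mathcal G^u\mathbb M$, the quasimetric $d_2^u$ is itself left-invariant. Hence any $x\in\operatorname{Box}_2^u(v,r)$ can be written as $x=v\cdot w_1$ with $w_1\in\operatorname{Box}_2^u(0,r)$, any $y\in\operatorname{Box}_2^u(x,\xi)$ as $y=x\cdot w_2$ with $w_2\in\operatorname{Box}_2^u(0,\xi)$, and then $y=v\cdot(w_1\cdot w_2)$. The claim therefore reduces to
$$
d_2^u(0,w_1\cdot w_2)\leq r+C\xi.
$$
For the product $z=w_1\cdot w_2$, Definition~\ref{groupoperator} supplies, for each $i$ with $\operatorname{deg}X_i=k$,
$$
z_i=(w_1)_i+(w_2)_i+\sum_{\substack{|\mu+\beta|_h=k\\ \mu,\beta>0}}F^i_{\mu,\beta}(u)\,(w_1)^\mu(w_2)^\beta.
$$

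From $d_2^u(0,w_1)\leq r$ I have $|(w_1)_l|\leq r^{\operatorname{deg}X_l}$, and similarly $|(w_2)_l|\leq\xi^{\operatorname{deg}X_l}$, so every cross monomial is bounded by $|(w_1)^\mu(w_2)^\beta|\leq r^{|\mu|_h}\xi^{|\beta|_h}$. The crucial observation is that $|\beta|_h\geq 1$, which lets me extract one factor of $\xi$ via the elementary estimate $r^{|\mu|_h}\xi^{|\beta|_h}\leq(r+\xi)^{k-1}\xi$. Using also that the number of multi-indices depends only on $M,N$ and that $\sup_{u\in U}|F^i_{\mu,\beta}(u)|<\infty$ by continuity and compactness of $U$, I obtain
$$
\Bigl(\sum_{i:\,\operatorname{deg}X_i=k}z_i^2\Bigr)^{1/2}\leq r^k+\xi^k+C_0(U,M,N)\,(r+\xi)^{k-1}\xi.
$$
A short case analysis $\xi\leq r$ versus $\xi>r$ then shows that this majorant is in turn dominated by $(r+C\xi)^k$ once $C$ is chosen large: in the first regime the bound reduces to $r^k+C_1 r^{k-1}\xi$, absorbed by the first two Taylor terms of $(r+C\xi)^k$; in the second regime it reduces to $O(\xi^k)$, absorbed by the leading term $(C\xi)^k$. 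Taking $C$ to be the maximum of the resulting thresholds over $k=1,\dots,M$, extracting $k$th roots and maximising over $k$, I arrive at $d_2^u(0,z)\leq r+C\xi$ with $C$ depending only on $U$, $M$, $N$. The radius $r_0$ is fixed small enough that the BCH formulas of Definition~\ref{groupoperator} are valid on the relevant region and all products used above stay inside $\mathcal G^u\mathbb M$, uniformly in $u,v\in U$.

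The principal obstacle will be this last comparison: passing from the raw monomial bound $r^{|\mu|_h}\xi^{|\beta|_h}$ to an \emph{additive} majorant $r+C\xi$ rather than the cruder quasi-triangle form $C'(r+\xi)$. This sharpening relies on $|\beta|_h\geq 1$ enforced by $\beta>0$ in the BCH formula, which guarantees at least one factor of $\xi$ in every cross term and thus makes those terms contribute only an $O(\xi)$ perturbation to the effective radius; losing this refinement would yield a useless quasi-triangle inequality and break the downstream measure estimates of Theorems~\ref{tang_meas} and~\ref{Leb_meas}.
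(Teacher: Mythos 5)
Your proof is correct and matches the paper's argument in substance: both reduce the estimate to the group law of Definition~\ref{groupoperator} applied to the exponential coordinates of $x$ about $v$ and of $z$ about $x$, and both exploit the constraint $\beta>0$ (hence $|\beta|_h\geq 1$) to extract one factor of $\xi$ from every cross monomial $x^\mu z^\beta$, which together with compactness of $U$ upgrades a crude quasi-triangle bound to the additive form $r+C\xi$. Your explicit appeal to left-invariance of $d_2^u$ and your $\xi\leq r$ versus $\xi>r$ case split are merely a rephrasing of the paper's direct absorption of the cross terms into a majorant $(r+a_k(u)\xi)^{2k}$ followed by $a(u)=\max_k a_k(u)\leq C$ on $U$.
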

\begin{proof} The proof follows the scheme of proof of the similar lemma for
boxes in the metric $d^u_{\infty}$ of {\cite{VK, vk_birk}}.

Put $x=\exp\Bigl(\sum\limits_{i=1}^Nx_i\widehat{X}^u_i\Bigr)(v)$,
$d_2^u(v,x)\leq r$, and
$z=\exp\Bigl(\sum\limits_{i=1}^Nz_i\widehat{X}^u_i\Bigr)(x)$,
$d_2^u(x,z)\leq\xi$. Estimate the distance $d_2^u(v,z)$
applying group operation to points $x$ and $z$. Namely, estimate the coefficients $\{\zeta_i\}_{i=1}^N$ satisfying
$z=\exp\Bigl(\sum\limits_{i=1}^N\zeta_i\widehat{X}^u_i\Bigr)(v)$.

{\sc Case of $\operatorname{deg} X_i=1$.} We have
$$
\sum\limits_{i=1}^{n}(\zeta_i)^2=\sum\limits_{i=1}^{n}(x_i+z_i)^{2}
=\sum\limits_{i=1}^{n}(x_i)^{2}+\sum\limits_{i=1}^{n}(z_i)^{2}+2\sum\limits_{i=1}^{n}(x_iz_i)\leq
(r+\xi)^{2\operatorname{deg} X_i}.
$$

{\sc Case of $\operatorname{deg} X_i=2$.} We have
\begin{multline*}
\sum\limits_{i=n+1}^{n+n_2}(\zeta_i)^2\leq
\sum\limits_{i=n+1}^{n+n_2}\Bigl(x_i+z_i
+\sum\limits_{\substack{|e_l+e_j|_h=2,\\l<j}}\widehat{F}^i_{e_l,e_j}(u)(x_lz_j-z_lx_j)\Bigr)^2\\
\leq r^4+\xi^4+2r^2\xi^2
+\sum\limits_{i=n+1}^{n+n_2}\Bigl(\sum\limits_{\substack{|e_l+e_j|_h=2,\\l<j}}
\widehat{F}^i_{e_l,e_j}(u)(x_lz_j-z_lx_j)\Bigr)^2\\
+2\sum\limits_{i=n+1}^{n+n_2}\Bigl((x_i+z_i)\sum\limits_{\substack{|e_l+e_j|_h=2,\\l<j}}
\widehat{F}^i_{e_l,e_j}(u)(x_lz_j-z_lx_j)\Bigr)\\
\leq r^4+\xi^4+c_i(u)r^2\xi^2+b_i(u)r\xi(r+\xi)^2\\
\leq(r+a_2(u)\xi)^4 =(r+{a_2(u)}\xi)^{2\operatorname{deg} X_i},
\end{multline*}
where $b_i(u)$ and $c_i(u)$ are linear combinations of the functions $\{\widehat{F}^i_{e_l,e_j}(u)\}_{l,j}$ on assuming that $|x_lz_j-z_lx_j|=2r\xi$ and $|x_i+z_i|=r+\xi$ for all $i,l,j$. They are continuous with respect to $u\in U$. We can represent each sum $r^4+\xi^4+c_i(u)r^2\xi^2+b_i(u)r\xi(r+\xi)^2$ as $(r+d_i(u)\xi)^4$, where $d_i(u)$ depends on $b_i(u)$ and $c_i(u)$. Put $a_2(u)=\max\limits_{i:\,\operatorname{deg}X_i=2}d_i(u)$ and assume without loss of generality that $a_2(u)\geq 1$.

{\sc Case of $\operatorname{deg} X_i=k>2$.} Denote the sum $n+\sum\limits_{j=2}^{k}n_j$ by $S_k$. Then, as in the previous case
of $\operatorname{deg} X_i=2$, we obtain
\begin{multline*}
\sum\limits_{i=S_{k-1}+1}^{S_k}(\zeta_i)^2
\leq
\sum\limits_{i=S_{k-1}+1}^{S_k}
\Bigl(x_i+z_i+\sum\limits_{\substack{|\mu+\beta|_h=k,\\
\mu>0,\beta>0}}|\widehat{F}^i_{\mu,\beta}(u)|x^\mu\cdot
z^\beta\Bigr)^2\\
\leq (r+a_k(u)\xi)^{2k}=(r+a_k(u)\xi)^{2\operatorname{deg} X_i}.
\end{multline*}
Here we use the property
$$
\sum\limits_{\substack{|\mu+\beta|_h=k,\\
\mu>0,\beta>0}}|\widehat{F}^i_{\mu,\beta}(u)|x^\mu\cdot
z^\beta\leq\sum\limits_{\substack{|\mu+\beta|_h=k,\\
\mu>0,\beta>0}}c^i_{\mu,\beta}(u)r^{|\mu|_h}\cdot \xi^{|\beta|_h},
$$
and define each function $a_k(u)$ in the similar way as $a_2(u)$ in the case $\operatorname{deg}X_i=2$.
We also assume without loss of generality that
$a_k(u),c^i_{\mu,\beta}(u)\geq 1$. Put
$a(u)=\max\limits_ia_i(u)$. The estimates above yield
\begin{multline*}
d_2^u(v,x)=\max\Bigl\{\Bigl(\sum\limits_{j=1}^{n}|\zeta_j|^2\Bigr)^{\frac{1}{2}},\\
\Bigl(\sum\limits_{j=n+1}^{n+n_2}|\zeta_j|^2\Bigr)^{\frac{1}{2\cdot\operatorname{deg}
X_{n+1}}},\ldots,\Bigl(\sum\limits_{j=N-n_{M}+1}^{N}|\zeta_j|^2\Bigr)^{\frac{1}{2\cdot\operatorname{deg}
X_{N}}}\Bigr\}\\
\leq \max\limits_i\{(r+a_i(u)\xi)^{\frac{\operatorname{deg} X_i}{\operatorname{deg} X_i}}\}\leq
r+a(u)\xi.
\end{multline*}
Since all $a_i(u)$ are continuous with respect to $u$, we may
choose sufficiently large $C<\infty$ with $a(u)\leq C$ for all $u$
belonging to the given compact domain $U\Subset\mathbb M$. The lemma follows.
\end{proof}

To prove the main theorems, we need a convenient quasimetric equivalent to the Riemannian metric.

\begin{defn}
Given
$$
y=\exp\Bigl(\sum\limits_{i=1}^Ny_iX_i\Bigr)(x),
$$
put $\rho(y,x)=\max\limits_{i=1,\ldots, N}\{|y_i|\}$.
\end{defn}

\begin{notat} In Theorems {\ref{tang_meas}} and
{\ref{Leb_meas}}, we establish some local results for a fixed point
$x$. From now on we use the auxiliary mapping $\psi=\varphi\circ\theta_x$.
\end{notat}

\begin{notat} Put
\begin{multline*}
\nu_0(x)=\min\Bigl\{\nu: \exists\{X_{i_1},\ldots, X_{i_{\widetilde{N}}}\}\\
\bigl(\operatorname{rank}([X_{i_j}\varphi](x))_{j=1}^{\widetilde{N}}=\widetilde{N}\bigl)\Rightarrow\Bigl(\sum\limits_{j=1}^{\widetilde{N}}\operatorname{deg}
X_{i_j}=\nu\Bigr)\Bigr\}.
\end{multline*}
\end{notat}

It is clear that ${\nu_0}|_{\chi}>\tilde{\nu}$ and
${\nu_0}|_{\mathbb D}=\tilde{\nu}$.

\begin{thm}\label{tang_meas} Fix $x\in\varphi^{-1}(t)$.
Then, in a neighborhood in $\mathbb R^N$, the
${\cal H}^{N-\widetilde{N}}$-measure of
$T_0[\psi^{-1}(t)]\cap\operatorname{Box}_2(0,r)$ $($see Remark~${\ref{box2}})$ is equal to
\begin{equation}\label{constc}
Cr^{\nu-\nu_0(x)}(1+o(1))
\end{equation}
where $C$ is independent of $r$, and $o(1)\to0$ as $r\to0$.
\end{thm}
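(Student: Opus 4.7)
The plan is to realize $T=T_0[\psi^{-1}(t)]$ as the kernel of $D\varphi(x)$ in the normal coordinates $(v_1,\dots,v_N)$ provided by $\theta_x$, parametrize it as the graph of a linear map, and compute the measure by an anisotropic rescaling that exploits the product structure of $\operatorname{Box}_2(0,r)$ described in Remark~\ref{box2}.

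\emph{Step 1 (choice of parametrization).} By Theorem~\ref{structure} one may pick $\widetilde N$ indices $\{i_1,\dots,i_{\widetilde N}\}$ so that the corresponding columns of the matrix of $D\varphi(x)$ are linearly independent and $\sum_{j=1}^{\widetilde N}\operatorname{deg}X_{i_j}=\nu_0(x)$; let $a_k$ be the number of chosen indices of degree $k$, so that $\sum_k a_k=\widetilde N$, $\sum_k ka_k=\nu_0(x)$ and the complementary ``free'' indices have total degree $\sum_k k(n_k-a_k)=\nu-\nu_0(x)$. Solving $D\varphi(x)v=0$ for the dependent coordinates produces a unique linear map $L$ with $v_{\rm dep}=L v_{\rm free}$, and $T$ is the graph of $L$ (after a coordinate permutation). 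The area formula then gives
\begin{equation*}
\mathcal H^{N-\widetilde N}\bigl(T\cap\operatorname{Box}_2(0,r)\bigr)=\sqrt{\det(I+L^{*}L)}\cdot\operatorname{Leb}_{N-\widetilde N}\bigl(D(r)\bigr),
\end{equation*}
with $D(r)=\{w:(w,Lw)\in\operatorname{Box}_2(0,r)\}$.

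\emph{Step 2 (triangular structure of $L$).} The block upper triangular form~\eqref{riemdiff} of $D\varphi(x)$ ensures that every column of degree $k$ has zero entries in rows of $\widetilde X$-degree greater than $k$. Combined with the greedy choice of dependent columns used in the proof of Lemma~\ref{sum}, back-substitution from the bottom row upward shows that $L$ itself is block triangular with respect to the degree filtration: each dependent coordinate of degree $k$ is a linear combination of free coordinates of degree $\ge k$ only.

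\emph{Step 3 (anisotropic rescaling and conclusion).} Rescale each block of free coordinates by $v_{F,k}=r^k u_{F,k}$; the Jacobian is $r^{\sum_k k(n_k-a_k)}=r^{\nu-\nu_0(x)}$. By Step 2, the constraint $\sum_{j:\operatorname{deg}X_j=k}v_j^2\le r^{2k}$ becomes
\begin{equation*}
\|u_{F,k}\|^2+\bigl\|A_{k,k}u_{F,k}+rA_{k,k+1}u_{F,k+1}+r^2A_{k,k+2}u_{F,k+2}+\cdots\bigr\|^2\le 1,
\end{equation*}
where the $A_{k,j}$ are the blocks of $L$. As $r\to 0$, each constraint converges uniformly on compacts to the ellipsoid $\langle u_{F,k},(I+A_{k,k}^{*}A_{k,k})u_{F,k}\rangle\le 1$, whose Lebesgue measure is the positive constant $\omega_{n_k-a_k}/\sqrt{\det(I+A_{k,k}^{*}A_{k,k})}$; a routine boundary perturbation bounds the discrepancy by $O(r)$. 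Assembling these pieces yields the asserted formula with
\begin{equation*}
C=\sqrt{\det(I+L^{*}L)}\cdot\prod_{k=1}^{M}\frac{\omega_{n_k-a_k}}{\sqrt{\det(I+A_{k,k}^{*}A_{k,k})}}.
\end{equation*}

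The hard part is the structural claim in Step 2 at characteristic points, where the greedy selection may include columns of degree $>\widetilde M$ and the matrix of dependent columns is no longer neatly square-block upper triangular; verifying that back-substitution still respects the degree filtration in the stated way requires careful bookkeeping of the pivots produced by the greedy algorithm. Once Step 2 is secured the remaining volume estimates in Step 3 are standard perturbation arguments.
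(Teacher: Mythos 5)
Your proposal takes a genuinely different route from the paper's. The paper constructs a block-lower-triangular basis $\{w_j\}$ of $\mathcal T=T_0[\psi^{-1}(t)]$, projects each $w_j$ onto its leading degree block to obtain $p_j$ and the plane $\mathcal S=\operatorname{span}\{p_j\}$, computes $\mathcal H^{N-\widetilde N}(\mathcal S\cap\operatorname{Box}_2(0,r))$ directly after an orthogonal change of basis that turns $\mathcal S$ into a coordinate plane, and then proves a two-sided $o(r)$-neighborhood comparison between $\pi_{\mathcal T}(\mathcal T\cap\operatorname{Box}_2(0,r))$ and $\mathcal S\cap\operatorname{Box}_2(0,r)$ (Steps V--VI of the paper's proof). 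You instead parametrize $\mathcal T$ as a graph over the free coordinates and compute by anisotropic rescaling, which is more explicit, avoids the neighborhood-comparison machinery, and produces a closed formula for the constant $C$ that the paper only extracts later (in the proof of Theorem~\ref{Leb_meas}). Both proofs ultimately rest on the same triangular-dependence structure of the tangent plane with respect to the degree filtration; yours makes that structure explicit in the map $L$.

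You rightly flag the weak point, and it is a real gap as written. Step~2's justification---``back-substitution from the bottom row upward''---does not work as stated at a characteristic point, because there the matrix $A_I$ of dependent columns is not square-block upper triangular in degree (the number of dependent columns of each degree no longer equals $\tilde n_k$), so a naive block back-substitution stalls. What actually secures the claim is the greedy/matroid exchange property of a minimum-degree-sum selection $I$: for every $k$, the columns of $I$ of degree $\leq k$ already span the same space as \emph{all} columns of degree $\leq k$ (if not, some column $e$ of degree $\leq k$ could be exchanged for a column $f\in I$ of higher degree via the circuit of $I\cup\{e\}$, lowering the total degree). Consequently a free column $a^j$ of degree $k_j$ is a linear combination of the dependent columns of degree $\leq k_j$; since $A_I$ is invertible that combination is the unique solution $w$ of $A_Iw=a^j$, so $w_i=0$ for every dependent $i$ of degree $>k_j$, which is precisely the asserted $L_{ij}=0$. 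With Step~2 so repaired, Step~3 is routine: the rescaled domains lie in a fixed ball, their indicators converge pointwise off a Lebesgue-null boundary to the indicator of the limiting product of ellipsoids, and dominated convergence yields the $(1+o(1))$ factor.
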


\begin{rem}\label{tangplane}
We emphasize that the mapping $\psi=\varphi\circ\theta_x$ acts on a neighborhood of the origin in $\mathbb R^N$. Consequently, the tangent plane to the level set $\psi^{-1}(t)$ lies in $\mathbb R^N$, and the intersection $T_0[\psi^{-1}(t)]\cap\operatorname{Box}_2(0,r)$ is well-defined.
\end{rem}

\begin{proof}[Proof of Theorem {\ref{tang_meas}}] We split the proof into 6
steps. On step~{\bf I} we
choose a suitable basis $\{w_j\}_{j=1}^N$ for the tangent space $\mathcal T=T_0[\psi^{-1}(t)]$
to the level set. On step~{\bf II} we define two
projections of the basis vectors in $T_0[\psi^{-1}(t)]$. In
particular, the first projection $\pi$ assigns to each basis
vector $w_j$ some vector $p_j$ of the same degree, and the second
projection assigns
a standard vector $\frac{\pi_j}{|\pi_j|}$ in $\{e_1,\ldots,e_{N}\}$ to each basis vector $w_j$ for $T_0[\psi^{-1}(t)]$.
On step~{\bf III} we show that
$\operatorname{rank}(\sigma_i\psi)_{i=1}^{\widetilde{N}}=\widetilde{N}$ (henceforth $\sigma_i\psi$ stands for the action of the vector $\sigma_i$ on $\psi$, $i=1,\ldots, N$), where
$$
\{\sigma_1,\ldots,\sigma_{\widetilde{N}}\}=\{e_1,\ldots,e_{N}\}\setminus\Bigl\{\frac{\pi_1}{|\pi_1|},\ldots,
\frac{\pi_{N-\widetilde{N}}}{|\pi_{N-\widetilde{N}}|}\Bigr\},
$$
and on step~{\bf IV} we prove that the sum of the degrees
of $\sigma_i$, $i=1,\ldots,\widetilde{N}$, coincides with $\nu_0(x)$.
Consequently, the sum of the degrees of $\pi_j$, $j=1,\ldots,N-\widetilde{N}$,
equals $\nu-\nu_0(x)$. Further, on step~{\bf V}
we deduce that the Lebesgue measure of
$\operatorname{Box}_2(0,r)\cap\operatorname{span}\{p_1,\ldots,p_{N-\widetilde{N}}\}$ equals
$Cr^{\nu-\nu_0(x)}$, where $C$ is independent of $r$. Finally, on
step~{\bf VI} we prove that the length of $\mathbb Rw_j\cap\operatorname{Box}_2(0,r)$ equals $O(r^{k(j)})$ for
sufficiently small $r>0$, and applying this result we show
that $\pi(\mathcal T\cap\operatorname{Box}_2(0,r))$ coincides with the
$o(r)$-neighborhood of $\mathcal S\cap\operatorname{Box}_2(0,r)$ in $\mathcal S$, where $\mathcal S=\operatorname{span}\{p_1,\ldots,p_{N-\widetilde{N}}\}$ and $o(r)$ is
taken with respect to the metric $d_2$. The theorem follows from
the last result.

{\bf Step I.} Consider the normal coordinates at the point~$x$.
Recall that $\mathcal T=T_0[\psi^{-1}(t)]$. Choose an arbitrary basis in $\mathcal T$, and write it as a
matrix in which the basis vectors are written as rows.

{\bf (i)} By elementary row transformations, reduce this matrix to
\begin{equation*}
\Delta=\left(%
\begin{array}{cccccccccc}
  * & \ldots & * & 0 &  &  & \ldots &  &  & 0 \\
  * & \ldots & * & * & 0 & \ldots & 0 &  & \ldots & 0 \\
  * &  & \ldots &  & * & * & 0 & 0 & \ldots & 0 \\
  \vdots  &  & \vdots & &  &  & \ddots & \ddots &  & 0 \\
  * & & & \ldots &  &  &  & * & 0 & 0 \\
  * & & & \ldots &  &  &  & * & * & * \\
\end{array}%
\right),
\end{equation*}
where the upper right triangle consists of zeroes, and the last nonzero entries of the rows appear in distinct columns.

{\bf (ii)} In the matrix $\Delta$ there is the following natural
``grading'' of the columns corresponding to the grading of the
Lie algebra $V$ of vector fields on the local tangent cone.
Split the columns of $\Delta$ into $M$ blocks ${\cal
B}_1,\ldots,{\cal B}_{M}$ such that ${\cal
B}_k$ consists of columns from $\dim H_{k-1}+1$
to $\dim H_k$:
$$
\begin{array}{cccccccccc}
  \quad\quad {}_1 & \ldots & {}_{\dim H_1} & {}_{\dim H_1+1} & \ldots & {}_{\dim H_2} & \ldots\ldots\ldots & {}_{\dim H_{M-1}+1} & \ldots & {}_N\quad\quad \\
  \Bigl( & \mathcal B_1 & & & \mathcal B_2 &  & \ldots\ldots\ldots &  & \mathcal B_M & \Bigr). \\
\end{array}%
$$

Next, there is also a ``grading'' of rows of $\Delta$. There are
$M$ blocks ${\cal A}_{1},\ldots,{\cal A}_{M}$. Here ${\cal A}_l$ consists of the rows whose
last nonzero element appears in a column with index in $[\dim H_{l-1}+1,\dim H_l]$:
$$
\left(%
\begin{array}{c}
\mathcal A_1\\
\\
\mathcal A_2\\
\vdots\\
\\
\mathcal A_M
\end{array}%
\right)
=\left(%
\begin{array}{cccccccccc}
 {}^1 & \cdots & {}^{\dim H_1} & {}^{\dim H_1+1} & \cdots & {}^{\dim H_2} & \cdots & {}^{\dim H_{M-1}+1} & \cdots & {}^N\\
  & \ldots & * & 0 &  &  & \ldots &  &  & 0 \\
  * & \ldots & * & * & 0 & \ldots & 0 &  & \ldots & 0 \\
  * &  & \ldots &  & * & * & 0 & 0 & \ldots & 0 \\
  \vdots  &  & \vdots & &  &  & \ddots & \ddots &  & 0 \\
  * & & & \ldots &  &  &  & * & 0 & 0 \\
  * & & & \ldots &  &  &  & * & * & * \\
\end{array}%
\right).
$$
Some of these blocks can be empty.

{\bf (iii)} For each $k=1,\ldots,M$, put ${\cal V}_{k}={\cal
B}_{k}\cap \mathcal A_k$
$$
\left(%
\begin{array}{cccccccccc}
  & {\cal V}_1  & 0 & 0 &  &  & \ldots &  &  & 0 \\
*  &*  &  & {\cal V}_2 & 0 & \ldots & 0 &  & \ldots & 0 \\
  * &  & * &  & * &  & 0 & 0 & \ldots & 0 \\
  \vdots  & \mathcal B_1\setminus{\cal V}_1 & \vdots & \mathcal B_2\setminus{\cal V}_2 & * &  & \ddots & \ddots &  & 0 \\
  * & &  &  & \vdots &  & \vdots & * & {\cal V}_{M-1} & 0 \\
  * & & * &  & * &  & * &  & \mathcal B_{M-1}\setminus{\cal V}_{M-1} &{\cal V}_{M} \\
\end{array}%
\right).
$$

Further, we transform the blocks $\mathcal B_k\setminus\mathcal
V_k$, $k=1,\ldots,M$.  Define the transformation of $\Delta$ by
induction. For $k=M$, we have nothing to transform; thus, the base of induction holds.

Suppose that we have transformed the
blocks $\mathcal B_k\setminus\mathcal V_k$, $k=l+1,\ldots,M$,
$l\leq M-1$, and assume that $\mathcal V_l\neq\varnothing$
(otherwise, we have nothing to transform). Replace the blocks
${\cal A}_{l+1},\ldots,{\cal A}_{M}$ by the projections of their
row vectors onto  $(\operatorname{span}\{\mathcal
A_l\})^{\bot}\cap\operatorname{span}\{{\cal A}_l,\ldots,{\cal A}_{M}\}$. Roughly speaking, we remove the part collinear to  $\mathcal A_l$ from the row vectors of ${\cal A}_{l+1},\ldots,{\cal A}_{M}$. This projection preserves the blocks $\mathcal
B_k\setminus\mathcal V_k$, $k=l+1,\ldots,M$, $l\leq M-1$, because of the ``triangular'' structure of $\Delta$.

Moreover, the rows of $\mathcal B_{k}\setminus{\cal V}_{k}$
are orthogonal to $\mathcal V_k$ with respect to the classical inner product.

Denote the resulting vectors by $w_1,\ldots, w_{N-\widetilde{N}}$:
\begin{equation}\label{matw}
W=
\left(%
\begin{array}{cccccccccc}
  & {\cal V}_1  & 0 & 0 &  &  & \ldots &  &  & 0 \\
*  &*  &  & {\cal V}_2 & 0 & \ldots & 0 &  & \ldots & 0 \\
  * &  & * &  & * &  & 0 & 0 & \ldots & 0 \\
  \vdots  & {\cal V}_1^{\bot} & \vdots & {\cal V}_2^{\bot} & * &  & \ddots & \ddots &  & 0 \\
  * & &  &  & \vdots &  & \vdots & * & {\cal V}_{M-1} & 0 \\
  * & & * &  & * &  & * &  & {\cal V}_{M-1}^{\bot} &{\cal V}_{M} \\
\end{array}%
\right)
=\left(%
\begin{array}{c}
  w_1 \\
   \\
  \vdots \\
  \vdots \\
   \\
  w_{N-\widetilde{N}} \\
\end{array}%
\right).
\end{equation}

Thus, we have constructed a ``suitable'' basis for $\mathcal T$.

{\bf Step II.} For each $j=1,\ldots, N-\widetilde{N}$, define the number
$k(j)$ as follows. Let $l(j)$ be the row index of the last
nonzero element of $w_j$ in {\eqref{matw}}, and put $k(j)=\operatorname{deg}
X_{l(j)}$.

Project each $w_j$ onto $\mathcal V_{k(j)}$ by taking the last
nonzero coordinates belonging to $\mathcal V_{k(j)}$, and denote
this projection by $p_j=\pi_{\mathcal T}(w_j)$.

By the properties of $W$, this projection is orthogonal with respect to the standard Riemannian metric on $\mathbb R^N$: for each vector $w_j\in\mathcal T$ we have $(w_j-\pi_{\mathcal T}(w_j))\bot\pi_{\mathcal T}(\mathcal T)$.
Indeed, expand
$$w_j=p_j+q_j=\pi_{\mathcal T}(w_j)+q_j
$$
and verify that
$q_j$ is orthogonal to $\pi_{\mathcal T}(\mathcal T)$. Take $u\in\pi_{\mathcal T}(\mathcal T)$. First,
suppose that $u$ is a ``basis'' vector in this image:
$u$ is one of the row-vectors of the block ${\cal V}_k$ for some $1\leq k\leq M$. If $p_j\in{\cal
V}_l$ and $l>k$ then $q_j\bot u$ by choice of $W$. If
$l\leq k$ then obviously $q_j\bot u$ since $q_j$ and $u$ have no
nonzero coordinates with the same indices. Since $j$ is an
arbitrary number and $u$ is an arbitrary basis vector, it follows
that all $q_j$ are orthogonal to $\pi_{\mathcal T}(\mathcal T)$. Consequently,
$\operatorname{span}\{q_j, j=1,\ldots,N-\widetilde{N}\}\bot\pi_{\mathcal T}(\mathcal T)$.

Denote by $\pi_j$ the vector obtained from $w_j$ by putting
$$
\begin{cases}
(\pi_j)_{l(j)}&=(w_j)_{l(j)},\\
(\pi_j)_k&=0\text{ for }k\neq l(j).
\end{cases}
$$
Put $\operatorname{deg}\pi_j=k(j)$. By construction, the sum of the degrees of
$\pi_1,\ldots, \pi_{N-\widetilde{N}}$ equals the sum of degrees of the
tangent vectors $(\theta_x^{-1})_*\langle w_1\rangle,\ldots,(\theta_x^{-1})_*\langle w_{N-\widetilde{N}}\rangle$.

{\bf Step III.} Consider the vectors
\begin{equation}\label{bsigma}
\{\sigma_1,\ldots,\sigma_{\widetilde{N}}\}=\{e_1,\ldots,e_{N}\}\setminus\Bigl\{\frac{\pi_1}{|\pi_1|},\ldots,
\frac{\pi_{N-\widetilde{N}}}{|\pi_{N-\widetilde{N}}|}\Bigr\}
\end{equation}
 of the standard basis in $\mathbb R^N$.
Since $\theta_x$ is a diffeomorphism, we have
$$
\operatorname{rank}(([(\theta_x)_*(0)\langle\sigma_i\rangle]\varphi)(x))_{i=1}^{\widetilde{N}}=\widetilde{N}\iff\operatorname{rank}([\sigma_i\psi](0))_{i=1}^{\widetilde{N}}=\widetilde{N}.
$$
Here the symbol $(\theta_x)_*(0)\langle\sigma_i\rangle=D\theta_x(0)\langle\sigma_i\rangle$ stands for the action of the differential of $\theta_x$ at $0$ on the vector $\sigma_i$, $i=1,\ldots,\widetilde{N}$.

Verify that the rank of $([\sigma_i\psi](0))_{i=1}^{\widetilde{N}}$ equals
$\widetilde{N}$. Indeed, assume the contrary. Then $\ker D\psi(0)\cap\operatorname{span}\{\sigma_1,\ldots,\sigma_{\widetilde{N}}\}\neq0$, and therefore
$$
\dim \mathcal T\cap\operatorname{span}\{\sigma_1,\ldots,\sigma_{\widetilde{N}}\}\geq 1.
$$
Consider the coordinates of $w\in
\mathcal T\cap\operatorname{span}\{\sigma_1,\ldots,\sigma_{\widetilde{N}}\}$. On the one hand, since
$w\in \mathcal T$, the choice of the matrix $W$ implies that we can
expand $w$ as
$$
w=\sum\limits_{i=1}^{N-\widetilde{N}}a_iw_i.
$$
Put $i_0=\max\{i:a_i\neq0\}$. Since $w_i=(w_i-\pi_i)+\pi_i$, it follows that
$$
w=\sum\limits_{i=1}^{N-\widetilde{N}}a_i(w_i-\pi_i)+a_i\pi_i
=\Bigl(\sum\limits_{i=1}^{i_0-1}a_i[(w_i-\pi_i)+\pi_i]+a_{i_0}(w_{i_0}-\pi_{i_0})\Bigr)+a_{i_0}\pi_{i_0}.
$$
Recall that by the choice of $\{\pi_i\}_{i=1}^{N-\widetilde{N}}$, the only nonzero coordinate of $\pi_{i_0}$ has index$l(i_0)$, and the nonzero coordinates of
$$\Bigl(\sum\limits_{i=1}^{i_0-1}a_i[(w_i-\pi_i)+\pi_i]+a_{i_0}(w_{i_0}-\pi_{i_0})\Bigr)$$
have indices strictly less than~$l(i_0)$.
On the other hand, since
$w\in\operatorname{span}\{\sigma_1,\ldots,\sigma_{\widetilde{N}}\}$ it follows from
{\eqref{bsigma}} that all coefficients of
$\pi_1,\ldots, \pi_{N-\widetilde{N}}$ in this expansion must vanish. Consequently, $a_{i_0}=0$. Thus,
we arrive at a contradiction.

{\bf Step IV.} Verify that the sum of degrees of basis vector fields $\{X_{i_j}\}_{j=1}^{\widetilde{N}}$ such that $X_{i_j}(x)=(\theta_x)_*(0)\langle\sigma_j\rangle$, $j=1,\ldots,\widetilde{N}$, equals $\nu_0(x)$.

{\bf (i)} Assume on the contrary that there exist standard vectors
$\delta_1,\ldots,\delta_{\widetilde{N}}$ with
\begin{equation}\label{nu0}
\nu_0(x)=\sum\limits_{i=1}^{\widetilde{N}}\operatorname{deg}\delta_i<\sum\limits_{i=1}^{\widetilde{N}}\operatorname{deg}\sigma_i
\end{equation}
and $\operatorname{rank}(\delta_i\psi)_{i=1}^{\widetilde{N}}=\widetilde{N}$ (here $\operatorname{deg}\delta_i$ stands for the value $\operatorname{deg}[(\theta_x)_*(0)\langle\delta_i\rangle]$, $i=1,\ldots,\widetilde{N}$). Since
$\{\delta_1,\ldots,\delta_{\widetilde{N}}\}\neq\{\sigma_1,\ldots,\sigma_{\widetilde{N}}\}$, there exists at least one $\frac{\pi_j}{|\pi_j|}=\delta_k$
for some $j$ and $k$. Consequently, there exists a vector $v$ satisfying $v+\frac{\pi_j}{|\pi_j|}\in \mathcal T$. Moreover, the index of the last nonzero
component of $v$ is strictly less than $l(j)$ (see the matrix $W$ of {\eqref{matw}}).

{\bf (ii)} Consider the chosen vector $v$. It is evident that
$v\notin \mathcal T$ (otherwise, $\pi_j\in \mathcal T$ and $\delta_k\in \mathcal T$, which is impossible). Observe that $\operatorname{rank} ({\tau_i}\psi)_{i=1}^{\widetilde{N}}<\widetilde{N}$, where
$\tau_k=\frac{1}{2}\bigl(v+ \frac{\pi_j}{|\pi_j|}\bigr)\in \mathcal T$, and $\tau_i=\delta_i$ for all $i\neq k$.
Consequently, $\operatorname{rank} ({\lambda_i}\psi)_{i=1}^{\widetilde{N}}=\widetilde{N}$, where
$\lambda_k=v$ and $\lambda_i=\delta_i$ for all $i\neq k$. Indeed,
assume on the contrary that $\operatorname{rank}
({\lambda_i}\psi)_{i=1}^{\widetilde{N}}<\widetilde{N}$. Then
\begin{equation}\label{matrices}
(\delta_i\psi)_{i=1}^{\widetilde{N}}=2(\tau_i\psi)_{i=1}^{\widetilde{N}}-(\lambda_i\psi)_{i=1}^{\widetilde{N}}.
\end{equation}
The assumption implies that
$v\psi=\sum\limits_{i=1}^{\widetilde{N}}a_i[\delta_i\psi]$ because $\operatorname{rank}(\delta_i\psi)_{i=1}^{\widetilde{N}}=\widetilde{N}$. Since $2\tau_k\psi=\bigl(v+ \frac{\pi_j}{|\pi_j|}\bigr)\psi=0$, it
follows from {\eqref{matrices}} that the column $k$ of $(\delta_i\psi)_{i=1}^{\widetilde{N}}$ is
equal to $-v\psi=-\sum\limits_{i=1}^{\widetilde{N}}a_i[\delta_i\psi]$, and,
consequently, $\operatorname{rank}(\delta_i\psi)_{i=1}^{\widetilde{N}}<\widetilde{N}$. Thus, we arrive at a contradiction.

Since $\operatorname{rank}(\lambda_i\psi)_{i=1}^{\widetilde{N}}=\widetilde{N}$, assuming the
contrary and applying similar arguments, we
infer that there exists at least one coordinate $(v)_j$ of $v$
such that $\operatorname{rank}(\mu_i\psi)_{i=1}^{\widetilde{N}}=\widetilde{N}$, where
$\mu_k=e_j$ and $\mu_i=\delta_i$ for all $i\neq k$.

{\bf (iii)} Put $$
l=\min\{j:\operatorname{rank}(\mu_i\psi)_{i=1}^{\widetilde{N}}=\widetilde{N},\text{ where
}\mu_k=e_j,\ \mu_i=\delta_i\text{ for }i\neq k\}.
$$ Note that
$l$ is strictly less than $l(j)$ (the index of the only
nonzero coordinate of $\pi_j$). If $\operatorname{deg}e_l<\operatorname{deg}\pi_j$ then
we obtain a contradiction with the assumption that
$\sum\limits_{i=1}^{\widetilde{N}}\operatorname{deg}\delta_i=\nu_0(x)$. If
$\operatorname{deg}e_l=\operatorname{deg}\pi_j$ then we consider the collection
$\{\mu_i\}_{i=1}^{\widetilde{N}}$ instead of $\{{\delta}_i\}_{i=1}^{\widetilde{N}}$,
and repeat the previous arguments.

{\bf (iv)} Observe that after finitely many iterations of the arguments described in substeps {\bf (ii)--(iii)}, we obtain a collection satisfying
$$
\sum\limits_{i=1}^{\widetilde{N}}\operatorname{deg}\mu_i<\sum\limits_{i=1}^{\widetilde{N}}\operatorname{deg}\delta_i=\nu_0(x)
$$
(here $\operatorname{deg}\mu_i$ stands for the value $\operatorname{deg}[(\theta_x)_*(0)\langle\mu_i\rangle]$, $i=1,\ldots,\widetilde{N}$). This contradicts to {\eqref{nu0}}.

{\bf Step V.} {\bf (i)} Denote by $\mathcal S$ the plane
$\operatorname{span}\{p_1,\ldots,p_{N-\widetilde{N}}\}$, where the vectors
$p_1,\ldots,p_{N-\widetilde{N}}$ are defined in Step {\bf II}, and
consider $\mathcal S\cap\operatorname{Box}_2(0,r)$. Without
loss of generality we may assume that $p_1,\ldots,p_{N-\widetilde{N}}$ are orthonormal (it suffices to carry out the procedure described in Step~{\bf II} within each $\mathcal V_i$, $i=1,\ldots, M$).
Verify that this intersection equals a direct product of balls
of radii $r^k$, $k=1,\ldots, M$, in some basis.

To this end, we consider a new orthonormal basis $\mathcal Oe_1,\ldots, \mathcal Oe_N$ in $\mathbb
R^{N}$, where the orthogonal
transformation $\mathcal O$ satisfies $\mathcal O(\operatorname{span}\{\mathcal V_i\})=\operatorname{span}\{\mathcal V_i\}$, $i=1,\ldots, M$, and the image of each $p_j$ is the standard basis
vector $e_{i_j}$, $j=1,\ldots,N-\widetilde{N}$. Then the $d_2$-ball in the initial basis coincides
with the one in the new metric: ${\cal
O}(\operatorname{Box}_2(0,r))=\operatorname{Box}_2(0,r)$. Thus, we have constructed a
basis for $\mathcal S$ consisting of $N-\widetilde{N}$ standard basis vectors.

Next, ${\cal O}(\operatorname{Box}_2(0,r)\cap \mathcal S)=\operatorname{Box}_2(0,r)\cap{\cal
O}(\mathcal S)=\operatorname{Box}_2^{{\cal O}(\mathcal S)}(0,r)$, where the latter
is a ball in the metric ${d_2|}_{{\cal O}(\mathcal S)}$. By the choice of ${\cal O}$, it equals
a direct product of Euclidean balls.

{\bf (ii)} Obviously, the $(N-\widetilde{N})$-dimensional Lebesgue measure
of $\operatorname{Box}_2^{{\cal O}(\mathcal S)}(0,r)$ equals $Cr^{\nu-\nu_0(x)}$, where
$C$ depends only on $x$. Since
${\cal O}$ is an orthogonal transformation, so is the inverse mapping ${\cal
O}^{-1}$, and the $(N-\widetilde{N})$-dimensional
Lebesgue measure of $\operatorname{Box}_2(0,r)\cap \mathcal S={\cal O}^{-1}(\operatorname{Box}_2^{{\cal
O}(\mathcal S)}(0,r))$ is the same; thus, it also equals
$Cr^{\nu-\nu_0(x)}$.

Observe that at regular points the $(N-\widetilde{N})$-dimensional Lebesgue
measure of $\mathcal S\cap\operatorname{Box}_2(0,r)$ equals
$\prod\limits_{k=1}^{M}\omega_{n_k-\tilde{n}_k}r^{\nu-\tilde{\nu}}$.

{\bf Step VI.} On this step we show that there exists $r_0>0$ such that for $r\leq r_0$ the set
$$
\pi_{\mathcal T}(\mathcal T\cap\operatorname{Box}_2(0,r))
$$
coincides with the $o(r)$-neighborhood of $\mathcal S\cap\operatorname{Box}_2(0,r)$ in
$\mathcal S$, where $o(r)$ is taken with respect to the metric $d_2$. From
this, we deduce that the ${\cal H}^{N-\widetilde{N}}$-measure
distortion of $\pi_{\mathcal T}$ equals
$$
\lim\limits_{r\to0}\frac{{\cal H}^{N-\widetilde{N}}(\pi_{\mathcal T}(\mathcal T\cap
\operatorname{Box}_2(0,r)))}{{\cal
H}^{N-\widetilde{N}}(\mathcal T\cap\operatorname{Box}_2(0,r))}=\lim\limits_{r\to0}\frac{{\cal
H}^{N-\widetilde{N}}(\mathcal S\cap\operatorname{Box}_2(0,r))}{{\cal
H}^{N-\widetilde{N}}(\mathcal T\cap\operatorname{Box}_2(0,r))}.
$$

{\bf (i)} Fix a vector $w_j$. Show that there exists ${r_0}_j>0$ such that the length of $\mathbb Rw_j\cap\operatorname{Box}_2(0,r)$ equals $O(r^{k(j)})$ for
all $r\in (0,{r_0}_j)$. Indeed, fix ${r_0}_j>0$ and a take point
$\bar{u}\in\mathbb R_+w_j\cap\partial\operatorname{Box}_2(x,{r_0}_j)$ and the corresponding vector $u$ whose coordinates coincide with those of  $\bar{u}$. Here
$$
\mathbb R_+w_j=\{z\in\mathbb R^N: z=aw_j, a\in\mathbb R_+\}.
$$
Then
$u=\sum\limits_{k=1}^{k(j)}u_k$, where $$ u_k=u_k({r_0}_j)\in
\operatorname{span}\{\mathcal V_k\},
$$ and
$d_2(0,\bar{u})={r_0}_j$. Consequently, $|u_k({r_0}_j)|\leq ({r_0}_j)^k$; here
$|\cdot|$ denotes the Euclidean length.

We verify that $|u_k(r)|=O(r^{k(j)})$ for all $r\in(0,{r_0}_j)$, $k=1,\ldots, k(j)$. The vector $u(r)=\sum\limits_{k=1}^{k(j)}u_k(r)$ has the same coordinates as the point $\mathbb R_+w_j\cap\partial\operatorname{Box}_2(0,r)$. Assume on the contrary that for every ${r_0}_j$ and for every $K<\infty$ there exist $r< {r_0}_j$ and $u_l(r)$ with
$$
|u_l(r)|>Kr^{k(j)}.
$$
Fix such $r_0, K$ and $r<r_0$, and put $\alpha_r=\frac{|u|}{|u(r)|}$. Then we can represent $u(r)$ as
$$u(r)=\alpha_ru=\sum\limits_{k=1}^{k(j)}\alpha_ru_k=\sum\limits_{k=1}^{k(j)}u_k(r).
$$
On the one hand,
$Kr^{k(j)}<|u_l(r)|=|\alpha_ru_l|=|\alpha_r||u_l|$. Therefore,
$$|\alpha_r|>K\frac{r^{k(j)}}{|u_l|}.$$

On the other hand, $k=k(j)$, in view of its definition, satisfies
$|u_{k(j)}(r)|\leq Lr^{k(j)}$ for all $r\in(0,r_0]$. Since
$u_{k(j)}(r)=\alpha_r\cdot u_{k(j)}$, we have
$$
Lr^{k(j)}\geq|u_{k(j)}(r)|=|\alpha_r|\cdot
|u_{k(j)}|>K{r^{k(j)}}\frac{|u_{k(j)}|}{|u_l|}.
$$
Observe that $\frac{|u_{k(j)}|}{|u_l|}$ depends only on $w_j$, and is independent of $r>0$. Obviously, the equality
$$
Lr^{k(j)}>Kr^{k(j)}
$$
is violated for $K>L$. Thus we arrive at a contradiction. Consequently, $|u_k|=O(r^{k(j)})$,
$r\in(0,r_0)$, for all $k=1,\ldots,k(j)$.

{\bf (ii)} Consider $\mathcal T\cap\operatorname{Box}_2(0,r)$, and the
projection $\pi_{\mathcal T}:\mathcal T\to \mathcal S$. By the properties of the matrix
$W$ (see {\eqref{matw}}), it is bijective. Verify that
\begin{equation}\label{distboundary}
d_2(0,\partial[\pi_{\mathcal T}(\mathcal T\cap\operatorname{Box}_2(0,r))])=r(1+o(1)),
\end{equation}
where $\partial$ stands for the boundary relative to the plane $\mathcal T$.
First, consider the basis elements $w$ in $\mathcal T$. Represent each of them as
$w=y+z$, where $\pi_{\mathcal T}(y+z)=y$. The properties of $\pi_{\mathcal T}$ yield
$\operatorname{deg} z<\operatorname{deg} y$, where the degree of the vector
$u=\sum\limits_{i=1}^{\widetilde{N}}u_ie_i$ is defined as
$$
\operatorname{deg}
u=\operatorname{deg}\Bigl(\sum\limits_{i=1}^{\widetilde{N}}u_iX_i\Bigr)=\max\limits_{i=1,\ldots,N}\{\operatorname{deg}
X_i:u_i\neq0\}.
$$
If $w\in \mathcal T$ is not a basis vector, then we expand it in the basis
$\{w_i\}_{i=1}^{N-\widetilde{N}}$ as
$$
w=\sum\limits_{k=1}^{N-\widetilde{N}}a_kw_k=\sum\limits_{k=1}^{N-\widetilde{N}}a_ky_k+\sum\limits_{k=1}^{N-\widetilde{N}}a_kz_k,
$$
where $\pi_{\mathcal T}(w_k)=y_k$, $k=1,\ldots,N-\widetilde{N}$. Next, it
suffices to note that
$$
\operatorname{deg}\Bigl(\sum\limits_{k=1}^{N-\widetilde{N}}a_ky_k\Bigr)>\operatorname{deg}\Bigl(\sum\limits_{k=1}^{N-\widetilde{N}}a_kz_k\Bigr).
$$

Thus, we can represent every $w\in \mathcal T$ as $w=y+z$, where
$\pi_{\mathcal T}(w)=y$ and $\operatorname{deg} y>\operatorname{deg} z$.

Define a new quasimetric ${d_2^0}_E$ on $\operatorname{Box}_2(0,r)$. For
$v,w\in\operatorname{Box}_2(0,r)$ put ${d_2^0}_E(v,w)=d_2(0,w-v)$, where $w-v$
denotes the Euclidean difference. This definition implies that
$\operatorname{Box}_{2}(0,r)$ coincides with the ball $\operatorname{Box}_{2E}(0,r)$ of radius $r$ centered at
$0$ in the metric ${d_2^0}_E$. Consequently, for proving {\eqref{distboundary}}, it suffices to show that
$$
{d_2^0}_E(0,\partial[\pi_{\mathcal T}(\mathcal T\cap\operatorname{Box}_2(0,r))])=r(1+o(1)),
$$
where $\partial$ stands for the boundary relative to the plane $\pi_{\mathcal T}(\mathcal T)$.
Take $w=y+z$ with $d_2(0,w)=r$ and $\operatorname{deg} y>\operatorname{deg} z$. Then
$$
d_2(0,y+z)={d_2^0}_E(0,y+z)=r,
$$
$\rho(0,y+z)=O(r^{\operatorname{deg} y})$ (see Step {\bf V}, substep {\bf (i)}),
and $\rho(0,y)=O(r^{\operatorname{deg} y})$ since $\pi_{\mathcal T}$ is a linear mapping.
Then, we have $\rho(y,y+z)=O(r^{\operatorname{deg} y})$ because $y\bot z$.

Observe that ${d_2^0}_E(y,y+z)=d_2(0,z)$. Represent $z$ as $z=\sum\limits_{i=1}^{I(z)}z_i$, where
$z_i\in\mathcal V_i$.
Here $\operatorname{deg} z_{I(z)}=I(z)<\operatorname{deg} y$ because $\operatorname{deg} z<\operatorname{deg} y$. Since
$$
O(r^{\operatorname{deg}
y})=\rho(y,y+z)\sim\max\limits_{\substack{i=1,\ldots,I(z)\\j=1,\ldots,\dim
H_i/H_{i-1}}}\{|(z_i)_j|\},
$$
it follows that $|z_i|\leq O(r^{\operatorname{deg} y})$.

This implies that
\begin{multline*}
d_2(0,z)=\max\limits_{i=1,\ldots,I(z)}\{|z_i|^{\frac{1}{i}}\}\leq
\max\limits_{i=1,\ldots,I(z)}\{|z_i|^{\frac{1}{I(z)}}\}\\
\leq O(r^{\frac{\operatorname{deg} y}{I(z)}})\leq O(r^{1+\frac{1}{M-1}})=r\cdot
o(1),
\end{multline*}
where $o(1)$ is at most ${\cal C}r^{\frac{1}{M-1}}$ for
some $0<{\cal C}<\infty$ independent of the point of $\operatorname{Box}_2(0,r)$.

Proposition {\ref{prop21vg}} implies that
${d_2^0}_E(v,w)\leq{d_2^0}_E(v,u)+c{d_2^0}_E(u,w)$ for
$u,v,w\in\operatorname{Box}_2(0,r)$ and some $c>0$ (it suffices to put
$\widehat{F}^i_{\mu,\beta}(x)\equiv0$ in Proposition {\ref{prop21vg}}).
Consequently,
$$d_2(0,y)={d_2^0}_E(0,y)\leq{d_2^0}_E(0,z+y)+c{d_2^0}_E(y,z+y)=r(1+o(1)),
$$ and
{\eqref{distboundary}} follows.

Thus, the measure of $\mathcal T\cap\operatorname{Box}_2(x,r)$ is equivalent to
$Cr^{\nu-\nu_0(x)}$ as $r\to0$. The theorem follows.
\end{proof}

\begin{defn} Let $\xi:\mathbb M\to\widetilde{\mathbb M}$ be a mapping of two Carnot
manifolds. Fix $x\in\mathbb M$. The $d_2$-{\it distortion of $\xi$ at $y$ with
respect to} $x$ equals
$$
\frac{d_2(\xi(y),\xi(x))}{d_2(y,x)},
$$
and the $\rho$-distortion equals
$$
\frac{\rho(\xi(y),\xi(x))}{\rho(y,x)}.
$$
\end{defn}

\medskip
To simplify notation, we denote the Gram determinant $\sqrt{\det(AA^*)}$
for a matrix~$A$ by $\mathcal D(A)$. We denote the Gram determinant $\sqrt{\det(B^*B)}$ for a matrix~$B$ by $\widetilde{\mathcal D}(B)$.

\begin{prop} The matrix of the differential of the mapping $\theta_x$, $x\in\mathbb M$,
at $0$ equals the identity matrix.
\end{prop}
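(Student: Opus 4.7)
The plan is to compute $D\theta_x(0)$ directly from the definition by applying it to each standard basis vector $e_i$ of $\mathbb R^N = T_0\mathbb R^N$. The key observation is that the proposition implicitly fixes a basis on $T_x\mathbb M$: since throughout the paper we identify $T_x\mathbb M$ with $\mathbb R^N$ via the frame $\{X_1(x),\ldots,X_N(x)\}$, the claim that ``the matrix equals the identity'' means precisely that $D\theta_x(0)\langle e_i\rangle = X_i(x)$ for each $i=1,\ldots,N$.

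To verify this, I would consider the curve $\gamma_i:t\mapsto\theta_x(te_i)$ based at $\gamma_i(0)=x$. By the very definition of $\theta_x$ in Definition~\ref{cs1kind}, namely
$$
\theta_x(te_i) = \exp\Bigl(\sum_{j=1}^N (te_i)_j X_j\Bigr)(x) = \exp(tX_i)(x),
$$
the curve $\gamma_i$ is the integral curve of the vector field $X_i$ starting at $x$. Hence
$$
D\theta_x(0)\langle e_i\rangle = \dot{\gamma}_i(0) = X_i(x),
$$
which yields the desired identification.

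Since this holds for every $i$, the matrix of $D\theta_x(0)$ written in the standard basis $\{e_i\}_{i=1}^N$ on the domain and in the basis $\{X_i(x)\}_{i=1}^N$ on $T_x\mathbb M$ is the $N\times N$ identity. There is no real obstacle here: the statement is a direct consequence of the fundamental property of the exponential map of a vector field, namely that $\frac{d}{dt}\bigr|_{t=0}\exp(tX_i)(x)=X_i(x)$. The only conceptual point worth recording is the explicit choice of frame on the target, which is the convention adopted throughout the paper and is what makes the matrix literally equal the identity rather than merely an invertible matrix.
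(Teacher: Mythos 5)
Your proof is correct. The paper states this proposition without any proof (it is treated as an obvious consequence of the definition of $\theta_x$), and your argument---differentiating the restriction of $\theta_x$ along each coordinate direction, observing $t\mapsto\theta_x(te_i)=\exp(tX_i)(x)$ is the integral curve of $X_i$, and hence $D\theta_x(0)\langle e_i\rangle = X_i(x)$---is exactly the natural verification, together with the right remark that ``identity matrix'' is meaningful only after fixing the frame $\{X_i(x)\}_{i=1}^N$ on $T_x\mathbb M$.
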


Henceforth we denote the Riemann tensor at $y$ by $g(y)$.

\begin{thm}\label{Leb_meas} Suppose that $x\in\varphi^{-1}(t)$ is a
regular point. Then

{\bf(I)} In a neighborhood of
$0=\theta_x^{-1}(x)$ there exists a mapping from
$T_0[\psi^{-1}(t)]\cap\operatorname{Box}_2(0,r(1+o(1)))$ to
$\psi^{-1}(t)\cap\operatorname{Box}_2(0,r)$ such that both $d_2$- and
$\rho$-distortions with respect to $0$ are equal to $1+o(1)$ at every $y\in T_0[\psi^{-1}(t)]\cap\operatorname{Box}_2(0,r(1+o(1)))$, where
$o(1)$ is uniform in $x=\theta_x(0)\in\mathcal U\Subset\mathbb M$ and in $y\in T_0[\psi^{-1}(t)]\cap\operatorname{Box}_2(0,r(1+o(1)))$.

{\bf (II)} The ${\cal H}^{N-\widetilde{N}}$-measure of
$\varphi^{-1}(t)\cap\operatorname{Box}_2(x,r)$ equals
$$
\widetilde{\mathcal D}(g|_{\ker
D\varphi(x)})\cdot\prod\limits_{k=1}^{M}\omega_{n_k-\tilde{n}_k}\cdot
\frac{\mathcal D({D}\varphi(x))}{\mathcal D(\widehat{D}\varphi(x))}r^{\nu-\tilde{\nu}}(1+o(1))
$$
with $o(1)\to0$ as $r\to0$, where $o(1)\to0$ uniformly in $x\in\mathcal U\Subset\mathbb M$.
\end{thm}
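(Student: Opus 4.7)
The plan is to derive (II) from (I) by combining a $d_2$-bi-Lipschitz reduction of the level set to its tangent plane with the calculation of Theorem \ref{tang_meas}. I sketch each part in turn.

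For (I), since $x$ is regular we have $\operatorname{rank} D\psi(0) = \widetilde N$, so the classical implicit function theorem will produce a smooth parametrization of $\psi^{-1}(t)$ near $0$ as the graph of a map over $\mathcal T = T_0[\psi^{-1}(t)] = \ker D\psi(0)$. Let $\Psi$ denote the right-inverse of $\pi_{\mathcal T}|_{\psi^{-1}(t)}$. Writing $\Psi(y) - y = \sum_{i=1}^{\widetilde N} c_i(y)\sigma_i$, where $\{\sigma_i\}$ is the complementary basis from Step III of the previous proof (for which $\sum_{i}\operatorname{deg}\sigma_i = \tilde\nu$ at regular points), the task reduces to proving the layer-wise estimate
\[ |c_i(y)| = o\bigl(d_2(0,y)^{\operatorname{deg}\sigma_i}\bigr) \quad \text{as } y\to 0. \]
This is substantially stronger than the $o(|y|)$-bound given by plain $C^1$-smoothness. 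My plan is to obtain it by Taylor-expanding the equation $\psi(y + \sum c_i\sigma_i) \equiv t$ order-by-order in the grading, using that each mixed horizontal derivative of total degree at most $\operatorname{deg}\sigma_i$ carries the correct sub-Riemannian weight; the block-triangular structure \eqref{riemdiff} of $D\varphi$, combined with the full rank of $\widehat D\varphi(x)$ at a regular point, should allow one to solve for $c_i$ recursively starting from the lowest-degree layer. This is precisely where the $C^{M+1}$-hypothesis of Assumption \ref{assumpphi} will be invoked when the characteristic set is nonnegligible. Once the estimate is in hand, $d_2(y, \Psi(y)) = \max_i |c_i(y)|^{1/\operatorname{deg}\sigma_i} = o(d_2(0,y))$, and the quasi-triangle inequality of Proposition \ref{prop21vg} yields both the $d_2$- and the $\rho$-distortion equal to $1+o(1)$. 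Uniformity in $x \in \mathcal U \Subset \mathbb M$ and in $y$ will follow from the continuous dependence of the Taylor data on $x$.

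For (II), I would combine the $d_2$-bi-Lipschitz parametrization $\Psi$ from (I) with Theorem \ref{tang_meas} to write
\[ \mathcal H^{N-\widetilde N}\bigl(\psi^{-1}(t)\cap\operatorname{Box}_2(0,r)\bigr) = \mathcal H^{N-\widetilde N}\bigl(\mathcal T\cap\operatorname{Box}_2(0,r)\bigr)(1+o(1)), \]
and then compute the right-hand side in two stages: first, Theorem \ref{tang_meas} at a regular point gives $\mathcal H^{N-\widetilde N}(\mathcal S\cap\operatorname{Box}_2(0,r)) = \prod_{k=1}^M\omega_{n_k-\tilde n_k}\,r^{\nu-\tilde\nu}$, where $\mathcal S$ is the block-diagonal approximation to $\mathcal T$ constructed in Step V of that proof; second, the Jacobian of the orthogonal projection $\pi_{\mathcal T}\colon \mathcal T \to \mathcal S$ corrects this to the measure of $\mathcal T\cap\operatorname{Box}_2(0,r)$ itself. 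A direct Gram-determinant computation, using that $\widehat D\varphi(x)$ is the block-diagonal part of $D\varphi(x)$, will identify the reciprocal of this Jacobian with $\mathcal D(D\varphi(x))/\mathcal D(\widehat D\varphi(x))$. Finally I transport back to $\varphi^{-1}(t)\cap\operatorname{Box}_2(x,r)\subset\mathbb M$ via $\theta_x$: its differential at $0$ is the identity (so there is no coordinate Jacobian), but the Riemannian surface measure on the level set equals the coordinate Lebesgue measure times $\widetilde{\mathcal D}(g|_{\ker D\varphi(x)})$, which accounts for the ambient Riemann tensor. Collecting all factors yields the claimed formula.

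The main obstacle is the layer-wise estimate on $|c_i(y)|$ in Part (I): plain Riemannian smoothness gives only an isotropic $o(|y|)$-bound, whereas the statement demands an anisotropic bound matching the sub-Riemannian scaling in every layer. Making this precise requires a careful inductive Taylor argument that simultaneously exploits the higher-regularity hypothesis on $\varphi$ and the block structure of $D\varphi(x)$ at the regular point. A subsidiary but nontrivial step is identifying the projection Jacobian with the ratio $\mathcal D(D\varphi(x))/\mathcal D(\widehat D\varphi(x))$.
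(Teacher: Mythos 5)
Your proposal is directionally aligned with the paper but misidentifies the key tool in Part~(I), and underestimates what is needed in Part~(II).

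For Part~(I), you isolate the right obstacle — an \emph{anisotropic}, layer-by-layer estimate $|c_i(y)|=o(d_2(0,y)^{\deg\sigma_i})$ rather than the isotropic $o(|y|)$ from $C^1$-smoothness — and propose to obtain it by a Taylor expansion order-by-order in the grading, invoking the $C^{M+1}$-hypothesis. But this estimate is exactly what the already-available $hc$-differentiability theorem (Vodopyanov, cited as the statement after Definition~\ref{defdcc}) encodes, and it holds under the mild $C^1$-contact hypothesis. The paper's Step~III uses it directly: writing $y=y_{\widehat D\psi}+y_{\ker\widehat D\psi}$ along $(\ker\widehat D\psi(0))^\perp\oplus\ker\widehat D\psi(0)$, $hc$-differentiability gives $d_2^{\psi(0)}(\widehat D\psi(0)(y_{\widehat D\psi}),\psi(0))=o(d_2(0,y))$, and then the layer-wise nondegeneracy of the graded homomorphism $\widehat D\psi(0)$ on $(\ker\widehat D\psi(0))^\perp$ (which is a consequence of regularity) transports the $o(r)$ bound in the image back to $d_2^0(0,y_{\widehat D\psi})=o(r)$. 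No higher Taylor coefficients are used. The $C^{M+1}$-hypothesis of Assumption~\ref{assumpphi} is reserved for the characteristic-set analysis in Section~4, not for this theorem, so your plan would use a hypothesis that is not actually available in the $\mathcal H^\nu(\chi)=0$ case and is in any event unnecessary. Also, the paper does not use an implicit-function-theorem graph map $\Psi$ over $\mathcal T$; instead it works with the oblique projection $\eta:\psi^{-1}(t)\cap\operatorname{Box}_2(0,r)\to\ker D\psi(0)$ along $(\ker\widehat D\psi(0))^\perp$, proves it is bi-Lipschitz and bijective (Steps~VII--VIII), and bounds both the $d_2$- and $\rho$-distortion of $\eta^{-1}$ (Steps~IX--X). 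Your graph map $\Psi$ is a different section and the two do not coincide, though a similar argument might be made to work with care.

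For Part~(II), you call the identification of the Jacobian of $\pi_{\mathcal T}$ with $\mathcal D(D\varphi(x))/\mathcal D(\widehat D\varphi(x))$ a \emph{direct Gram-determinant computation}, but this is precisely the content of the paper's Steps~I--II: one introduces a second projection $\pi_{\mathcal N}$ of the normal space $\mathcal N$ onto the row space of $\widehat D\psi(0)$, shows $\mathcal S\bot\mathcal L$, checks that $\dim(\mathcal T\cap\mathcal L)=\dim(\mathcal N\cap\mathcal S)=0$ at regular points, and constructs an explicit orthogonal rotation $\Theta$ interchanging $\pi_{\mathcal T}^{-1}|_{\mathcal S'}$ and $\pi_{\mathcal N}^{-1}|_{\mathcal L'}$; only then does the ratio of Gram determinants appear. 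Your sketch compresses this into one clause, which is where the actual work is. The surrounding frame of your Part~(II) — using Theorem~\ref{tang_meas} for $\mathcal H^{N-\widetilde N}(\mathcal S\cap\operatorname{Box}_2(0,r))=\prod_k\omega_{n_k-\tilde n_k}r^{\nu-\tilde\nu}$, correcting by the projection Jacobian, and transporting via $\theta_x$ to pick up $\widetilde{\mathcal D}(g|_{\ker D\varphi(x)})$ — matches the paper's Steps~XI--XII.
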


In the proof of Theorem {\ref{Leb_meas}} we use the following
notation introduced in Theorem {\ref{tang_meas}}. We denote the
mapping $\varphi\circ\theta_x$ by $\psi$, and the tangent space
$T_0[\psi^{-1}(t)]$ by $\mathcal T$. We also use the mapping $\pi_{\mathcal T}$
defined in Step {\bf III} and the image $\mathcal S=\pi_{\mathcal T}(\mathcal T)$.
\medskip

\begin{proof}[Proof of Theorem {\ref{Leb_meas}}] Notice that without loss of generality we may assume
that $\widehat{D}\psi(z)$ is strictly
separated from $0$ on $(\ker\widehat{D}\psi(0))^{\bot}$ for
$z\in\operatorname{Box}_2(0,r)$, i.~e., $\widehat{D}\psi(z)(v)\geq\alpha>0$ for all
$z\in\operatorname{Box}_2(0,r)$ and $v\in(\ker\widehat{D}\psi(0))^{\bot}$ with $|v|=1$ (here $|\cdot|$ denotes the Euclidean norm).

{\bf Step I.}  Verify that $C$ in {\eqref{constc}} at a regular point equals
$\frac{\mathcal D({D}\varphi(x))}{\mathcal D(\widehat{D}\varphi(x))}$ up to a Riemannian
factor. In particular, we show that
$\frac{\mathcal D({D}\varphi(x))}{\mathcal D(\widehat{D}\varphi(x))}$ equals the measure
distortion under the mapping $\pi_{\mathcal T}$ defined
in Step {\bf II} of Theorem {\ref{tang_meas}}.

To this end, consider the projection of the normal space $\mathcal N$ to
$\mathcal T$ onto $H_{\widetilde{M}}(0)$ constructed by the choice of vectors
written as rows in the matrix of $\widehat{D}\psi(0)$ (recall that the matrix of $D\psi(0)$ equals the matrix of $([(\theta_x^{-1})_*X_1]\psi,\ldots,[(\theta_x^{-1})_*X_N]\psi)$, and in the bases $\{e_i\}_{i=1}^N$ and $\{e_i\}_{i=1}^{\widetilde{N}}$ the matrices of ${D}\psi(0)$ and $\widehat{D}\psi(0)$ have the structure similar to {\eqref{riemdiff}} and {\eqref{subriemdiff}}, respectively). Denote this
projection by $\pi_{\mathcal N}$ and its image by
$\mathcal L=\pi_{\mathcal N}(\mathcal N)$. Verify that $\mathcal S$ is orthogonal to $\mathcal L$. Write a basis
for $\mathcal N$ as the row vectors of the matrix $D\psi(0)$ in the basis
$\{e_i\}_{i=1}^{N}$.

{\bf (i)} Without loss of generality we may assume that $\pi_{\mathcal N}$ is an orthogonal projection (see Theorem
{\ref{tang_meas}}). Indeed, the matrix of $D\psi(0)$ admits ``gradings'' $\widetilde{\cal B}_1,\ldots,\widetilde{\cal B}_{\widetilde{M}}$ of the columns
and $\widetilde{\cal A}_1,\ldots,\widetilde{\cal A}_{\widetilde{M}}$ of the rows similar to those of the matrix $\Delta$ of Step {\bf II} substep {\bf (ii)} of
Theorem {\ref{tang_meas}}. Namely, we say that a column vector belongs to the block $\widetilde{\mathcal B}_k$ if its index is at least $\dim H_{k-1}+1$ (here we assume that $\dim H_0=0$) and at most $\dim H_k$, $k=1,\ldots,\widetilde{M}$, and we say that a row vector belongs to the block $\widetilde{\mathcal A}_l$ if the index of its first nonzero element is at least $\dim H_{k-1}+1$ (here we assume that $\dim H_0=0$) and at most $\dim H_k$, $k=1,\ldots,\widetilde{M}$. Put $\widetilde{\cal V}_k=\widetilde{\cal B}_k\cap \widetilde{\cal
A}_k$, $k=1,\ldots,\widetilde{M}$.

Next, we transform the blocks $\widetilde{\mathcal B}_k\setminus\widetilde{\mathcal V}_k$,
$k=1,\ldots,\widetilde{M}$.  Define the required transformation of the matrix of
$D\psi(0)$ by induction. For $k=1$, we have nothing to transform;
thus, the base of induction holds. Suppose that  we have
transformed the blocks $\widetilde{\mathcal B}_k\setminus\widetilde{\mathcal V}_k$,
$k=1,\ldots,l$, $l\geq 1$. Assume that $\widetilde{\mathcal
V}_{l+1}\neq\varnothing$ (otherwise, the transformation is trivial).
Replace the blocks $\widetilde{\cal A}_{1},\ldots,\widetilde{\cal A}_{l}$ by the
projections of their row vectors onto the space
$$
(\operatorname{span}\{\mathcal A_{l+1}\})^{\bot}\cap\operatorname{span}\{\widetilde{\cal A}_1,\ldots,\widetilde{\cal
A}_{l+1}\}.
$$
This projection leaves the blocks
$\widetilde{\mathcal B}_k\setminus\widetilde{\mathcal V}_k$, $k=1,\ldots,l$, $l\geq 1$,
unchanged because of the ``triangular'' structure of the
matrix of $D\psi(0)$.

Moreover, the rows of the block $\widetilde{\mathcal B}_{k}\setminus{\cal V}_{k}$
are orthogonal to those of $\widetilde{\mathcal V}_k$, $k=1,\ldots,\widetilde{M}$.
It results
\begin{equation*}
\left(%
\begin{array}{ccccccccccccc}
 {\cal V}_1 & {\cal V}_2^{\bot} &  & {\cal V}_3^{\bot} &  & * & * &  & \vdots & \vdots & * & \ldots & * \\
0  &{\cal V}_2  &   &  &  & \vdots & \vdots & * &  & * & * & \ldots & * \\
0 & 0 &  & {\cal V}_3 &  &  &  & * & {\cal V}_{\widetilde{M}-1}^{\bot} & {\cal V}_{M}^{\bot} & * & \ldots & * \\
  \vdots  & \vdots & 0 & 0 &  & \ddots & \ddots & \ddots & * & & * & \ldots & * \\
0 & 0 &  &  & \vdots & 0 & 0 & 0 & {\cal V}_{\widetilde{M}-1} & & * & \ldots & * \\
0 & 0 & 0 & 0 & 0 & 0 & 0 & 0 & 0 &{\cal V}_{\widetilde{M}} & * & \ldots & * \\
\end{array}%
\right)
\end{equation*}

By the construction, the blocks ${\cal
V}_1,\ldots,{\cal V}_{\widetilde{M}}$ constitute the matrix of
$\widehat{D}\psi(0)$ (see {\eqref{subriemdiff}}), and $\dim \mathcal L=\widetilde{N}$ at regular points.

{\bf (ii)} It is easy to see that $\mathcal L$ is orthogonal to $\mathcal S$.

Indeed, take two vectors $v\in \mathcal L$ and~$w\in \mathcal S$. First, suppose that
$v$ and~$w$ are the images of arbitrary basis vectors in $\mathcal N$ and $\mathcal T$
respectively. Then $v\in\widetilde{\mathcal V}_i$ and $w\in{\mathcal V}_j$ for some $i$ and $j$.
If $i\neq j$ then obviously $v\bot w$ (since they have no corresponding nonzero components).

Suppose that $i=j$ and consider the preimages of $v$ and $w$. By
construction, they are orthogonal. Note that the preimage of $v$
belongs to $\mathcal N$, and we can write it as $(0,\ldots,0,\tilde{v},*)$, where
the nonzero part begins with some element $\tilde{v}$ of $v$. The preimage of
$w$ belongs to $\mathcal T$, and it can be written as $(*,\tilde{w},0,\ldots,0)$,
where the last component of the nonzero part is some element $\tilde{w}$ of
$w$. Consequently,
$$
0=(0,\ldots,0,\tilde{v},*)\cdot(*,\tilde{w},0,\ldots,0)^T=(v,w),
$$
and $v\bot w$.

Since $v$ and $w$ are the images of arbitrary basis
vectors in $\mathcal T$ and $\mathcal N$, we have $\mathcal S\bot \mathcal L$.

{\bf (iii)} Thus, $\mathcal S$ is orthogonal to $\mathcal L$. Moreover, the
projection of $\mathcal T$ onto $\mathcal S$ is also orthogonal (see Theorem
{\ref{tang_meas}}): $(v-\pi_{\mathcal T}(v))\bot \pi_{\mathcal T}(v)$ for
all $v\in \mathcal T$. Let us verify that the measure distortions
under the projections $\pi_{\mathcal T}$ and $\pi_{\mathcal N}$ coincide.

{\bf Step II.} {\bf (i)} Show that
$$
\dim \mathcal T-\dim (\mathcal T\cap \mathcal S)=\dim \mathcal N-\dim (\mathcal N\cap \mathcal L).
$$
Put $l=\dim (\mathcal T\cap \mathcal S)$. On the one hand, since $\mathcal N\bot \mathcal T$ and
$\mathcal L\bot \mathcal S$, it follows that $\operatorname{span}\{\mathcal N,\mathcal L\}\bot (\mathcal T\cap \mathcal S)$, and so
$\operatorname{span}\{\mathcal N,\mathcal L\}\subset(\mathcal T\cap \mathcal S)^{\bot}$. On the other hand, if
$v\bot(\mathcal T\cap \mathcal S)$ then $v\in\operatorname{span}\{\mathcal N,\mathcal L\}$. Indeed, assume on the
contrary that $v\notin\operatorname{span}\{\mathcal N,\mathcal L\}$. Then $v=v_{\mathcal N\mathcal L}+v_{(\mathcal N\mathcal L)^{\bot}}$,
where $v_{\mathcal N\mathcal L}\in\operatorname{span}\{\mathcal N,\mathcal L\}$ and
$v_{(\mathcal N\mathcal L)^{\bot}}\in(\operatorname{span}\{\mathcal N,\mathcal L\})^{\bot}$ with $v_{(\mathcal N\mathcal L)^{\bot}}\neq0$. Consequently, since $v_{(\mathcal N\mathcal L)^{\bot}}\bot\mathcal N$ and $v_{(\mathcal N\mathcal L)^{\bot}}\bot\mathcal L$, we obtain
$v_{(\mathcal N\mathcal L)^{\bot}}\in(\mathcal T\cap \mathcal S)$ and arrive at a contradiction.

Thus, $\operatorname{span}\{\mathcal N,\mathcal L\}=(\mathcal T\cap \mathcal S)^{\bot}$, and
$\dim\operatorname{span}\{\mathcal N,\mathcal L\}=N-l$. Since $\dim\mathcal N=\dim\mathcal L$, we get
$$\dim (\mathcal N\cap
\mathcal L)=2\dim \mathcal N-\dim\operatorname{span}\{\mathcal N,\mathcal L\}=2\widetilde{N}-N+l.
$$ Consequently,
\begin{equation}\label{dims}
\dim \mathcal T-\dim (\mathcal T\cap \mathcal S)=N-\widetilde{N}-l=\dim \mathcal N-\dim (\mathcal N\cap \mathcal L).
\end{equation}

{\bf (ii)} Recall that $\mathcal L=\ker\widehat{D}\psi(0)$ at regular points (see Step {\bf I}, substep {\bf (ii)}). Verify that $\dim(\mathcal T\cap \mathcal L)=\dim(\mathcal N\cap \mathcal S)=0$. Indeed, assume on the
contrary that there exists $h\in \mathcal T\cap \mathcal L$, $h\neq0$. Then, $h\bot \mathcal N$ and $h\bot
\mathcal S$. Since $(\mathcal T\cap \mathcal L)^{\bot}=\operatorname{span}\{\mathcal N, \mathcal S\}$, which we can justify similarly to the substep {\bf (i)} above, it follows that $\dim\operatorname{span}\{\mathcal N,
\mathcal S\}\leq N-1$. Consequently, from $\dim\operatorname{span}\{\mathcal N\}=\widetilde{N}$ and $\dim\operatorname{span}\{\mathcal S\}=N-\widetilde{N}$, we infer that $\dim (\mathcal N\cap \mathcal S)\geq 1$ and there
exists $h^{\bot}\in(\mathcal N\cap \mathcal S)$; thus, $\widehat{D}\psi(0)$ vanishes
on some vector in~$\mathcal N$. This implies that one of $\widetilde{N}$ vectors in $\mathcal N$ is orthogonal to $\mathcal L$. Consequently, its image under
$\pi_{\mathcal N}$ vanishes since every vector in $\mathcal N$ can be
uniquely represented as a sum of a vector in $\mathcal L$ and a vector
in $\mathcal L^{\bot}$. Thus, $\pi_{\mathcal N}$
has a nonzero kernel, and $\dim (\pi_{\mathcal N}(\mathcal N))<\widetilde{N}$. This implies that
$\dim[\pi_{\mathcal N}(\mathcal N)]=\dim[(\ker\widehat{D}\psi(0))^{\bot}]<\widetilde{N}$, which
contradicts the regularity of $x$.

Hence, $\dim(\mathcal T\cap \mathcal L)=0$ and $\dim (\mathcal N\cap \mathcal S)=0$.

{\bf (iii)} Put $q=\dim \mathcal T-\dim (\mathcal T\cap \mathcal S)$.
Consider the maximal subset $\mathcal S^{\prime}$ of $\mathcal S$ satisfying $\mathcal S^{\prime}\bot
(\mathcal T\cap \mathcal S)$. Namely, $\mathcal S^{\prime}=
(\mathcal T\cap \mathcal S)^{\bot}\cap\mathcal S$. Then $\dim \mathcal S^{\prime}=q$. Put
$\mathcal T^{\prime}=\pi_{\mathcal T}^{-1}(\mathcal S^{\prime})$. It is easy to see that
$\mathcal T^{\prime}\bot(\mathcal T\cap \mathcal S)$. Indeed, take  arbitrary vectors $v\in
\mathcal T\cap \mathcal S$, $v^{\prime}\in \mathcal S^{\prime}$, and a vector $w$,
such that $v^{\prime}+w=\pi_{\mathcal T}^{-1}(v^{\prime})$. In particular, $v^{\prime}+w\in\mathcal T$. Note that
$w\bot(\mathcal T\cap \mathcal S)$ because $\pi_{\mathcal T}$ is an orthogonal projection.
Then $\mathcal T\ni\pi_{\mathcal T}^{-1}(v^{\prime})=(v^{\prime}+w)\bot v$. Since
$v$ and $v^{\prime}$ are arbitrary, it follows that
$\mathcal T^{\prime}\bot (\mathcal T\cap \mathcal S)$. The non-degeneracy of $\pi_{\mathcal T}$ and
$\pi_{\mathcal T}^{-1}$ yields $\dim {\mathcal T}^{\prime}=q$.

Thus, we may regard the following mapping as the ``inverse'' projection $\pi_{\mathcal T}^{-1}$. To each $v\in \mathcal S$ it assigns the vector
$w\in\operatorname{span}\{v,\mathcal L\}\cap \mathcal T$ such that $w=(v+\mathcal L)\cap \mathcal T$. It is well defined  since $\mathcal L\bot \mathcal S$ and $\dim(\mathcal T\cap \mathcal L)=0$.

{\bf (iv)} Since $\mathcal L\cap \mathcal N$ is orthogonal to both $\mathcal S$ and $\mathcal T$, we
have
$$(v+\mathcal L)\cap \mathcal T=(v+\mathcal E)\cap \mathcal T$$
for $v\in \mathcal S$, where $\mathcal E\subset(\mathcal L\cap
\mathcal N)^{\bot}\cap \mathcal L$ is the minimal subset enjoying this property. Let us prove that $\mathcal E=(\mathcal L\cap
\mathcal N)^{\bot}\cap \mathcal L$. It is easy to see that if $w=(v+\mathcal L)\cap \mathcal T=v+u$ with $u\in \mathcal L$ then
$u=w-v\in\operatorname{span}\{\mathcal S,\mathcal T\}=(\mathcal L\cap \mathcal N)^{\bot}$. Consequently, $u\in(\mathcal L\cap
\mathcal N)^{\bot}\cap \mathcal L$, and $\dim \mathcal E\leq N-\widetilde{N}-l$.

Verify that $\dim \mathcal E\geq N-\widetilde{N}-l$. Assume on the contrary that $\mathcal E\neq
(\mathcal L\cap \mathcal N)^{\bot}\cap \mathcal L$, and consider $v_0\in \mathcal E^{\bot}\cap ((\mathcal L\cap
\mathcal N)^{\bot}\cap \mathcal L)$. Since $v_0\in\operatorname{span}\{\mathcal T,\mathcal S\}=(\mathcal L\cap \mathcal N)^{\bot}$, it follows that $v_0=v_0^{\mathcal T}+v_0^{\mathcal S}$, where $v_0^{\mathcal T}\in \mathcal T$ and $v_0^{\mathcal S}\in \mathcal S$. Take
$v=-v_0^{\mathcal S}\in\mathcal S$ and consider $v_0^{\mathcal T}=v+v_0\in \mathcal T$. By the assumption on $\mathcal E$ and because $\pi_{\mathcal T}$ is an epimorphism,
$v+v_0=v^{\mathcal S}+v^{\mathcal E}$, where $v^{\mathcal S}\in \mathcal S$ and $v^{\mathcal E}\in \mathcal E$. Hence, $v-v^{\mathcal S}=v^{\mathcal E}-v_0$, where $v-v^{\mathcal S}\in\mathcal S$ and $v^{\mathcal E}-v_0\in\mathcal L$. Since $\mathcal L\bot \mathcal S$, we arrive at a contradiction.

{\bf (v)} Similarly, we show that $\pi_{\mathcal N}^{-1}(w)=(w+{\mathcal F})\cap \mathcal N$,
where $\mathcal F=(\mathcal S\cap \mathcal T)^{\bot}\cap \mathcal S=\mathcal S^{\prime}$, and $\dim \mathcal F=\dim \mathcal E$.

{\bf (vi)} Take a cube of radius $r$ in $\mathcal S$ which is the direct product of a cube of radius $r$ in $\mathcal T\cap
\mathcal S$ and a cube of radius $r$ in $\mathcal S^{\prime}$. Then its image
equals the direct product of a cube of radius $r$ in $\mathcal T\cap
\mathcal S$ and a subset of $\mathcal T$. Hence, it suffices to calculate the measure distortion of $\pi_{\mathcal T}^{-1}$ on
$\mathcal S^{\prime}$. Similarly, we infer that it suffices to
calculate the measure distortion of $\pi_{\mathcal N}^{-1}$ on $\mathcal L^{\prime}$.

{\bf (vii)} Consider the orthogonal mapping $\Theta$ that is a rotation about $\operatorname{span}\{\mathcal N\cap \mathcal L, \mathcal T\cap \mathcal S\}$ such that
\begin{itemize}
\item $\Theta(\mathcal F)=\mathcal E$
\item $\Theta((\mathcal N\cap\mathcal L)^{\bot}\cap\mathcal L)=(\mathcal T\cap\mathcal S)^{\bot}\cap\mathcal S$.
\end{itemize}
Note that if we choose an orthonormal basis in $\mathbb R^N$ in the following order: $\operatorname{span}\{\mathcal N\cap \mathcal L, \mathcal T\cap \mathcal S\}$, $\operatorname{span}\{\mathcal F\}$, $\operatorname{span}\{(\mathcal N\cap\mathcal L)^{\bot}\cap\mathcal L\}$ then the matrix of $\Theta$ looks like
\begin{equation*}
\Theta=\left(%
\begin{array}{ccc}
 E & 0 & 0 \\
0  & 0 & E \\
0 & \widetilde{E} & 0 \\
\end{array}%
\right),
\end{equation*}
where 
$$\widetilde{E}=
\left(%
\begin{array}{ccccc}
-1 & 0 & 0 & 0 & 0 \\
0 & 1 & 0 & 0 & 0 \\
0 & 0 & 1 & 0 & 0 \\
\cdot & \cdot & \cdot & \cdot & \cdot \\
0 & 0 & 0 & 0 & 1 \\
\end{array}%
\right)
$$

Consequently,
${\pi_{\mathcal T}^{-1}}|_{\mathcal S^{\prime}}(B(0,r)\cap{\mathcal S^{\prime}})={\Theta\circ\pi_{\mathcal N}^{-1}}|_{\mathcal L^{\prime}}(B(0,r)\cap{\mathcal L^{\prime}})$ for any $r>0$,
and the measure distortions of these mappings coincide.

{\bf (viii)} Thus, the measure distortions of
$\pi_{\mathcal N}$ and $\pi_{\mathcal T}$ coincide.

Since both the Riemannian differential and $hc$-differential of $\theta_x$ at $x$
equal identity, by the chain rule the corresponding
``determinants'' coincide:
$$\mathcal D(D\varphi(x))=\mathcal D(D\psi(0))\text{ \ and \ }\mathcal D(\widehat{D}\varphi(x))=\mathcal D(\widehat{D}\psi(0)).$$
Thus, the measure distortion under $\pi_{\mathcal T}^{-1}$ equals
$$
\frac{\mathcal D({D}\psi(x))}{\mathcal D(\widehat{D}\psi(x))}=\frac{\mathcal D({D}\varphi(x))}{\mathcal D(\widehat{D}\varphi(x))}.
$$

{\bf Step III.} Verify that $\psi^{-1}(t)\cap\operatorname{Box}_2(0,r)$ is a subset
of an $o(r)$-neighborhood of
$\ker\widehat{D}\psi(0)\cap\operatorname{Box}_2(0,r(1+o(1)))$ with respect to $d^0_{2E}$. To this end, we prove that
$\partial(\psi^{-1}(t)\cap\operatorname{Box}_2(0,r))$ is a subset of the
$o(r)$-neighborhood of
$\partial(\ker\widehat{D}\psi(0)\cap\operatorname{Box}_2(0,r(1+o(1))))$
 with respect to $d_2$, where $o(1)$ is uniform in $x=\theta_x(0)$, $x\in\mathcal U\Subset\mathbb M$.

Indeed, consider $y\in\partial(\psi^{-1}(t)\cap\operatorname{Box}_2(0,r))$. We can represent this
point as $y=y_{\widehat{D}\psi}+y_{\ker\widehat{D}\psi}$,
where $y_{\widehat{D}\psi}\in(\ker\widehat{D}\psi(0))^{\bot}$ and
$y_{\ker\widehat{D}\psi}\in\ker\widehat{D}\psi(0)$. Then the definition of the $hc$-differentiability implies that
\begin{multline*}
o(r)=d_2^{\psi(0)}(\widehat{D}\psi(0)(y),\psi(y))=d_2^{\psi(0)}(\widehat{D}\psi(0)(y_{\widehat{D}\psi}),\psi(y))\\
=d_2^{\psi(0)}(\widehat{D}\psi(0)(y_{\widehat{D}\psi}),\psi(0)).
\end{multline*}
We have $d_2^{\psi(0)}(\psi(0),\widehat{D}\psi(0)(y_{\widehat{D}\psi}))=o(r)$, and
$d_2^0(0,y_{\widehat{D}\psi})=o(r)$, where $o(\cdot)$ is uniform in $y$, $x=\theta_x(0)$, $x\in\mathcal U\Subset\mathbb M$.  The inequality
\begin{equation}\label{gtineq}
d_{2E}^0(0,u+v)\leq d_{2E}^0(0,u)+cd_{2E}^0(0,v)
\end{equation}
implies that
$$
d_{2E}^0(0, y_{\ker\widehat{D}\psi})\leq d_{2E}^0(0,y)+cd_{2E}^0(0, y_{\widehat{D}\psi})=r+o(r),
$$
and hence, $d_{2E}^0(0, y_{\ker\widehat{D}\psi})=r(1+o(1))$ and
$d_2^0(0, y_{\ker\widehat{D}\psi})=r(1+o(1))$. Thus,
$\partial(\psi^{-1}(t)\cap\operatorname{Box}_2(0,r))$ is a subset of the
$o(r)$-neighborhood of
$\partial(\ker\widehat{D}\psi(0)\cap\operatorname{Box}_2(0,r(1+o(1))))$ with respect
to $d_2$.

{\bf Step IV.} Similarly considering the linear mapping
$L(y)=D\psi(0)y$, we infer that $\partial(\ker
D\psi(0)\cap\operatorname{Box}_2(0,r))$ is a subset of the $o(r)$-neighborhood of
$\partial(\ker\widehat{D}\psi(0)\cap\operatorname{Box}_2(0,r(1+o(1))))$ with respect to $d_{2E}^0$. Indeed, we have $\ker
D\psi(0)=L^{-1}(0)$ and $DL(0)=D\psi(0)$, and $L$ is $hc$-differentiable at $0$ since $D\psi(0)$ is horizontal, and
$\widehat{D}L(0)=\widehat{D}\psi(0)$.

{\bf Step V.} Since there exists a bijective linear mapping from
$\ker\widehat{D}\psi(0)\cap\operatorname{Box}_2(0,r)$ to
$\ker{D}\psi(0)\cap\operatorname{Box}_2(0,r(1+o(1)))$, by Step {\bf IV}
we infer that $\partial(\ker\widehat{D}\psi(0)\cap\operatorname{Box}_2(0,r))$ is a
subset of the $o(r)$-neighborhood of
$\partial(\ker{D}\psi(0)\cap\operatorname{Box}_2(0,r(1+o(1))))$.

Hence, $\partial(\psi^{-1}(t)\cap\operatorname{Box}_2(0,r))$ is
a subset of the $o(r)$-neigh\-borhood of
$\partial(\ker{D}\psi(0)\cap\operatorname{Box}_2(0,r(1+o(1))))$ with respect to $d_{2E}^0$.

{\bf Step VI.} In Steps {\bf  VII}~-- {\bf IX} we explain that
$$\partial(\ker{D}\psi(0)\cap\operatorname{Box}_2(0,r(1+o(1))))$$ is a subset of the
$o(r)$-neighborhood of $\partial(\psi^{-1}(t)\cap\operatorname{Box}_2(0,r))$ with respect to $d_{2E}^0$.
Moreover, similar arguments imply that the same is true regarding
the sets $\ker{D}\psi(0)\cap\operatorname{Box}_2(0,r(1+o(1)))$ and
$\psi^{-1}(t)\cap\operatorname{Box}_2(0,r)$. Indeed, it suffices to recall that $o(\cdot)$ in the previous steps are uniform in $r>0$ and $x=\theta_x(0)$.

{\bf Step VII.} In order to justify the result of Step {\bf VI}, we
construct a one-to-one mapping from
$\partial(\psi^{-1}(t)\cap\operatorname{Box}_2(x,r))$ to a subset of the
$o(r)$-neighborhood of
$\partial(\ker{D}\psi(0)\cap\operatorname{Box}_2(x,r(1+o(1))))$ (with respect to $d_{2E}^0$) lying in the plane $\ker D\psi(0)$.

First of all, observe that at the regular point $0$ we have $\ker
D\psi\cap(\ker\widehat{D}\psi)^{\bot}=\{0\}$ by Step {\bf II}, substep {\bf (iii)}.

From this we deduce the following assumption: without
loss of generality we may assume that $D\psi(z)(v)\geq\beta$, where $z\in
U\supset\operatorname{Box}_2(0,r)$,  $\beta>0$, for $v\in
(\ker\widehat{D}\psi(0))^{\bot}$ with $|v|=1$.

This implies that in a neighborhood of a regular point we can expand each
point $y$ uniquely as $y=y_{\ker
D\psi}+y_{\widehat{D}\psi}$, where $y_{\ker
D\psi}\in{\ker
D\psi}$ and $y_{\widehat{D}\psi}\in (\ker\widehat{D}\psi)^{\bot}$. Suppose that
$y\in\psi^{-1}(t)\cap\partial\operatorname{Box}_2(0,r)$ and consider the
mapping $\xi:y\mapsto y_{\ker D\psi}$.

Verify that $d_2(0,\xi(y))=r(1+o(1))$. To this end, we prove that
$d_2^0(0,y-\xi(y))=o(r)$ and then apply
{\eqref{gtineq}} for $d^0_{2E}$.

Represent $y$ as $y=y_{\ker
D\psi}+y_{\widehat{D}\psi}=v_{\ker\widehat{D}\psi}+v_{\widehat{D}\psi}+y_{\widehat{D}\psi}$, where $y_{\ker
D\psi}=v_{\ker\widehat{D}\psi}+v_{\widehat{D}\psi}$.
Since $d_{2E}^0(0,y)=r$, Step {\bf III} implies that
$d_{2E}^0(0,v_{\ker \widehat{D}\psi})=r+o(r)$ and $d_{2E}^0(0,v_{\widehat{D}\psi}+y_{\widehat{D}\psi})=o(r)$. Moreover, by Step {\bf IV} we have
$$
d_{2E}^0(0,y_{\ker D\psi})=(1+o(1))d_{2E}^0(0, v_{\ker\widehat{D}\psi}),\quad d_{2E}^0(0,v_{D\psi})=o(1)d_{2E}^0(0, v_{\ker\widehat{D}\psi}),
$$
and taking into account the generalized triangle inequality for $d^0_{2E}$, we conclude that $d_{2E}^0(0,y_{\widehat{D}\psi})=o(r)$. Finally, we have
$d_2^0(0,y-\xi(y))=o(r)$.

{\bf Step VIII.} On this step, we show that $\xi$ is a bijective
mapping. It is easy to see that $\xi$ is injective. Indeed, it follows from $D\psi\langle(\ker\widehat{D}\psi)^{\bot}\cap\mathbb S^{N-1}\rangle\geq \beta>0$.

To verify that it is also surjective, we show that the mapping $\eta:\psi^{-1}\cap\operatorname{Box}_2(0,2r)\to\ker D\psi(0)$ is bi-Lipschitz. Here, $\eta$ assigns to each $y\in\psi^{-1}(t)\cap\operatorname{Box}_2(0,r)$, the vector $y_{\ker D\psi}$, where $y=y_{\ker
D\psi}+y_{\widehat{D}\psi}$, $y_{\ker
D\psi}\in{\ker
D\psi}$, and $y_{\widehat{D}\psi}\in (\ker\widehat{D}\psi)^{\bot}$. It is clear that $\eta|_{\psi^{-1}(t)\cap\partial\operatorname{Box}_2(0,r)}=\xi$.

Since $\eta$ is a projection, it is Lipschitz with the Lipschitz constant being equal to~1.
Verify that $|\eta(y_1)-\eta(y_2)|\geq K|y_1-y_2|$ for some $K>0$ for all $y_1, y_2\in \psi^{-1}(t)\cap\operatorname{Box}_2(0,2r)$.

Suppose that it is not so; thus, for every $\varepsilon>0$ there exist $y_1, y_2\in \psi^{-1}(t)\cap\operatorname{Box}_2(0,2r)$ such that $|\eta(y_1)-\eta(y_2)|<\varepsilon|y_1-y_2|$. Since $y_i={y_i}_{\ker
D\psi}+{y_i}_{\widehat{D}\psi}$, ${y_i}_{\ker
D\psi}\in{\ker
D\psi}$ and ${y_i}_{\widehat{D}\psi}\in (\ker\widehat{D}\psi)^{\bot}$, $i= 1,2$, we infer that
$$
|\eta(y_1)-\eta(y_2)|<\varepsilon|y_1-y_2|\leq\varepsilon|\eta(y_1)-\eta(y_2)|+\varepsilon|{y_1}_{\widehat{D}\psi}-{y_2}_{\widehat{D}\psi}|,
$$
and
$$
|{y_1}_{\widehat{D}\psi}-{y_2}_{\widehat{D}\psi}|\geq\frac{1-\varepsilon}{\varepsilon}|\eta(y_1)-\eta(y_2)|.
$$
Since $\ker D\psi(0)\cap(\ker\widehat{D}\psi(0))^{\bot}=0$, the norm $|y_1-y_2|_{\eta}=|\eta(y_1)-\eta(y_2)|+|{y_1}_{\widehat{D}\psi}-{y_2}_{\widehat{D}\psi}|$ is equivalent to the Euclidean norm.
Furthermore,
$$
\psi(y_1)=\psi(y_2)+D\psi(y_2)(y_1-y_2)+o(|y_1-y_2|),
$$ and $D\psi(y_2)(y_1-y_2)=o(|y_1-y_2|)$. Since $\eta$ is a linear mapping, we have ${y_1}_{\widehat{D}\psi}-{y_2}_{\widehat{D}\psi}=(y_1-y_2)_{\widehat{D}\psi}$ and $\eta(y_1)-\eta(y_2)=\eta(y_1-y_2)$.

On the one hand, by the definition of $\eta$ we have
\begin{multline*}
D\psi(y_2)(y_1-y_2)=D\psi(0)(y_1-y_2)+o(|y_1-y_2|)\\
=D\psi(0)((y_1-y_2)_{\widehat{D}\psi})+o(|y_1-y_2|),
\end{multline*}
and $D\psi(0)((y_1-y_2)_{\widehat{D}\psi})=o(|y_1-y_2|)$. On the other hand,
$$
|{y_1}_{\widehat{D}\psi}-{y_2}_{\widehat{D}\psi}|\geq \Bigl(1-\Bigl(\frac{1-\varepsilon}{\varepsilon}+1\Bigr)^{-1}\Bigr)|y_1-y_2|_{\eta}\geq L \Bigl(1-\Bigl(\frac{1-\varepsilon}{\varepsilon}+1\Bigr)^{-1}\Bigr)|y_1-y_2|,
$$
where $L>0$ depends only on $U$.

These relations lead to a contradiction with
$$
D\psi(0)\langle(\ker\widehat{D}\psi(0))^{\bot}\rangle\geq\beta>0.
$$

Thus, $\eta$ is a bi-Lipschitz mapping, and each $y\in\psi^{-1}(t)\cap\partial\operatorname{Box}_2(0,r)$ has a unique preimage. Thus, $\xi$ is bijective.

{\bf Step IX.} Step {\bf VIII} implies that $\xi$ is
invertible, and the previous steps imply that the $d_2$-distortion
of $\xi^{-1}$ is $(1+o(1))$ with respect to $0$. The estimate for
$d_2(0, y-\eta^{-1}(y))$ is proved in Step {\bf VII}. Observe that $o(1)$
depends on the convergence of $o(1)$ to $0$ in the equality
$$
d_2^{\psi(0)}(\widehat{D}\psi(0)y,\psi(y))=o(1)\cdot d_2(0,y).
$$
Thus, $o(1)$ is uniform in $y$. Consequently,
$\psi^{-1}(t)\cap\partial\operatorname{Box}_2(0,r)$ is a subset of the
$o(r)$-neighborhood of $\ker
D\psi(0)\cap\partial\operatorname{Box}_2(0,r(1+o(1)))$ with respect to $d_2$, and
conversely, $\ker D\psi(0)\cap\partial\operatorname{Box}_2(0,r)$ is a subset of the
$o(r)$-neighborhood of $\psi^{-1}(t)\cap\partial\operatorname{Box}_2(0,r)$ with
respect to $d^0_{2E}$ (see Step {\bf VII}). The same is true regarding the sets
$\psi^{-1}(t)\cap\operatorname{Box}_2(0,r)$ and $\ker
D\psi(0)\cap\operatorname{Box}_2(0,r(1+o(1)))$.

Note that the extension $\eta$ of $\xi$ onto
$\psi^{-1}(t)\cap\operatorname{Box}_2(0,r)$ has the $d_2$-distortion with respect
to $0$ equal to $1+o(1)$ as well. In Step
{\bf VIII}, we prove that this extension is bijective.

{\bf Step X.} On this step, we show that the classical, that is, ``Riemannian''
distortion of $\eta$ (and of $\xi$) is also $1+o(1)$.

Indeed, since $\ker D\psi(0)\cap(\ker\widehat{D}\psi(0))^{\bot}=\{0\}$ and $D\psi\langle(\ker\widehat{D}\psi(0))^{\bot}\rangle\geq\beta>0$ on $U$, it is easy to see that $\rho(0,y_{\widehat{D}\psi})=o(\rho(0,y))$. Hence,
$\rho(0,y_{\widehat{D}\psi})=o(\rho(0,y_{\ker{D}\psi}))$, and the
$\rho$-distortion of both $\eta$ and $\eta^{-1}$ equals $1+o(1)$.

Since $o(\cdot)$ are uniform in the definitions of Riemannian differentiability and $hc$-differentiability,
$o(\cdot)$ is uniform (both in the case of $d_2$- and
$\rho$-distortion).

Thus, claim ${\bf I}$ is proved.

{\bf Step XI.} Using the results of all previous steps, we can
consider the mapping $\eta^{-1}$ (that is, the extension of $\xi^{-1}$) from $\ker D\psi\cap\operatorname{Box}_2(0,r(1+o(1)))$
to $\psi^{-1}(t)\cap\operatorname{Box}_2(0,r)$ (see Steps {\bf VII}--{\bf IX}). The
established properties of~$\eta$ imply that the measure of
$\psi^{-1}(t)\cap\operatorname{Box}_2(0,r)$ equals that of $\ker
D\psi\cap\operatorname{Box}_2(0,r)$ up to a factor of $1+o(1)$, that is, it equals
$C(1+o(1))r^{\nu-\tilde{\nu}}$, where $C$ is obtained above.

Indeed, $\eta$ is a bi-Lipschitz $C^1$-mapping since it
is a projection of a $C^1$-surface in a nontangent direction.
Consequently, the inverse mapping $\eta^{-1}$ is also a bi-Lipschitz
$C^1$-mapping. Moreover, since it is differentiable, it is
also metrically differentiable, and its metric differential equals
$1+o(1)$ on every direction. The $C^1$-mapping $\eta^{-1}$ satisfies ${\cal H}^{N-\widetilde{N}}(\eta^{-1}(A))={\cal J}(\eta^{-1},0)\cdot{\cal
H}^{N-\widetilde{N}}(A)\cdot(1+o(1))$, where
$A=\eta(\psi^{-1}(t)\cap\operatorname{Box}_2(0,r))$. We can calculate the Jacobian of $\eta^{-1}$ via its metric differential {\cite{kirch1, Km1, Km3, Km5, Km6}}:
$$
{\cal J}(\eta^{-1},0)=\sigma_{N-\widetilde{N}}\Bigl[\int\limits_{{\mathbb
S}^{N-\widetilde{N}-1}}(1+o(1))^{\widetilde{N}-N}\,d{\cal
H}^{N-\widetilde{N}}(u)\Bigr]^{-1}=(1+o(1)).
$$
Thus, ${\cal H}^{N-\widetilde{N}}(\eta^{-1}(A))={\cal
H}^{N-\widetilde{N}}(A)(1+o(1))$. By taking into account a remark in Step~{\bf V} substep~{\bf (ii)} of the proof of Theorem~{\ref{tang_meas}} concerning regular points, we infer ${\cal
H}^{N-\widetilde{N}}(A)=\prod\limits_{k=1}^{M}\omega_{n_k-\tilde{n}_k}\cdot
\frac{\mathcal D({D}\varphi(x))}{\mathcal D(\widehat{D}\varphi(x))}r^{\nu-\tilde{\nu}}(1+o(1))$.

{\bf Step XII.} Consider the restriction of
$\theta_x$ to $\psi^{-1}(t)\cap\operatorname{Box}_2(0,r)$.
Observe that
$\theta_x(\psi^{-1}(t)\cap\operatorname{Box}_2(0,r))=\varphi^{-1}(t)\cap\operatorname{Box}_2(x,r)$.

The ${\cal H}^{N-\widetilde{N}}$-measure distortion under this mapping
equals $\widetilde{\mathcal D}(g|_{\ker D\varphi(x)})$, and $c_{\operatorname{Riem}}=\widetilde{\mathcal D}(g|_{\ker
D\varphi(x)})$.

Claim {\bf II} is proved.

All $o(\cdot)$ in both claims {\bf I} and {\bf II} are uniform in the radius $r$ and $x=\theta_x(0)$, $x\in\mathcal U\Subset\mathbb M$.

The proof of the theorem is complete.
\end{proof}

\begin{defn} The (spherical) Hausdorff measure of a set
$A\subset\varphi^{-1}(t)$ (constructed with respect to a sub-Rie\-man\-nian
(quasi)metric $d$ and sub-Rie\-man\-nian balls ${B}_d$ in~$d$) equals
$$
{\cal
H}^{\nu-\tilde{\nu}}(A)=\omega_{\nu-\tilde{\nu}}\lim\limits_{\delta\to0}\inf
\Bigl\{\sum\limits_{i\in\mathbb
N}r_i^{\nu-\tilde{\nu}}:\bigcup\limits_{i\in\mathbb
N}{B_d}(x_i,r_i)\supset A, x_i\in A, r_i\leq\delta, i\in\mathbb N\Bigr\}.
$$
\end{defn}

\begin{property}[{\cite{vk_birk}}]
The quasimetric $d_2$ and the metric $d_{cc}$ (see Definitions~{\ref{defhoriz}} and~{\ref{defdcc}}) are locally equivalent.
\end{property}

\begin{lem}\label{lemma_zero} Given a set $A\subset\varphi^{-1}(t)$ of
$\mathcal H^{N-\widetilde{N}}$-measure zero consisting of regular points
and $\varepsilon>0$, there exists a covering of $A$ by the ``balls''
$\{\operatorname{Box}_2(x_{j},r_{j})\cap\varphi^{-1}(t)\}_{j\in\mathbb
N}$, where $x_j\in\varphi^{-1}(t)$, $j\in\mathbb N$, the sum of whose $\mathcal H^{N-\widetilde{N}}$-measures is
less than $\varepsilon$.
\end{lem}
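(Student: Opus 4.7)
The plan is to combine the uniform upper bound in Theorem~\ref{Leb_meas} with a standard Besicovitch/Vitali type covering argument in the quasimetric $d_2$. Since the regular set $\mathbb D$ is open, by second countability we may exhaust $A$ by an increasing sequence of subsets $A_j$, each with compact closure $K_j\Subset\mathbb D$; if we achieve the desired covering with total measure $<\varepsilon/2^j$ for each $A_j$, their union gives the required covering of $A$. Hence it suffices to treat $A$ lying in a compact $K\Subset\mathbb D$.

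On such a compact $K$, Theorem~\ref{Leb_meas} furnishes constants $C_K<\infty$ and $r_0>0$ with
\[
\mathcal H^{N-\widetilde N}\bigl(\operatorname{Box}_2(y,r)\cap\varphi^{-1}(t)\bigr)\leq C_K\,r^{\nu-\tilde{\nu}}
\]
uniformly in $y\in K$ and $r\in(0,r_0]$; in particular the restriction of $\mathcal H^{N-\widetilde N}$ to $\varphi^{-1}(t)\cap K$ is a finite Radon measure (it is the smooth surface measure on the $C^1$-submanifold $\varphi^{-1}(t)\cap\mathbb D$). Using the assumption $\mathcal H^{N-\widetilde N}(A)=0$ and outer regularity, choose an open set $U\subset\varphi^{-1}(t)$ with $A\subset U$ and $\mathcal H^{N-\widetilde N}(U)<\varepsilon/N_0$, where $N_0$ is the Besicovitch multiplicity constant introduced below. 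For every $x\in A$, continuity of $\varphi$ and smoothness of $\varphi^{-1}(t)$ near $x$ let us choose $r_x\in(0,r_0]$ so small that $\operatorname{Box}_2(x,r_x)\cap\varphi^{-1}(t)\subset U$.

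The family $\{\operatorname{Box}_2(x,r_x):x\in A\}$ is a fine cover of $A$ in the quasimetric $d_2$. Proposition~\ref{prop21vg} gives the generalized triangle inequality for $d_2$, and the local equivalence of $d_2$ with the doubling metric $d_{cc}$ on the Carnot manifold $\mathbb M$ yields a Besicovitch-type theorem: a countable subfamily $\{\operatorname{Box}_2(x_j,r_j)\}_{j\in\mathbb N}$ can be extracted which still covers $A$ and whose multiplicity as a covering of $\mathbb M$ is bounded by a universal constant $N_0$ depending only on the doubling constant on $K$. The same multiplicity bound then holds for the intersections $\operatorname{Box}_2(x_j,r_j)\cap\varphi^{-1}(t)$, all of which lie in $U$; integrating the multiplicity function against $\mathcal H^{N-\widetilde N}$ on $U$ yields
\[
\sum_{j}\mathcal H^{N-\widetilde N}\bigl(\operatorname{Box}_2(x_j,r_j)\cap\varphi^{-1}(t)\bigr)\leq N_0\cdot\mathcal H^{N-\widetilde N}(U)<\varepsilon,
\]
which is exactly the claim. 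The main technical point is to justify a Besicovitch covering in the quasimetric $d_2$; this relies on Proposition~\ref{prop21vg} and the doubling property of $(\mathbb M,d_{cc})$ on compact subsets, both of which are available in the paper.
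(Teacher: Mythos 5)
Your strategy (localize to a compact piece of $\mathbb D$, shrink balls into an open set of small $\mathcal H^{N-\widetilde N}$-measure, then sum over a covering) is sound in outline, but the key step — extracting a subcover with multiplicity bounded by a universal constant $N_0$ — rests on a claim that is not available: that doubling of $d_{cc}$ (or $d_2$) plus Proposition~\ref{prop21vg} yields a Besicovitch-type covering theorem. Doubling only gives the $5r$-covering (Vitali) lemma, not a Besicovitch covering property; the two are genuinely different, and in fact the Besicovitch covering property is known to \emph{fail} for the Carnot--Carath\'eodory distance on Heisenberg groups even though that space is doubling. Whether it holds for the particular quasimetric $d_2$ is a delicate question (bi-Lipschitz equivalence does not preserve the Besicovitch property), and nothing in the paper supplies it. So the bound $\sum_j\mathcal H^{N-\widetilde N}(\operatorname{Box}_2(x_j,r_j)\cap\varphi^{-1}(t))\le N_0\,\mathcal H^{N-\widetilde N}(U)$ has no justification.

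The paper avoids this entirely by using the $5r$-covering lemma in $d_{cc}$: one covers $A$ (after splitting it into pieces at positive distance from $\chi$) by Riemannian balls $\{B(x_j,r_j)\}$ whose total $\mathcal H^{N-\widetilde N}$-measure on the level set is small, and inside each $B(x_j,r_j)$ one picks a \emph{disjoint} family of $d_{cc}$-balls whose $5r$-dilates cover $A\cap B(x_j,r_j)$. Passing to $d_2$-boxes via local equivalence, the cost of dilation is absorbed by the uniform asymptotics of Theorem~\ref{Leb_meas}, which gives $\mathcal H^{N-\widetilde N}(\operatorname{Box}_2(y,5C_2 r)\cap\varphi^{-1}(t))\le C(5,C_1,C_2)\,\mathcal H^{N-\widetilde N}(\operatorname{Box}_2(y,C_1 r)\cap\varphi^{-1}(t))$ for all $y$ in the compact piece. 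The disjoint boxes are contained in $B(x_j,r_j)$, so their total measure is at most $\mathcal H^{N-\widetilde N}(B(x_j,r_j)\cap\varphi^{-1}(t))$, and summing over $j$ closes the argument. Replace your Besicovitch step with this disjointness-at-a-smaller-scale argument and your proof goes through.
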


\begin{proof} Fix $\varepsilon>0$. Represent $A$ as the union of some subsets
$A_l\subset A$, $l\in\mathbb N$, lying at positive distances from
the set $\chi\cap\varphi^{-1}(t)$. Without loss of generality we
consider a set $A_l$ instead of $A$.
Suppose that $A_l$ satisfies the stated conditions. Observe that for this set there
exists a collection of ``balls''
$\{B(x_j,r_j)\cap\varphi^{-1}(t)\}_{j\in\mathbb N}$, where $x_j\in\varphi^{-1}(t)$ and $B(x_j,r_j)$ are Riemannian balls, $j\in\mathbb N$, the
sum of whose $\mathcal H^{N-\widetilde{N}}$-measures is less than
$\tilde{\varepsilon}$, where $\tilde{\varepsilon}$ is
determined by $\varepsilon$ (we specify the exact expression for
$\tilde{\varepsilon}$ below). Fix $j\in\mathbb N$. For $B(x_j,r_j)\cap\varphi^{-1}(t)$, there exists a collection
$$
\{{B}_{cc}(x,r)\cap\varphi^{-1}(t):x\in A_l\cap\varphi^{-1}(t),
{B}_{cc}(x,r)\cap\varphi^{-1}(t)\subset B(x_j,r_j),
r>0\}.
$$
By the $5r$-covering lemma (see {\cite{F}} for instance)
there exists a countable family of disjoint ``balls''
$\{{B}_{cc}(x_{j_k},r_{j_k})\cap\varphi^{-1}(t)\}$ such that
$$
\bigcup\limits_{k\in\mathbb N}{B}_{cc}(x_{j_k},5r_{j_k})\supset
A_l\cap B(x_j,r_j).
$$
(Here we use the metric $d_{cc}$ since the $5r$-covering lemma is established for metrics, and we cannot say for sure whether it holds for quasimetrics). Since $d_2$ and $d_{cc}$ are locally equivalent, there exist
some constants $0<C_1, C_2<\infty$ such that
$\operatorname{Box}_{2}(x,C_1r)\subset{B}_{cc}(x,r)\subset
\operatorname{Box}_{2}(x,C_2r)$ for sufficiently small $r>0$ and $x$
in some sufficiently small neighborhood. Consequently, there
exists a disjoint collection
$\{\operatorname{Box}_{2}(x_{j_k},C_1r_{j_k})\}$ such that
$$
\bigcup\limits_{k\in\mathbb
N}\operatorname{Box}_{2}(x_{j_k},5C_2r_{j_k})\supset A_l\cap
B(x_j,r_j).
$$
This implies that
\begin{multline*}
\sum\limits_{k\in\mathbb N}\mathcal
H^{N-\widetilde{N}}(\operatorname{Box}_{2}(x_{j_k},5C_2r_{j_k}))\leq
C(5,C_1,C_2)\sum\limits_{k\in\mathbb N}\mathcal
H^{N-\widetilde{N}}(\operatorname{Box}_{2}(x_{j_k},C_1r_{j_k}))\\
\leq C(5,C_1,C_2)\mathcal
H^{N-\widetilde{N}}(B(x_j,r_j)\cap\varphi^{-1}(t)).
\end{multline*}
Then
$\{\operatorname{Box}_{2}(x_{j_k},5C_2r_{j_k})\}_{j,k\in\mathbb
N}$ is a reqired collection,  and the sum of the $\mathcal
H^{N-\widetilde{N}}$-measures of these ``balls'' is at most
$C(5,C_1,C_2)\tilde{\varepsilon}<\varepsilon$.
\end{proof}

\begin{cor}\label{abscont} For each $\mathcal H^{N-\widetilde{N}}$-measure zero subset of $\varphi^{-1}(t)$ consisting of regular points
we have $\mathcal H^{\nu-\tilde{\nu}}=0$.
\end{cor}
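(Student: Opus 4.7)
The plan is to derive the corollary as a direct consequence of Lemma~\ref{lemma_zero} combined with the measure asymptotic from Theorem~\ref{Leb_meas}. The key observation is that at a regular point $x$, Theorem~\ref{Leb_meas}(II) furnishes the two-sided asymptotic
$$
\mathcal H^{N-\widetilde N}(\varphi^{-1}(t)\cap\operatorname{Box}_2(x,r))=C(x)\,r^{\nu-\tilde\nu}(1+o(1)),\qquad r\to 0,
$$
with $C(x)=\widetilde{\mathcal D}(g|_{\ker D\varphi(x)})\cdot\prod_{k=1}^M\omega_{n_k-\tilde n_k}\cdot\mathcal D(D\varphi(x))/\mathcal D(\widehat D\varphi(x))>0$, where the $o(1)$-term is uniform in $x$ on a compact subset of the regular set. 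Thus the radii appearing in a $\operatorname{Box}_2$-cover can be traded for the corresponding $\mathcal H^{N-\widetilde N}$-measures up to a bounded factor.

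First I would decompose $A=\bigcup_{l\in\mathbb N}A_l$, where each $A_l$ is contained in a compact set $K_l\Subset\mathbb M\setminus\chi$ at positive distance from the characteristic set, as in the proof of Lemma~\ref{lemma_zero}; it suffices to show $\mathcal H^{\nu-\tilde\nu}(A_l)=0$ for each fixed $l$. On $K_l$ the coefficient $C(x)$ is bounded below by some $c_l>0$ and the $o(1)$-term from Theorem~\ref{Leb_meas} is uniform, so there exists $r_l>0$ such that for every $x\in K_l$ and every $0<r\le r_l$ one has
$$
r^{\nu-\tilde\nu}\le \frac{2}{c_l}\,\mathcal H^{N-\widetilde N}(\varphi^{-1}(t)\cap\operatorname{Box}_2(x,r)).
$$

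Next, given $\varepsilon>0$ and $\delta\in(0,r_l)$, apply Lemma~\ref{lemma_zero} to obtain a covering of $A_l$ by balls $\{\operatorname{Box}_2(x_j,r_j)\cap\varphi^{-1}(t)\}_{j\in\mathbb N}$ with $x_j\in\varphi^{-1}(t)$, radii $r_j\le\delta$ (which we may arrange by refining the covering produced in the lemma, since one may always subdivide), and
$$
\sum_{j\in\mathbb N}\mathcal H^{N-\widetilde N}(\varphi^{-1}(t)\cap\operatorname{Box}_2(x_j,r_j))<\varepsilon.
$$
Combining the two estimates yields $\sum_j r_j^{\nu-\tilde\nu}<2\varepsilon/c_l$. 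Since $d_2$ is locally equivalent to the Carnot--Carath\'eodory metric, an adjustment of the radii by a bounded factor transforms this into an admissible $\mathcal H^{\nu-\tilde\nu}$-cover, and taking $\delta\to 0$ first and $\varepsilon\to 0$ afterwards gives $\mathcal H^{\nu-\tilde\nu}(A_l)=0$. Summing over $l$ completes the proof.

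The only delicate point will be ensuring that the radii $r_j$ produced by Lemma~\ref{lemma_zero} can be taken arbitrarily small (so that the $(1+o(1))$-factor in Theorem~\ref{Leb_meas} stays close to $1$ uniformly) and that the same $\operatorname{Box}_2$-balls serve simultaneously as admissible sets in the definition of $\mathcal H^{\nu-\tilde\nu}$; both are handled by the local equivalence of $d_2$ and $d_{cc}$ recorded just before Lemma~\ref{lemma_zero}, which absorbs the discrepancy into a universal multiplicative constant.
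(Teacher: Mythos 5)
Your proof is correct and is precisely the argument the paper intends when it says the corollary ``follows directly from the definition of an $\mathcal H^{\nu-\tilde\nu}$-negligible set'': one combines the small-total-measure covering from Lemma~\ref{lemma_zero} with the uniform two-sided asymptotic of Theorem~\ref{Leb_meas}(II) on a compact set at positive distance from $\chi$ to turn $\sum_j \mathcal H^{N-\widetilde N}(\operatorname{Box}_2(x_j,r_j)\cap\varphi^{-1}(t))<\varepsilon$ into $\sum_j r_j^{\nu-\tilde\nu}\lesssim\varepsilon$, and the local equivalence of $d_2$ and $d_{cc}$ (together with, if one is pedantic about the definition requiring centers $x_i\in A$, the standard doubling-the-radius trick to recenter each ball at a point of $A$) makes the cover admissible for the Hausdorff premeasure. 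Your write-up merely supplies the details the paper leaves implicit; no different route is taken.
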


The {proof} follows directly from the definition of an $\mathcal H^{\nu-\tilde{\nu}}$-negligible set.

\begin{lem}\label{opt_cov} Consider a regular point $x\in\mathbb M$.
Then given a sufficiently small $r>0$ with
$\operatorname{Box}_2(x,r)\cap\varphi^{-1}(\varphi(x))\cap\chi=\varnothing$ and $\varepsilon>0$, there exists a covering of $\operatorname{Box}_2(x,r)\cap\varphi^{-1}(\varphi(x))$ by the sets
$\operatorname{Box}_2(x_i,r_i)\cap\varphi^{-1}(\varphi(x))$ with $x_i\in\varphi^{-1}(\varphi(x))$ and
$\operatorname{Box}_2(x_i,r_i)\subset\operatorname{Box}_2(x,r)$, $i\in\mathbb N$,
such that
$$
\sum\limits_{i\in\mathbb N}\mathcal
H^{N-\widetilde{N}}(\operatorname{Box}_2(x_i,r_i)\cap\varphi^{-1}(\varphi(x)))
<\mathcal
H^{N-\widetilde{N}}(\operatorname{Box}_2(x,r)\cap\varphi^{-1}(\varphi(x)))+\varepsilon.
$$
\end{lem}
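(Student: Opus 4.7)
The plan is to build a Vitali-type disjoint cover inside $\operatorname{Box}_2(x,r)$ and then use Lemma~\ref{lemma_zero} to absorb the residual null set. Write $A:=\operatorname{Box}_2(x,r)\cap\varphi^{-1}(\varphi(x))$. After shrinking $r$ slightly if necessary, the closure of $A$ is a compact subset of the regular set $\mathbb D$, so Theorem~\ref{Leb_meas} provides the uniform asymptotic
$$
\mathcal H^{N-\widetilde N}(\operatorname{Box}_2(y,s)\cap\varphi^{-1}(\varphi(x)))=C(y)s^{\nu-\tilde\nu}(1+o(1))
$$
for every $y\in A$ and every sufficiently small $s>0$, with $C$ a continuous positive function on $A$ and $o(1)\to 0$ uniformly in $y$. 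In particular, the Radon measure $\mu:=\mathcal H^{N-\widetilde N}|_{\varphi^{-1}(\varphi(x))}$ is locally doubling with respect to the family of $d_2$-boxes near $A$.

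Next I would form the fine cover
$$
\mathcal F=\bigl\{\operatorname{Box}_2(y,s):y\in A,\ 0<s<\min(s_0,\tfrac{1}{2}d_2(y,\partial\operatorname{Box}_2(x,r)))\bigr\},
$$
each member of which is centered at a point of $\varphi^{-1}(\varphi(x))$ and lies inside $\operatorname{Box}_2(x,r)$. Using the same combination of the $5r$-covering lemma, local equivalence of $d_2$ and $d_{cc}$, and doubling of $\mu$ already exploited in the proof of Lemma~\ref{lemma_zero}, I extract a countable disjoint subfamily $\{\operatorname{Box}_2(x_i,r_i)\}\subset\mathcal F$ whose union covers $A$ up to a $\mu$-null set $E$. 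Because these boxes are pairwise disjoint and contained in $\operatorname{Box}_2(x,r)$, their intersections with $\varphi^{-1}(\varphi(x))$ are disjoint subsets of $A$, so automatically
$$
\sum_i\mathcal H^{N-\widetilde N}(\operatorname{Box}_2(x_i,r_i)\cap\varphi^{-1}(\varphi(x)))\leq\mathcal H^{N-\widetilde N}(A).
$$

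The leftover $E$ is $\mathcal H^{N-\widetilde N}$-null and lies in $A$, hence consists of regular points, so Lemma~\ref{lemma_zero} furnishes a countable cover of $E$ by additional boxes $\operatorname{Box}_2(x_j',r_j')$ centered on $\varphi^{-1}(\varphi(x))$ of total $\mu$-measure less than $\varepsilon$; if any such box extends beyond $\operatorname{Box}_2(x,r)$, I would replace it by a smaller concentric one that still covers the corresponding portion of $E$, which only decreases the measures. Concatenating the two families produces a cover of $A$ with total measure at most $\mathcal H^{N-\widetilde N}(A)+\varepsilon$. The main technical obstacle is the Vitali selection above: since $d_2$ is merely a quasimetric and its boxes are not metric balls, it is Theorem~\ref{Leb_meas} (via the doubling it provides) together with the local equivalence of $d_2$ and $d_{cc}$ (already used in Lemma~\ref{lemma_zero}) that makes the classical Vitali-type argument go through in this setting.
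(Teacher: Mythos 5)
Your proposal is correct and follows essentially the same route as the paper's proof: invoke the doubling property of $\mathcal H^{N-\widetilde N}$ on the level set (via Theorem~\ref{Leb_meas}) to run a Vitali-type covering argument extracting a disjoint countable subfamily of boxes centered on $\varphi^{-1}(\varphi(x))$ covering $A$ up to a null set, then absorb that residual null set of regular points using Lemma~\ref{lemma_zero} at total cost $<\varepsilon$. The paper cites the Vitali covering theorem directly rather than unfolding the $5r$-covering and $d_2/d_{cc}$-equivalence steps as you do, but the substance is the same.
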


\begin{proof} Without loss of generality we assume that $r>0$ satisfies
$$
\operatorname{dist}(\operatorname{Box}_2(x,r)\cap\varphi^{-1}(\varphi(x)),\chi)>0.
$$
Since the measure $\mathcal H^{N-\widetilde{N}}$ is doubling on
$\operatorname{Box}_2(y,s)\cap\varphi^{-1}(\varphi(x))$ for $y\in\varphi^{-1}(\varphi(x))$ and sufficiently small $s$, in view of the Vitali covering theorem there exists
a collection of ``balls''
$\{\operatorname{Box}_2(x_{i_j},r_{i_j})\cap\varphi^{-1}(\varphi(x))\}
_{j\in\mathbb N}$ with $x_{i_j}\in\varphi^{-1}(\varphi(x))$ and $\operatorname{Box}_2(x_{i_j},r_{i_j})\cap\varphi^{-1}(\varphi(x))\subset\operatorname{Box}_2(x,r)\cap\varphi^{-1}(\varphi(x))$, $j\in\mathbb N$, such that
$$
\mathcal
H^{N-\widetilde{N}}(\operatorname{Box}_2(x,r)\cap\varphi^{-1}(\varphi(x)))
=\sum\limits_{j\in\mathbb N}\mathcal
H^{N-\widetilde{N}}(\operatorname{Box}_2(x_{i_j},r_{i_j})\cap\varphi^{-1}(\varphi(x))).
$$
By Lemma {\ref{lemma_zero}}, the $\mathcal
H^{N-\widetilde{N}}$-negligible set
$$\operatorname{Box}_2(x,r)\cap\varphi^{-1}(\varphi(x))\setminus\bigcup\limits_{j\in\mathbb
N}\operatorname{Box}_2(x_{i_j},r_{i_j})\cap\varphi^{-1}(\varphi(x)),$$
admits a covering by the collection
$\{\operatorname{Box}_2(x_{i_k},r_{i_k})\cap\varphi^{-1}(\varphi(x))\}$ of ``balls'' the sum of whose $\mathcal H^{N-\widetilde{N}}$-measures is less
than the given $\varepsilon>0$. The proof is complete.
\end{proof}

\medskip

Recall that $D_{\mu_1}{\mu_2}(y)$ stands for the
derivative of a measure $\mu_2$ with respect to a measure $\mu_1$
at~$y$:
$$
D_{\mu_1}{\mu_2}(y)=\lim\limits_{r\to0}\frac{\mu_2(B(y,r))}{\mu_1(B(y,r))}.
$$

\begin{thm}[The Lebesgue Differentiation of Measures  on Level
Sets]\label{derivativemeas} The Hausdorff measure $\mathcal H^{\nu-\tilde{\nu}}$ of $\operatorname{Box}_2(x,r)\cap\varphi^{-1}(\varphi(x))$, where $x$
is a regular point, and
$\operatorname{dist}(\operatorname{Box}_2(x,r)\cap\varphi^{-1}(\varphi(x)),\chi)>0$,
asymptotically equals $\omega_{\nu-\tilde{\nu}}r^{\nu-\tilde{\nu}}$:
$$
{\cal
H}^{\nu-\tilde{\nu}}(\operatorname{Box}_2(x,r)\cap\varphi^{-1}(\varphi(x)))=\omega_{\nu-\tilde{\nu}}r^{\nu-\tilde{\nu}}(1+o(1)).
$$
The
derivative $D_{{\cal H}^{N-\widetilde{N}}}{\cal H}^{\nu-\tilde{\nu}}(x)$
equals
$$
\frac{1}{\widetilde{\mathcal D}(g|_{\ker
D\varphi(x)})}\cdot\frac{\omega_{\nu-\tilde{\nu}}}{\prod\limits_{k=1}^{M}{\omega_{n_k-\tilde{n}_k}}}\cdot
\frac{\mathcal D(\widehat{D}\varphi(x))}{\mathcal D({D}\varphi(x))}.
$$
\end{thm}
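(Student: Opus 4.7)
The plan is to prove the asymptotic formula $\mathcal H^{\nu-\tilde{\nu}}(\operatorname{Box}_2(x,r)\cap\varphi^{-1}(\varphi(x))) = \omega_{\nu-\tilde{\nu}}r^{\nu-\tilde{\nu}}(1+o(1))$ first, and then obtain the derivative $D_{\mathcal H^{N-\widetilde{N}}}\mathcal H^{\nu-\tilde{\nu}}(x)$ by taking the quotient with the formula for the Riemannian measure established in Theorem \ref{Leb_meas}. The essential input is the uniform control $o(1)\to 0$ (uniformly in the base point) in Theorem \ref{Leb_meas}, combined with the continuity of the coefficient
$$C(y)=\widetilde{\mathcal D}(g|_{\ker D\varphi(y)})\cdot\prod_{k=1}^{M}\omega_{n_k-\tilde{n}_k}\cdot\frac{\mathcal D(D\varphi(y))}{\mathcal D(\widehat{D}\varphi(y))}$$
on the compact set $\operatorname{Box}_2(x,r)\cap\varphi^{-1}(\varphi(x))$ (here the hypothesis on positive distance to $\chi$ and the regularity of $x$ keep $C$ bounded and continuous). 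The whole argument is local: for any $\varepsilon>0$ one can shrink $r$ and $\delta$ so that $|C(y)-C(x)|<\varepsilon C(x)$ and the $o(1)$ in Theorem \ref{Leb_meas} is at most $\varepsilon$ on all the balls in play.

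For the upper bound on $\mathcal H^{\nu-\tilde{\nu}}$, I would apply Lemma \ref{opt_cov} at scale $\delta$ to produce a nearly disjoint covering of $\operatorname{Box}_2(x,r)\cap\varphi^{-1}(\varphi(x))$ by small balls $\operatorname{Box}_2(x_i,r_i)\cap\varphi^{-1}(\varphi(x))$ with $x_i\in\varphi^{-1}(\varphi(x))$, $r_i\leq\delta$, whose total $\mathcal H^{N-\widetilde{N}}$-mass exceeds that of $\operatorname{Box}_2(x,r)\cap\varphi^{-1}(\varphi(x))$ by at most $\varepsilon$. Theorem \ref{Leb_meas} applied at each $x_i$ yields $\mathcal H^{N-\widetilde{N}}(\operatorname{Box}_2(x_i,r_i)\cap\varphi^{-1}(\varphi(x)))=C(x_i)r_i^{\nu-\tilde{\nu}}(1+o(1))$, so that
$$\sum_i r_i^{\nu-\tilde{\nu}}=\sum_i\frac{\mathcal H^{N-\widetilde{N}}(\operatorname{Box}_2(x_i,r_i)\cap\varphi^{-1}(\varphi(x)))}{C(x_i)(1+o(1))}\leq\frac{\mathcal H^{N-\widetilde{N}}(\operatorname{Box}_2(x,r)\cap\varphi^{-1}(\varphi(x)))+\varepsilon}{C(x)(1-\varepsilon')}=r^{\nu-\tilde{\nu}}(1+o(1)).$$
Multiplying by $\omega_{\nu-\tilde{\nu}}$ and passing to the infimum gives the desired upper bound on the Hausdorff measure.

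For the lower bound, take any admissible covering $\{\operatorname{Box}_2(y_j,s_j)\}$ with $y_j\in\operatorname{Box}_2(x,r)\cap\varphi^{-1}(\varphi(x))$ and $s_j\leq\delta$. The subadditivity of $\mathcal H^{N-\widetilde{N}}$ together with Theorem \ref{Leb_meas} (uniform version) gives
$$C(x)(1-\varepsilon)r^{\nu-\tilde{\nu}}\leq\mathcal H^{N-\widetilde{N}}(\operatorname{Box}_2(x,r)\cap\varphi^{-1}(\varphi(x)))\leq\sum_j\mathcal H^{N-\widetilde{N}}(\operatorname{Box}_2(y_j,s_j)\cap\varphi^{-1}(\varphi(x)))\leq C(x)(1+\varepsilon)\sum_j s_j^{\nu-\tilde{\nu}}.$$
Dividing, taking the infimum over coverings, letting $\delta\to 0$ and then $\varepsilon\to 0$ yields $\mathcal H^{\nu-\tilde{\nu}}(\operatorname{Box}_2(x,r)\cap\varphi^{-1}(\varphi(x)))\geq \omega_{\nu-\tilde{\nu}}r^{\nu-\tilde{\nu}}(1+o(1))$. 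Combining the two bounds gives the claimed asymptotic. Finally, dividing this by the formula from Theorem \ref{Leb_meas} and passing to the limit as $r\to 0$ produces
$$D_{\mathcal H^{N-\widetilde{N}}}\mathcal H^{\nu-\tilde{\nu}}(x)=\frac{\omega_{\nu-\tilde{\nu}}}{C(x)}=\frac{1}{\widetilde{\mathcal D}(g|_{\ker D\varphi(x)})}\cdot\frac{\omega_{\nu-\tilde{\nu}}}{\prod_{k=1}^{M}\omega_{n_k-\tilde{n}_k}}\cdot\frac{\mathcal D(\widehat{D}\varphi(x))}{\mathcal D(D\varphi(x))}.$$

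The main obstacle I anticipate is the bookkeeping in converting the two one-sided estimates into the $(1+o(1))$ form: one must ensure that the errors coming from (i) the Vitali/covering approximation, (ii) the continuity modulus of $C$, and (iii) the uniform $o(1)$ of Theorem \ref{Leb_meas} can all be absorbed into a single $o(1)$ as $r\to 0$. This is where Lemma \ref{lemma_zero} and Corollary \ref{abscont} are needed as well, to argue that the exceptional sets escaping the disjoint covering in Lemma \ref{opt_cov} contribute nothing to $\mathcal H^{\nu-\tilde{\nu}}$, so they do not spoil either inequality.
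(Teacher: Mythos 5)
Your proposal is correct and follows essentially the same route as the paper: both rely on the uniform $o(1)$ in Theorem~\ref{Leb_meas}, the continuity of the local density coefficient, and Lemma~\ref{opt_cov} to identify the infimum of the $\mathcal H^{N-\widetilde{N}}$-sums over admissible coverings with $\mathcal H^{N-\widetilde{N}}(\operatorname{Box}_2(x,r)\cap\varphi^{-1}(\varphi(x)))$. The paper streamlines the two one-sided estimates by factoring $[\alpha(x)+\Delta(x,r)]$ out of the covering sum in one step, whereas you write the upper and lower inequalities separately, but the content is the same.
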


\begin{proof} Consider the intersection of the sub-Riemannian ball of a sufficiently small radius $r>0$ centered at $x$ and the level set
$\varphi^{-1}(\varphi(x))$. Fix $\delta>0$ and some covering $\{\operatorname{Box}_2(y_i,r_i)\}_{i\in\mathbb N}$ of
this intersection by
sub-Riemannian balls as in the definition of the set function $\mathcal
H^{\nu-\tilde{\nu}}_{\delta}$. Recall that for a (quasi)metric $d$ we have
$$
{\cal
H}^{\nu-\tilde{\nu}}_{\delta}(A)=\omega_{\nu-\tilde{\nu}}\inf
\Bigl\{\sum\limits_{i\in\mathbb
N}r_i^{\nu-\tilde{\nu}}:\bigcup\limits_{i\in\mathbb
N}{B_d}(x_i,r_i)\supset A, x_i\in A, r_i\leq\delta, i\in\mathbb N\Bigr\}
$$
Then, setting
$${\alpha(y)}=\lim\limits_{r\to0}\frac{\omega_{\nu-\tilde{\nu}}r^{\nu-\tilde{\nu}}}{{\cal
H}^{N-\widetilde{N}}(\operatorname{Box}_2(y,r)\cap\varphi^{-1}(\varphi(y)))},$$
by Theorem
{\ref{Leb_meas}}, we have
\begin{align*}
\omega_{\nu-\tilde{\nu}}\sum\limits_{i\in\mathbb N}r_i^{\nu-\tilde{\nu}}
&=\sum\limits_{i\in\mathbb N}[\alpha(y_i)+\delta(y_i, r_i)]{\cal
H}^{N-\widetilde{N}}(\operatorname{Box}_2(y_i,r_i)\cap\varphi^{-1}(\varphi(x)))\\
&=[\alpha(x)+\Delta(x,r)]\sum\limits_{i\in\mathbb N}{\cal
H}^{N-\widetilde{N}}(\operatorname{Box}_2(y_i,r_i)\cap\varphi^{-1}(\varphi(x))),
\end{align*}
where $\delta(y_i, r_i)\to0$ as $r_i\to0$ uniformly in $i\in\mathbb N$, and $\Delta(x,r)\to0$ as $r\to0$, since $g$, $D\varphi$ and
$\widehat{D}\varphi$ are continuous, and by Theorem
{\ref{Leb_meas}} as well. For the fixed $x$ and $\varepsilon>0$,
we can choose a sufficiently small radius $r>0$ to satisfy
$|\Delta(x,r)|<\varepsilon\cdot\alpha(x)$. Consequently,
\begin{multline*}
(1-\varepsilon)\alpha(x)\sum\limits_{i\in\mathbb N}{\cal
H}^{N-\widetilde{N}}(\operatorname{Box}_2(y_i,r_i)\cap\varphi^{-1}(\varphi(x)))\\
\leq\omega_{\nu-\tilde{\nu}}\sum\limits_{i\in\mathbb
N}r_i^{\nu-\tilde{\nu}}
\leq(1+\varepsilon)\alpha(x)\sum\limits_{i\in\mathbb N}{\cal
H}^{N-\widetilde{N}}(\operatorname{Box}_2(y_i,r_i)\cap\varphi^{-1}(\varphi(x))).
\end{multline*}
Note that for each $\delta>0$ if the covering in the
definition of $\mathcal H^{\nu-\tilde{\nu}}_{\delta}$ is nearly
``optimal'' (i.~e., $\sum\limits_{i\in\mathbb
N}r_i^{\nu-\tilde{\nu}}$ is nearly minimal), then
$\sum\limits_{i\in\mathbb N}{\cal
H}^{N-\widetilde{N}}(\operatorname{Box}_2(y_i,r_i)\cap\varphi^{-1}(\varphi(x)))$ is also
nearly minimal. Since the
${\cal H}^{N-\widetilde{N}}$-measure is countably additive, by Lemma {\ref{opt_cov}} the
infimum of the values of the last sum equals $\mathcal
H^{N-\widetilde{N}}(\operatorname{Box}_2(x,r))$.

Hence,
$$
{\cal
H}^{\nu-\tilde{\nu}}(\operatorname{Box}_2(x,r)\cap\varphi^{-1}(\varphi(x)))=\omega_{\nu-\tilde{\nu}}r^{\nu-\tilde{\nu}}(1+o(1)),
$$
where $o(1)\to0$ as $r\to0$, and
\begin{align}\label{derivative}
D_{{\cal H}^{N-\widetilde{N}}}{\cal
H}^{\nu-\tilde{\nu}}(x)&={\alpha(x)}=\lim\limits_{r\to0}\frac{\omega_{\nu-\tilde{\nu}}r^{\nu-\tilde{\nu}}}{{\cal
H}^{N-\widetilde{N}}(\operatorname{Box}_2(x,r)\cap\varphi^{-1}(\varphi(x)))}\\
&=\frac{1}{\widetilde{\mathcal D}(g|_{\ker
D\varphi(x)})}\cdot\frac{\omega_{\nu-\tilde{\nu}}}{\prod\limits_{k=1}^{M}{\omega_{n_k-\tilde{n}_k}}}\cdot
\frac{\mathcal D(\widehat{D}\varphi(x))}{\mathcal D({D}\varphi(x))}.
\end{align}
The proof is complete.
\end{proof}

\medskip

The last result motivates the next definition.

\begin{defn} The {\it sub-Riemannian coarea factor} equals
$$
{\cal J}^{SR}_{\widetilde{N}}(\varphi,x)=\mathcal D(\widehat{D}\varphi(x))\cdot
\frac{\omega_{N}}{\omega_{\nu}}\frac{\omega_{\tilde{\nu}}}{\omega_{\widetilde{N}}}
\frac{\omega_{\nu-\tilde{\nu}}}{\prod\limits_{k=1}^{M}\omega_{n_k-\tilde{n}_k}}.
$$
\end{defn}

\begin{rem}\label{exprcf}
By {\eqref{derivative}}, we have
$${\mathcal J}^{SR}_{\widetilde{N}}(\varphi,x)=\mathcal J_{\widetilde{N}}(\varphi,x)\cdot D_{{\cal H}^{N-\widetilde{N}}}{\cal
H}^{\nu-\tilde{\nu}}(x)\cdot\frac{\widetilde{\mathcal D}(g(x))}{\widetilde{\mathcal D}(\tilde{g}(\varphi(x)))}\cdot \frac{\omega_{N}}{\omega_{\nu}}\frac{\omega_{\tilde{\nu}}}{\omega_{\widetilde{N}}}
$$
since
$$
\mathcal J_{\widetilde{N}}(\varphi,x)={{\mathcal D}({D}\varphi(x))}\cdot\frac{\widetilde{\mathcal D}(\tilde{g}(\varphi(x)))\widetilde{\mathcal D}(g|_{\ker
D\varphi(x)})}{\widetilde{\mathcal D}(g(x))}.
$$
\end{rem}

\begin{cor}\label{recov_meas} Theorem ${\ref{derivativemeas}}$,
Corollary ${\ref{abscont}}$, and the Lebesgue differentiation theorem $($see, for instance, {\rm\cite{vu}}$)$ imply that
every measurable subset of a regular set $A\subset(\mathbb
D\cap\varphi^{-1}(t))$, $t\in\widetilde{\mathbb M}$, satisfies
$$
\mathcal H^{\nu-\tilde{\nu}}(A)=\int\limits_AD_{{\cal
H}^{N-\widetilde{N}}}{\cal H}^{\nu-\tilde{\nu}}(x)\,d\mathcal H^{N-\widetilde{N}}(x).
$$
\end{cor}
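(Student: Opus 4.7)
The plan is to assemble the three ingredients already listed in the statement—Theorem~\ref{derivativemeas}, Corollary~\ref{abscont}, and the Lebesgue differentiation theorem—into a clean Radon--Nikodym-type reconstruction of $\mathcal H^{\nu-\tilde\nu}$ from its density with respect to $\mathcal H^{N-\widetilde N}$. The strategy is to verify the standard hypotheses of the Lebesgue differentiation theorem for the pair of measures $(\mathcal H^{N-\widetilde N},\mathcal H^{\nu-\tilde\nu})$ restricted to the regular part $\mathbb D\cap\varphi^{-1}(t)$, relative to the Vitali basis of ``balls'' $\{\operatorname{Box}_2(x,r)\cap\varphi^{-1}(t)\}$.

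First I would record the absolute continuity: by Corollary~\ref{abscont}, every $\mathcal H^{N-\widetilde N}$-negligible subset of $A\subset\mathbb D\cap\varphi^{-1}(t)$ is also $\mathcal H^{\nu-\tilde\nu}$-negligible, so $\mathcal H^{\nu-\tilde\nu}\ll\mathcal H^{N-\widetilde N}$ on $\mathbb D\cap\varphi^{-1}(t)$. Next I would note that the differentiation basis is admissible: the quasimetric $d_2$ is locally equivalent to $d_{cc}$, the measure $\mathcal H^{N-\widetilde N}$ is doubling on $\operatorname{Box}_2$-balls intersected with the level set (this was exactly what allowed the application of the Vitali covering theorem in Lemma~\ref{opt_cov}), and $\mathcal H^{\nu-\tilde\nu}$ is a finite Borel measure on relatively compact subsets of $\mathbb D\cap\varphi^{-1}(t)$. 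Hence the $\operatorname{Box}_2$-``balls'' form a Vitali relation for $\mathcal H^{N-\widetilde N}$.

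Then I would invoke Theorem~\ref{derivativemeas}: at every regular point $x$ the pointwise derivative
$$
D_{\mathcal H^{N-\widetilde N}}\mathcal H^{\nu-\tilde\nu}(x)=\lim_{r\to0}\frac{\mathcal H^{\nu-\tilde\nu}(\operatorname{Box}_2(x,r)\cap\varphi^{-1}(\varphi(x)))}{\mathcal H^{N-\widetilde N}(\operatorname{Box}_2(x,r)\cap\varphi^{-1}(\varphi(x)))}
$$
exists and is given by the explicit continuous expression displayed in Theorem~\ref{derivativemeas}; in particular this density is Borel measurable and locally bounded on $A$. Combined with the absolute continuity and the Vitali property, the classical Lebesgue differentiation theorem (see, e.g., \cite{vu}) yields the identity
$$
\mathcal H^{\nu-\tilde\nu}(A)=\int_A D_{\mathcal H^{N-\widetilde N}}\mathcal H^{\nu-\tilde\nu}(x)\,d\mathcal H^{N-\widetilde N}(x)
$$
for every measurable $A\subset\mathbb D\cap\varphi^{-1}(t)$.

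The only real point to watch, and thus the main potential obstacle, is confirming that all the hypotheses of the abstract Lebesgue--Radon--Nikodym theorem are genuinely met for the non-standard measures on the level set: the doubling/Vitali property of $\mathcal H^{N-\widetilde N}$ on $\varphi^{-1}(t)\cap\mathbb D$ and the local finiteness of $\mathcal H^{\nu-\tilde\nu}$ there. Both are implicitly established earlier in the section (the first through Lemma~\ref{opt_cov} and the equivalence of $d_2$ and $d_{cc}$, the second through the asymptotic $\mathcal H^{\nu-\tilde\nu}(\operatorname{Box}_2(x,r)\cap\varphi^{-1}(\varphi(x)))=\omega_{\nu-\tilde\nu}r^{\nu-\tilde\nu}(1+o(1))$ of Theorem~\ref{derivativemeas}). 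Once these are checked, the statement is a direct application and no further computation is required.
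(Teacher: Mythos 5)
Your proposal is correct and takes essentially the same approach as the paper: the paper states the corollary by citing the three ingredients and leaves the assembly to the reader, and you fill in exactly that standard derivation (absolute continuity from Corollary~\ref{abscont}, the Vitali/doubling property of $\mathcal H^{N-\widetilde N}$ on $\operatorname{Box}_2$-balls intersected with the level set as used in Lemma~\ref{opt_cov}, the pointwise density from Theorem~\ref{derivativemeas}, and the Lebesgue differentiation theorem).
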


\begin{thm}[Local coarea formula for the set $\mathbb D$]\label{coaread} If $\varphi\in C^1(\mathbb M, \widetilde{\mathbb M})$ then
$$
\int\limits_{\mathbb D}{\cal
J}^{SR}_{\widetilde{N}}(\varphi,x)\,d{\cal
H}^{\nu}(x)=\int\limits_{\widetilde{\mathbb M}}\,d{\cal
H}^{\tilde{\nu}}(t)\int\limits_{\varphi^{-1}(t)\cap\mathbb
D}\,d{\cal H}^{\nu-\tilde{\nu}}(u).
$$
\end{thm}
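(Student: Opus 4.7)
The plan is to deduce this local coarea identity by transferring the classical Euclidean coarea formula \eqref{coarea_e} from the open regular set $\mathbb D$ to its sub-Riemannian form, converting every Riemannian Hausdorff measure that appears into its sub-Riemannian counterpart by means of the densities already made explicit in Theorem \ref{Leb_meas}, Theorem \ref{derivativemeas} and Corollary \ref{recov_meas}. The sub-Riemannian coarea factor $\mathcal J^{SR}_{\widetilde N}$ was defined, and Remark \ref{exprcf} was written, precisely so that the resulting conversion factors collapse into $\mathcal J^{SR}_{\widetilde N}$ while the inner integrand on the right-hand side reduces to $1$.

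First, I will apply the classical coarea formula \eqref{coarea_e} in the normal-coordinate charts $\theta_x$ covering $\mathbb D$, and patch via a partition of unity, to obtain the intermediate Riemannian identity
\[
\int_{\mathbb D}\mathcal J_{\widetilde N}(\varphi,x)\,d\mathcal H^N(x)=\int_{\widetilde{\mathbb M}}d\mathcal H^{\widetilde N}(t)\int_{\varphi^{-1}(t)\cap\mathbb D}d\mathcal H^{N-\widetilde N}(u),
\]
where $\mathcal J_{\widetilde N}(\varphi,x)$ is the Riemannian Gram-determinant Jacobian recalled in Remark \ref{exprcf}. This step uses only the $C^1$-smoothness assumption on $\varphi$ and the classical Euclidean coarea formula.

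Next, I will convert each Riemannian Hausdorff measure into its sub-Riemannian counterpart. On the level set, Corollary \ref{recov_meas} gives $d\mathcal H^{N-\widetilde N}(u)=d\mathcal H^{\nu-\tilde\nu}(u)/D_{\mathcal H^{N-\widetilde N}}\mathcal H^{\nu-\tilde\nu}(u)$ with the density of Theorem \ref{derivativemeas}; on the target and source manifolds, analogous Radon--Nikodym relations hold between $d\mathcal H^{\widetilde N}$ and $d\mathcal H^{\tilde\nu}$, and between $d\mathcal H^N$ and $d\mathcal H^\nu$, obtainable by computing the $\mathcal H^N$- and $\mathcal H^\nu$-volumes of $d_2$-balls as in Theorem \ref{Leb_meas}. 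After substituting these three relations into the Riemannian identity above and applying a Fubini-type exchange on the right to bring the factor $D_{\mathcal H^{\widetilde N}}\mathcal H^{\tilde\nu}\circ\varphi$ under the ambient integral on the left, the formula becomes
\[
\int_{\mathbb D}\biggl(\mathcal J_{\widetilde N}(\varphi,x)\cdot D_{\mathcal H^{N-\widetilde N}}\mathcal H^{\nu-\tilde\nu}(x)\cdot\frac{D_{\mathcal H^{\widetilde N}}\mathcal H^{\tilde\nu}(\varphi(x))}{D_{\mathcal H^N}\mathcal H^\nu(x)}\biggr)d\mathcal H^\nu(x)=\int d\mathcal H^{\tilde\nu}(t)\int d\mathcal H^{\nu-\tilde\nu}(u).
\]

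The final step is the algebraic verification that the bracketed integrand on the left coincides with $\mathcal J^{SR}_{\widetilde N}(\varphi,x)$; this is precisely the content of Remark \ref{exprcf} combined with the explicit density formula of Theorem \ref{derivativemeas}. This is where the main obstacle lies: one must carefully track the Gram-determinant factors $\widetilde{\mathcal D}(g)$, $\widetilde{\mathcal D}(\tilde g\circ\varphi)$, $\widetilde{\mathcal D}(g|_{\ker D\varphi})$ and all the normalization constants $\omega_\mu$ through the substitution and ensure they pair off exactly as prescribed by the definition of $\mathcal J^{SR}_{\widetilde N}$. A secondary but non-trivial technical issue is the joint $(t,u)$-measurability needed to justify the Fubini exchange; this follows from continuity of $g$, $\tilde g$, $D\varphi$, $\widehat D\varphi$ (hence of each density) together with measurability of $t\mapsto\int_{\varphi^{-1}(t)\cap\mathbb D}d\mathcal H^{N-\widetilde N}$, which is given by the Riemannian coarea formula itself applied to indicator functions of measurable subsets of $\mathbb D$.
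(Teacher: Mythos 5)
Your proposal is correct and is essentially the paper's own argument, merely reorganized. The paper inserts the weight $\mathcal J^{SR}_{\widetilde N}/(\mathcal J_{\widetilde N}\,\widetilde{\mathcal D}(g))$ directly into the Riemannian coarea formula on $\mathbb D$ and then pushes the density of $\mathcal H^{\tilde\nu}$ with respect to $\mathcal H^{\widetilde N}$ inside the inner integral, whereas you begin with the unweighted Riemannian coarea identity, substitute the three Radon--Nikodym relations, and move the factor $D_{\mathcal H^{\widetilde N}}\mathcal H^{\tilde\nu}\circ\varphi$ across the two integrals; both versions invoke the functional form of the Euclidean coarea formula in the same way and reduce the claim to the same algebraic cancellation, which is exactly the content of Remark~\ref{exprcf} combined with Theorem~\ref{derivativemeas} and Corollary~\ref{recov_meas}. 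The only place you should be a bit more explicit is the form of the two full-dimensional densities: you need, consistently with the paper's convention for $\mathcal D$ and $\widetilde{\mathcal D}$ and the $\omega_\mu$-normalization of spherical Hausdorff measure, that $D_{\mathcal H^N}\mathcal H^\nu$ and $D_{\mathcal H^{\widetilde N}}\mathcal H^{\tilde\nu}$ are (up to the $\omega$-factors that the definition of $\mathcal J^{SR}_{\widetilde N}$ already carries) reciprocal to $\widetilde{\mathcal D}(g)$ and $\widetilde{\mathcal D}(\tilde g)$ respectively, so that the final cancellation against the factor $\widetilde{\mathcal D}(g)/\widetilde{\mathcal D}(\tilde g\circ\varphi)$ in Remark~\ref{exprcf} really does close the argument; once that is recorded, the proof is complete.
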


\begin{proof} The Riemannian coarea
formula {\cite{F}} yields
\begin{multline*}
\int\limits_{\mathbb D}{\cal
J}^{SR}_{\widetilde{N}}(\varphi,x)\,d{\cal
H}^{\nu}(x)=\int\limits_{\mathbb D}\frac{{\cal
J}^{SR}_{\widetilde{N}}(\varphi,x)}{\mathcal D(g(x))}\,d{\cal
H}^{N}(x)\\
=\int\limits_{\widetilde{\mathbb M}}\,d{\cal
H}^{\widetilde{N}}(t)\int\limits_{\varphi^{-1}(t)\cap\mathbb D}\frac{{\cal
J}^{SR}_{\widetilde{N}}(\varphi,u)}{{\cal J}_{\widetilde{N}}(\varphi,u)\widetilde{\mathcal D}(g(u))}\,d{\cal
H}^{N-\widetilde{N}}(u).
\end{multline*}

Calculating the measure derivatives in $\widetilde{\mathbb M}$, we obtain
\begin{multline*}
\int\limits_{\widetilde{\mathbb M}}\,d{\cal
H}^{\widetilde{N}}(t)\int\limits_{\varphi^{-1}(t)\cap\mathbb D}\frac{{\cal
J}^{SR}_{\widetilde{N}}(\varphi,u)}{{\cal J}_{\widetilde{N}}(\varphi,u)\widetilde{\mathcal D}(g(u))}\,d{\cal
H}^{N-\widetilde{N}}(u)\\
=\int\limits_{\widetilde{\mathbb M}}\,d{\cal
H}^{\tilde{\nu}}(t)\int\limits_{\varphi^{-1}(t)\cap\mathbb D}\frac{{\cal
J}^{SR}_{\widetilde{N}}(\varphi,u)\widetilde{\mathcal D}(\tilde{g}(t))}{{\cal
J}_{\widetilde{N}}(\varphi,u)\widetilde{\mathcal D}(g(u))}\,d{\cal H}^{N-\widetilde{N}}(u).
\end{multline*}
Finally, taking Remark {\ref{exprcf}} and Corollary {\ref{recov_meas}} into account, we infer that
\begin{multline*}
\int\limits_{\widetilde{\mathbb M}}\,d{\cal
H}^{\tilde{\nu}}(t)\int\limits_{\varphi^{-1}(t)\cap\mathbb D}\frac{{\cal
J}^{SR}_{\widetilde{N}}(\varphi,u)\widetilde{\mathcal D}(\tilde{g}(t))}{{\cal
J}_{\widetilde{N}}(\varphi,u)\widetilde{\mathcal D}(g(u))}\,d{\cal
H}^{N-\widetilde{N}}(u)\\
=\int\limits_{\widetilde{\mathbb M}}\,d{\cal
H}^{\tilde{\nu}}(t)\int\limits_{\varphi^{-1}(t)\cap\mathbb D}\,d{\cal
H}^{\nu-\tilde{\nu}}(u).
\end{multline*}
The proof is complete.
\end{proof}

\section{The Characteristic Set}

The goal of this section is to prove

\begin{thm}[see also {\cite{Km7}}]\label{charsetth}
If Assumption ${\ref{assumpphi}}$ is fulfilled then
$$
\mathcal H^{\nu-\tilde{\nu}}(\chi_t)=0
$$
for almost all $t\in\widetilde{\mathbb M}$ $($with respect to both $\mathcal H^{\widetilde{N}}$ and $\mathcal H^{\tilde{\nu}}$$)$.
\end{thm}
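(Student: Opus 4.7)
The function $\nu_0\colon\chi\to\mathbb R$ introduced before Theorem~\ref{tang_meas} takes only finitely many integer values, each strictly greater than~$\tilde\nu$ by Theorem~\ref{structure}. Writing $\chi=\bigsqcup_k\chi^k$ with $\chi^k=\{x\in\chi:\nu_0(x)=k\}$, it suffices to show, for each admissible $k>\tilde\nu$, that $\mathcal H^{\nu-\tilde\nu}(\chi^k\cap\varphi^{-1}(t))=0$ for \emph{every} $t\in\widetilde{\mathbb M}$; the ``almost every'' quantifier in the statement is then automatic (and $\mathcal H^{\widetilde N}$ and $\mathcal H^{\tilde\nu}$ have the same null sets on~$\widetilde{\mathbb M}$).

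The heart of the argument is a local covering estimate: for a fixed $x\in\chi^k$ and all sufficiently small $r>0$, the set $\chi^k\cap\varphi^{-1}(\varphi(x))\cap\operatorname{Box}_2(x,r)$ admits a covering by at most $Cr^{-(\nu-k)}$ sub-Riemannian balls of radius~$r$, with $C$ locally uniform in~$x$. Granting this, the Hausdorff-measure definition immediately gives
\[
\mathcal H^{\nu-\tilde\nu}\bigl(\chi^k\cap\varphi^{-1}(\varphi(x))\cap\operatorname{Box}_2(x,r)\bigr)\leq C\cdot r^{-(\nu-k)}\cdot r^{\nu-\tilde\nu}=C r^{k-\tilde\nu},
\]
which tends to~$0$ as $r\to 0$; countable additivity over a box-covering of $\chi^k\cap\varphi^{-1}(\varphi(x))$ then finishes the proof for every level meeting~$\chi^k$, while levels disjoint from~$\chi^k$ contribute nothing. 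Heuristically, what this is saying is that the ``correct'' sub-Riemannian Hausdorff dimension of $\chi^k_t$ is $\nu-k<\nu-\tilde\nu$, so at the critical exponent $\nu-\tilde\nu$ the measure must vanish.

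Establishing the covering estimate is the hard part, and is exactly the ``level surface jumps out of the sub-Riemannian ball'' difficulty flagged in the introduction: the linear tangent-plane analysis of Theorems~\ref{tang_meas} and~\ref{Leb_meas} is no longer sharp at characteristic points. The plan is to invoke the full $C^{M+1}$-smoothness of~$\varphi$ and of the vector fields (Assumption~\ref{assumpphi}) and to Taylor-expand, in normal coordinates~$\theta_x$, each of the $\widetilde N$ functions $X_{i_j}\psi$ realizing the minimal degree sum~$k$, keeping all terms through order~$M$. Repeating the block-triangular analysis of Theorem~\ref{tang_meas} on this polynomial system yields a common zero set whose intersection with $\operatorname{Box}_2(0,r)$ has $\mathcal H^{N-\widetilde N}$-measure of order~$r^{\nu-k}$, while the $C^{M+1}$-remainder forces $\psi^{-1}(\psi(0))\cap\operatorname{Box}_2(0,r)$ to lie in an $o(r)$-neighborhood of this polynomial zero set in the $d_2$ metric. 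A $5r$-covering argument in the spirit of Lemma~\ref{lemma_zero}, applied to this neighborhood rather than to a linear tangent plane, then converts the Lebesgue-measure bound into the required count of $d_2$-balls.

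The main obstacle, as the authors foreshadow by their ``intrinsic method'' remark, is making the higher-order Taylor analysis and the control of the remainder uniform on a small compact set in the absence of a global group structure on~$\mathbb M$: one must work entirely with the local homogeneous groups $\mathcal G^u\mathbb M$ and ensure that the error terms produced by the non-group coordinate change $\theta_u$ do not destroy the gain of one power of~$r$ that separates $\nu-k$ from $\nu-\tilde\nu$. Everything else --- the stratification by~$\nu_0$, the counting of balls, and the transfer between the different Hausdorff measures on~$\widetilde{\mathbb M}$ --- is essentially bookkeeping once this estimate is in hand.
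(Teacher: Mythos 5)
Your high-level scaffolding agrees with the paper's: stratify $\chi$ by a discrete invariant, establish a Lebesgue-measure estimate of order $r^{\nu-k}$ for the intersection of a level set with a sub-Riemannian ball at a characteristic point, and then convert this via a $5r$-covering argument into vanishing of $\mathcal H^{\nu-\tilde\nu}$. Your stratification by the value of $\nu_0$ is a coarsening of the paper's finer decomposition $\chi=\bigcup\chi_{p_1\ldots p_M}$ where $p_i=\dim(H_i(x)\cap\ker D\varphi(x))$; the paper's finer pieces are what make the measurability and density-point arguments actually go through (a nontrivial auxiliary basis construction is needed in Lemma~\ref{equivqm} and Steps~III--IV of the paper's proof). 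That said, the counting of balls and the final limit $\delta^{\nu_0-\tilde\nu}\to 0$ are essentially the same bookkeeping in both arguments.

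The genuine gap is in the heart of the estimate. You correctly flag that the linear analysis of Theorems~\ref{tang_meas} and~\ref{Leb_meas} is not sharp at characteristic points and that one must go to order $M$, but the mechanism you propose --- Taylor-expanding the derivatives $X_{i_j}\psi$, ``repeating the block-triangular analysis'' on the resulting polynomial system, and reading off the measure of its zero set --- is not what the paper does and is not obviously realizable. The paper's decisive move is different: it Taylor-expands $\psi$ itself to order $M$ along the tangent directions $Z_{j_i}\in\ker D\psi(0)$, and then uses the fact that a density point $0$ of $\chi_{p_1\ldots p_M}$ is a density point of the vanishing set of each $D\psi\,Z_{j_i}$. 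Since a $C^1$ function vanishing on a set of density one at a point has vanishing derivative there, this kills the second-order Taylor coefficient, and by induction all coefficients through order $M$. The conclusion $\psi(y)=o(|y|^M)$ then says the level set stays $o(r^M)$-Riemannian-close (hence $o(r)$-close in $d_2$) to the tangent plane itself, so Theorem~\ref{tang_meas} applies directly; one never has to analyze a polynomial variety. This density-point coefficient-killing step is exactly the ``intrinsic method'' the introduction promises, and without it there is no route to the $r^{\nu-\nu_0}$ estimate. Relatedly, your claim that the bound holds for every $t$ is an overreach: the $o(r^M)$ tangency is established only at density points, so one obtains the estimate for $\chi_{p_1\ldots p_M}\setminus\Sigma$ with $\mathcal H^N(\Sigma)=\mathcal H^\nu(\Sigma)=0$, and the exceptional null set must be dispatched separately (the paper invokes Federer's 2.10.25 for this), which is what produces ``almost every $t$'' rather than every $t$.
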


Recall that we denote $\varphi^{-1}(t)\cap\chi$ by~$\chi_t$ for $t\in\widetilde{\mathbb M}$.

We prove the results of this section under the following assumptions.

\begin{assump}
Assume that the basis vector fields $\{X_i\}_{i=1}^N$ in the preimage are of class $C^{M+1}$, and $\{\widetilde{X}_j\}_{j=1}^{\widetilde{N}}$ in the image are of class $C^{1,\tilde{\alpha}}$, $\tilde{\alpha}>0$. Moreover, assume that $\varphi\in C^{M+2}(\mathbb M, \widetilde{\mathbb M})$, where $M$ is the depth of $\mathbb M$ (see Definition {\ref{carnotmanifold}}).
\end{assump}

\begin{lem}\label{equivqm}
Let $\mathbb M$ be a Carnot manifold with $C^{1,\alpha}$-smooth basis vector fields, $\alpha>0$. From the basis vector fields
$\{X_i\}_{i=1}^N$ construct linearly independent vector fields $\{Y_j\}_{j=1}^{N}$ as
$Y_j(x)=\sum\limits_{i=1}^{\dim
H_{\operatorname{deg}X_j}}a_{ji}(x)X_i$, where $a_{ji}(x)$ are $C^{1,\omega}$-smooth functions for all $i$ and $j$, $\omega>0$. Then the quasimetric of type $d_{2}$ constructed with respect to the new basis $\{Y_j\}_{j=1}^{N}$ is locally equivalent to the initial quasimetric~$d_2$.
\end{lem}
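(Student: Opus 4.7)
The plan is to reduce the equivalence $d_2^Y \asymp d_2^X$ to the already-established local equivalence $d_2 \asymp d_{cc}$ (the property stated just before the lemma), applied separately to the $X$-basis and to the $Y$-basis. For this reduction to work, one must verify that $\{Y_j\}$ endows $\mathbb M$ with a Carnot manifold structure in the sense of Definition \ref{carnotmanifold} with the \emph{same} filtration $H_1\subsetneq\cdots\subsetneq H_M$ as $\{X_i\}$, and then observe that $d_{cc}$ itself depends only on that filtration together with the fixed Riemannian tensor on $\mathbb M$.

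First I would show that the filtration is preserved. Set $H_k^Y(v)=\operatorname{span}\{Y_j(v):\operatorname{deg} X_j\leq k\}$. The defining formula $Y_j=\sum_{i\leq\dim H_{\operatorname{deg} X_j}}a_{ji}X_i$ gives $H_k^Y\subseteq H_k$, and since $\dim H_k^Y=\#\{j:\operatorname{deg} X_j\leq k\}=\dim H_k$ by linear independence of $\{Y_j\}$, equality holds. Assigning each $Y_j$ the degree $\operatorname{deg} X_j$, axiom (1) of Definition \ref{carnotmanifold} holds, and axiom (3) is automatic because it is a condition on the filtration alone.

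The main technical step is verifying axiom (2) for $\{Y_j\}$. Using the Leibniz rule,
$$[Y_i,Y_j]=\sum_{l,m}a_{il}a_{jm}[X_l,X_m]+\sum_{l,m}\bigl(a_{il}\,X_l(a_{jm})\,X_m-a_{jm}\,X_m(a_{il})\,X_l\bigr).$$
In the first sum, only pairs with $\operatorname{deg} X_l\leq\operatorname{deg} X_i$ and $\operatorname{deg} X_m\leq\operatorname{deg} X_j$ contribute, and by \eqref{tcomm} each $[X_l,X_m]\in H_{\operatorname{deg} X_l+\operatorname{deg} X_m}\subseteq H_{\operatorname{deg} X_i+\operatorname{deg} X_j}$. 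In the second sum the summands lie in $H_{\max(\operatorname{deg} X_i,\operatorname{deg} X_j)}\subseteq H_{\operatorname{deg} X_i+\operatorname{deg} X_j}$. Combined with Step 1, this equality $H_{\operatorname{deg} X_i+\operatorname{deg} X_j}=H^Y_{\operatorname{deg} X_i+\operatorname{deg} X_j}$ expresses $[Y_i,Y_j]$ as a combination of $\{Y_k:\operatorname{deg} X_k\leq\operatorname{deg} X_i+\operatorname{deg} X_j\}$ with $C^{1,\omega'}$-smooth coefficients (the existence of $X_l(a_{jm})$ being guaranteed by the $C^{1,\omega}$ assumption on $a_{ji}$), which is the content of \eqref{tcomm} for $\{Y_j\}$. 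Since $d_{cc}$ is defined as an infimum of Riemannian lengths over curves tangent to $H_1=H^Y_1$, the two Carnot--Carath\'eodory metrics coincide pointwise: $d_{cc}^Y=d_{cc}^X$. Applying the local equivalence $d_2\asymp d_{cc}$ to both Carnot structures then yields
$$d_2^X\asymp d_{cc}^X=d_{cc}^Y\asymp d_2^Y.$$

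The main obstacle is the verification of axiom (2): the derivative terms produced by the Leibniz rule could a priori escape the filtration, and one must show that their indices stay within $H_{\operatorname{deg} X_i+\operatorname{deg} X_j}$. This is what forces the $C^{1,\omega}$-regularity hypothesis on the $a_{ji}$ — it is exactly what is needed both for the bracket computation to make sense and for $\{Y_j\}$ to inherit enough regularity ($C^{1,\min(\alpha,\omega)}$) to apply the ball--box theorem of \cite{vk_birk} to the $Y$-structure.
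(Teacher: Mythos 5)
Your proof takes essentially the same route as the paper: both reduce the statement to the Ball--Box theorem applied to the $X$-structure and the $Y$-structure separately, chaining $d_2^X\asymp d_{cc}^X$, $d_{cc}^X\asymp d_{cc}^Y$, and $d_{cc}^Y\asymp d_2^Y$. Where you go beyond the paper is in spelling out the preliminary verification that $\{Y_j\}$ actually endows $\mathbb M$ with a Carnot manifold structure satisfying axioms (1)--(3) of Definition~\ref{carnotmanifold} with the same filtration; the paper applies Ball--Box to the $Y$-structure without saying why it is applicable, so your Leibniz-rule check of axiom (2) is a genuine (and welcome) gap-filling. Two small remarks: you assert $d_{cc}^Y=d_{cc}^X$ exactly on the grounds that the Riemannian tensor is fixed, whereas the paper only claims bi-Lipschitz equivalence of lengths --- if, as is customary in this framework, the basis is implicitly declared orthonormal (so that the Riemannian tensor changes with the basis), equality would weaken to local bi-Lipschitz equivalence, but this does not affect the conclusion since only an equivalence is needed. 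Also, the coefficients you produce for the bracket $[Y_i,Y_j]$ in the $Y$-basis are only $C^{0,\min(\alpha,\omega)}$ rather than $C^{1,\omega'}$ (derivative terms $X_l(a_{jm})$ lose a degree of smoothness), but this is exactly the regularity that axiom (2) of Definition~\ref{carnotmanifold} asks for, so the claim survives with the correct exponent.
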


\begin{proof}
Denote the quasimetric of type $d_2$ and the Carnot--Carath\'{e}odory metric constructed with respect to the basis $\{Y_j\}_{j=1}^{N}$ by $d_2^Y$ and $d_{cc}^Y$ respectively. The Ball-Box Theorem of {\cite{vk_birk, k_gafa}} implies that $d_2$ and $d_{cc}$ are locally equivalent. Moreover, since the lengths of a curve calculated with respect to different bases are bi-Lipschitz equivalent, the metrics $d_2^Y$ and $d_{cc}^Y$ are bi-Lipschitz equivalent as well. Considering that $d_{cc}$ and $d_{cc}^Y$ are locally equivalent, we infer that so are $d_2$ and $d_2^Y$.
\end{proof}

\begin{proof}[Proof of Theorem ${\ref{charsetth}}$]
Observe that the condition $\mathcal
H^{N}(\chi)=\mathcal H^{\nu}(\chi)=0$ and
{\cite[2.10.25]{F}} immediately imply the result.

Assume now that $\mathcal H^{N}(\chi)>0$.

{\bf Step I.} Divide $\chi$ into finitely many sets, each of which is defined by the structure of $\ker D\varphi$. In other words, represent $\chi$ as
$$
\chi=\bigcup\limits_{p_1,\ldots,p_{M}}\chi_{p_1\ldots p_{M}},
$$
where $$\chi_{p_1\ldots p_{M}}=\Bigl\{x\in\chi:\
\dim({H}_i(x)\cap\ker{D}\varphi(x))=p_i,\ i=1,\ldots, M\Bigr\}.
$$
The assumption $\mathcal H^{N}(\chi)>0$ implies the existence of a collection $\{p_1,\ldots,p_{M}\}$ with $\mathcal
H^{N}(\chi_{p_1\ldots p_{M}})>0$ since $\mathcal H^{N}(\chi)\leq\sum\limits_{p_1,\ldots,p_{M}}\mathcal
H^{N}(\chi_{p_1\ldots p_{M}})$.

{\bf Step II.} Fix a collection
$\{p_1,\ldots,p_{M}\}$ with this property, the corresponding set $\chi_{p_1\ldots p_{M}}$, and $x\in\chi_{p_1\ldots p_{M}}$ (which is not necessarily a density point). The purpose of Steps {\bf II}--{\bf V} is to prove that
\begin{itemize}
\item the set $\chi_{p_1\ldots p_{M}}$ is measurable;
\item in the coordinates of the first kind the order of tangency between a level set and its tangent plane is $o(r^M)$ at almost every~$x\in\chi_{p_1\ldots p_{M}}$ (with respect to both $\mathcal H^N$ and $\mathcal H^{\nu}$),
\end{itemize}
these facts that are independent of the basis transformations described in Lemma~{\ref{equivqm}} on sufficiently small $\mathcal U\Subset\mathbb M$.
Thus, without loss of generality we may assume that at this chosen point $x$ the differential $D\varphi(x)$ is degenerate on basis vector fields $X_{i_1}(x),\ldots, X_{i_{N-\widetilde{N}}}(x)$. Consider a compact neighborhood
$U\Subset\mathbb M$ at whose points $y$ the differential $D\varphi(y)$ does not vanish on the same vector fields as at $x$.

{\bf Step III.} On Step {\bf IV}, we are going to prove that $\mathcal
H^{N}(\chi_{p_1\ldots p_{M}})$ is measurable. To this end, we now construct a new auxiliary basis $\{Y_j\}_{j=1}^{N}$ in the same way as in Lemma {\ref{equivqm}}. This construction consists of several steps.

{\bf (i)} The purpose of this substep is to obtain a new horizontal basis the nonzero images of whose vectors under $D\varphi$ at $\chi_{p_1\ldots
p_{M}}$ are independent.

The definition of $\chi_{p_1\ldots p_{M}}$ and the structures of $D\varphi$ and $\widehat{D}\varphi$ (see {\eqref{subriemdiff}} and {\eqref{riemdiff}}) imply that
$$\dim (\ker\widehat{D}\varphi(x)\cap {H}_1(x))=\dim (\ker
D\varphi(x)\cap H_1(x))=p_1,
$$ where $p_1>n_1-\tilde{n}_1$ (see Proposition {\ref{propdiff}}, claim {\bf I (a)}). Then, the images of the remaining $n_1-p_1$ horizontal vectors are linearly independent on $\varphi(U)$.

Next, we consider the matrix of the classical differential
$D\varphi$ written with respect to the basis vector fields
$X_1,\ldots,X_{N}$ in the preimage and
$\widetilde{X}_1,\ldots,\widetilde{X}_{\widetilde{N}}$ in the image.

Consider the $\tilde{n}_1\times n_1$ block of the matrix of $D\varphi$ that corresponds to the horizontal subspaces. Identify its $n_1$ columns as vectors in $\mathbb R^{\tilde{n}_1}$. Then, at $x$ we choose from this collection $n_1-p_1$ linearly independent vectors $w_1(x),\ldots,w_{n_1-p_1}(x)$. At a point $y\in U$ we denote the corresponding column vector as $w_1(y),\ldots,w_{n_1-p_1}(y)$. The remaining $p_1$ vectors $v_1(y),\ldots,v_{p_1}(y)$ at $y\in\chi_{p_1\ldots p_{M_1}}\cap U$ are contained in the linear span of the first ones. Consequently, for the set $\chi_{p_1\ldots p_{M}}$ there exists a transformation $O^{\tilde{n}_1}_{n_1-p_1}(y)$ of $\mathbb R^{\tilde{n}_1}$ depending $C^{M+1}$-smoothly on $y\in U$ such that the images of all $w_1(y),\ldots,w_{n_1-p_1}(y)$ belong to $\mathbb R^{n_1-p_1}\times 0^{\tilde{n}_1-(n_1-p_1)}$.
To see that this mapping is $C^{M+1}$-smooth, it suffices to write the coordinates of the linearly independent vectors $\{w_1(y),\ldots,w_{n_1-p_1}(y), e_{i_1},\ldots, e_{i_{\tilde{n}_1-(n_1-p_1)}}\}$ in the standard basis as a matrix and then to calculate the inverse matrix. (Here $\{e_{i_1},\ldots, e_{i_{\tilde{n}_1-(n_1-p_1)}}\}$ are the vectors of the standard basis in $\mathbb R^{\tilde{n}_1}$ linearly independent together with $w_1(x),\ldots,w_{n_1-p_1}(x)$; without loss of generality  we may assume that the neighborhood $U$ is sufficiently small for the vectors $\{e_{i_1},\ldots, e_{i_{\tilde{n}_1-(n_1-p_1)}}\}$ to be independent together with $w_1(x),\ldots,w_{n_1-p_1}(x)$ for all $y\in U$.) The resulting matrix is the matrix of this transformation.
In other words, the matrix
$$
(O^{\tilde{n}_1}_{n_1-p_1}(y)[w_1(y)],\ldots, O^{\tilde{n}_1}_{n_1-p_1}(y)[w_{n_1-p_1}(y)])
$$
has exactly $\tilde{n}_1-(n_1-p_1)$ vanishing columns. Since we can apply to these vectors an orthogonal transformation independent of $y\in U$ without loss of generality we may assume that these vanishing columns have indices from $n_1-p_1+1$ to~$\tilde{n}_1$.

By the linear dependence, at the points of $\chi_{p_1\ldots p_{M}}$ the images of the column vectors
$v_1(y),\ldots,v_{p_1}(y)$ also belong to $\mathbb R^{n_1-p_1}$.
Represent these images (at an arbitrary point $y\in U$) as
$$
O^{\tilde{n}_1}_{n_1-p_1}(y)[v_k(y)]=(\omega_k^1(y),\omega_k^2(y))^T,
$$
where the dimension of $\omega_k^1(y)$ equals $n_1-p_1$, and the dimension of
$\omega_k^2(y)$ equals $\tilde{n}_1-(n_1-p_1)$. Since $O^{\tilde{n}_1}_{n_1-p_1}(y)\in C^M(U)$, where $U$ is a compact neighborhood of the origin, $X_i\in C^{M+1}$, $i=1,\ldots,N$, it follows that
$\omega_k^1(y)$ depends $C^M$-smoothly on $n_1-p_1$ columns. Hence, there exist smooth coefficients
$b_{k,1}(y),\ldots,b_{k,n_1-p_1}(y)$ with
$\omega_k^1(y)=\sum\limits_{l=1}^{n_1-p_1}b_{k,l}(y)w_l(y)$.
Then if the columns $w_1(y),\ldots,w_{n_1-p_1}(y)$ correspond to the vectors $X_{i_1}(y),\ldots,X_{i_{n_1-p_1}}(y)$, and a column $v_k(y)$
corresponds to $X_{j_k}(y)$, then upon replacing
$X_{j_k}(y)$ by
\begin{equation}\label{change}
X_{j_k}(y)-\sum\limits_{l=1}^{n_1-p_1}b_{k,l}(y)X_{i_l}(y)
\end{equation}
we find that the part of the new column corresponding to $\omega_k^1(y)$ equals zero, where $k=1,\ldots,p_1$.
Next, since at the points of $\chi_{p_1\ldots p_{M}}$ the columns $v_1(y),\ldots,v_{p_1}(y)$ belong to $\mathbb
R^{n_1-p_1}$ we have~$\omega_k^2(y)=0$ on $\chi_{p_1\ldots p_{M_1}}$. Apply the transformation {\eqref{change}}
for all $k=1,\ldots,p_1$. In this case, each new column
$v_k(y)$ vanishes at $y\in\chi_{p_1\ldots p_{M}}$,
$k=1,\ldots,p_1$.

Thus, we obtain a new horizontal basis the nonzero images of whose vectors under $D\varphi$ at $\chi_{p_1\ldots
p_{M}}$ are independent.

{\bf (ii)} Next, we apply similar arguments to the subspace
$H_2$ using the fact that we have already constructed a basis for $H_1$. Namely,
at $y\in\chi_{p_1\ldots p_{M}}$, in each of $p_2-p_1$ vector fields, we delete the parts depending on $n_2-p_2$ linearly independent vector fields. This yields a new basis for $H_2$ the nonzero images of whose nonhorizontal vector fields independent and nonhorizontal (belonging to $\widetilde{H}_2$).

Continuing similarly, we obtain new bases for all
$H_k$, $k=3,\ldots, M$. Denote these vector fields by $\{Y_j\}_{j=1}^{N}$.

{\bf Step IV.} By step {\bf III}, the intersection of  $\chi_{p_1\ldots p_{M}}$ with the closure of every compact neighborhood $W\Subset U$ is closed. Indeed,
take $\{y_l\}\to y$ as $l\to\infty$, where $y_l\in\chi_{p_1\ldots
p_{M}}\cap\overline{W}$. Since $D\varphi(y_l)$ at each point $y_l$ vanishes on a certain collection of basis vector fields by the definition of $\chi_{p_1\ldots p_{M}}$, the continuity of the differential implies that $D\varphi(y)$ vanishes on the same vector fields and is non-degenerate on the rest since $W\Subset U$. Thus, the set $\chi_{p_1\ldots p_M}$ is measurable.

The measurability of $\chi_{p_1\ldots p_{M}}$ implies that almost all its points are density points (both with respect to $\mathcal
H^{N}$ with balls in the Riemannian metric and
$\mathcal H^{\nu}$ with~$\operatorname{Box}_2$-balls).

{\bf Step V.} Estimate now the $\mathcal H^{N-\widetilde{N}}$-measure of the intersection of the level set passing through a density point $x\in\chi_{p_1\ldots p_{M}}$ of $\chi_{p_1\ldots
p_{M}}$, and the ball  $\operatorname{Box}_2(x,r)$. To this end, we use in the image and preimage the normal coordinates with respect to the points $\varphi(x)$ and $x$, and estimate the order of tangency between this level set and the tangent plane. We denote the resulting composition $\theta_{\varphi(x)}^{-1}\circ\varphi\circ\theta_x$ by $\psi$.

Evaluate $\psi$ at an arbitrary point
$y=\sum\limits_{i=1}^{N-\widetilde{N}}t_iZ_{j_i}(0)=\sum\limits_{i=1}^{N-\widetilde{N}}t_ie_{j_i}$,
where $Z_{j_i}=D\theta_x^{-1}[Y_{j_i}]$ are the basis vector fields tangent to the level set
$\psi^{-1}(\psi(0))$ at zero. We have {\cite{nsw}}
\begin{multline*}
\psi(y)=\sum\limits_{j=1}^{M} \Bigl[\frac{1}{j!}
\Bigl(\sum\limits_{i=1}^{N-\widetilde{N}}t_iZ_{j_i}\Bigr)^j\psi\Bigr](0)+o(|y|^{M})
\\=\sum\limits_{j=2}^{M} \Bigl[\frac{1}{j!}
\Bigl(\sum\limits_{i=1}^{N-\widetilde{N}}t_iZ_{j_i}\Bigr)^j\psi\Bigr](0)+o(|y|^{M}),
\end{multline*}
where $o(\cdot)$ is uniform on $U$. Since $0$ is a density point of zeroes of $D\psi Z_{j_i}$, the result of the action of every differential operator on it vanishes at $0$ as well:
$$\Bigl(\sum\limits_{i=1}^{N-\widetilde{N}}t_iZ_{j_i}\Bigr)^2\psi(0)=0.
$$
Similar statements hold for almost all (with respect to both $\mathcal H^N$ and $\mathcal H^{\nu}$) points of $\chi_{p_1\ldots p_{M}}$. Consequently, $0$ is a density point of the zeroes of the function $$
f(y)=\Bigl(\sum\limits_{i=1}^{N-\widetilde{N}}t_iZ_{j_i}\Bigr)^2\psi(y).
$$ Then we obtain
\begin{multline*}
\psi(y)=\sum\limits_{j=3}^{M}
\Bigl[\frac{1}{j!}\Bigl(\sum\limits_{i=1}^{N-\widetilde{N}}t_iZ_{j_i}\Bigr)^j\psi\Bigr](0)
+o(|y|^{M})\\=\sum\limits_{j=4}^{M}
\Bigl[\frac{1}{j!}\Bigl(\sum\limits_{i=1}^{N-\widetilde{N}}t_iZ_{j_i}\Bigr)^j\psi\Bigr](0)
+o(|y|^{M})
\end{multline*}
since $\Bigl(\sum\limits_{i=1}^{N-\widetilde{N}}t_iZ_{j_i}\Bigr)^3\psi(0)=0$.

Applying similar arguments, we arrive at $\psi(y)=o(|y|^{M})$.
This implies that the tangent plane approximates the level set up to $o(|y|^{M})$. Indeed, take
$y$ in the same level set as $0$, and the orthogonal projection $y^{\prime}$ of $y$ along $(\ker
D\psi(0))^{\bot}$ to the tangent plane. Then
$$
\psi(y)=\psi(y^{\prime})+o(|y|^{M})=\psi(y^{\prime})+D\psi(y^{\prime})(y-y^{\prime})+o(|y-y^{\prime}|)=\psi(y^{\prime})+C|y-y^{\prime}|,
$$ and since $|C|\geq C_0>0$ uniformly on $U$, we have
$|y-y^{\prime}|=o(|y|^{M})$. Observe that here $o(\cdot)$ is uniform on $U$. Consequently, Theorem {\ref{tang_meas}} yields
$$\mathcal H^{N-\widetilde{N}}
(\psi^{-1}(\psi(0))\cap\operatorname{Box}_2(0,r))=Lr^{\nu-\nu_0},
$$
where $0<L_1\leq L\leq L_2<\infty$ uniformly on $U$,
and
\begin{equation}\label{intersect}
\mathcal H^{N-\widetilde{N}}
(\varphi^{-1}(\varphi(x))\cap\operatorname{Box}_2(x,r))=C_xr^{\nu-\nu_0},
\end{equation}
$\nu_0>\widetilde{\nu}$. Here the number $\nu_0$ is defined by the set
$\chi_{p_1\ldots p_{M}}$,independently of $x$. Since the order of tangency is independent of the basis, the box in {\eqref{intersect}} is taken in the initial quasimetric $d_2$ which is constructed with respect to the initial basis $\{X_i\}_{i=1}^N$. Moreover, the relation
$$
\mathcal H^{N-\widetilde{N}}
(\varphi^{-1}(\varphi(y))\cap\operatorname{Box}_2(x,r))=C_yr^{\nu-\nu_0}
$$
with $0<K_1\leq C_y\leq K_2<\infty$ holds for all points
$y\in\chi_{p_1\ldots
p_{M}}\setminus\Sigma_{\chi_{p_1\ldots p_{M}}}$ for some set $\Sigma_{\chi_{p_1\ldots p_{M}}}$ with $\mathcal
H^{N}(\Sigma_{\chi_{p_1\ldots p_{M}}})=\mathcal
H^{\nu}(\Sigma_{\chi_{p_1\ldots p_{M}}})=0$.

{\bf Step VI.} Suppose that $z\in\widetilde{\mathbb M}$. Estimate the measure
$$
\mathcal H^{\nu-\widetilde{\nu}}(\varphi^{-1}(z)\cap(\chi_{p_1\ldots
p_{M}}\setminus\Sigma_{\chi_{p_1\ldots p_{M}}})\cap W)
$$
for $W\Subset U$. Since $d_{2}$ and
$d_{cc}$ are locally equivalent, the conditions of 5$r$-covering lemma are fulfilled for the balls centered at the points of $\chi_{p_1\ldots p_{M}}\setminus\Sigma_{\chi_{p_1\ldots
p_{M}}}$. Consider a covering $\bigcup\limits_{i\in\mathbb
N}\operatorname{Box}_2(x_i,r_i)$ of the set
$$\varphi^{-1}(z)\cap(\chi_{p_1\ldots
p_{M}}\setminus\Sigma_{\chi_{p_1\ldots p_{M}}})\cap W$$
such that the balls $\{\operatorname{Box}_2(x_i,r_i/5l)\}_{i\in\mathbb N}$ are disjoint. Here the number $l$ depends on the constants in the equivalence of $d_2$ and $d_{cc}$.
Then we have
\begin{multline*}
\mathcal
H^{\nu-\widetilde{\nu}}_{\delta}(\varphi^{-1}(z)\cap(\chi_{p_1\ldots
p_{M}}\setminus\Sigma_{\chi_{p_1\ldots p_{M}}})\cap W)\\
\leq C\delta^{\nu_0-\widetilde{\nu}}\sum\limits_{i\in\mathbb
N}{r_i}^{\nu-\nu_0}\leq C_1\delta^{\nu_0-\widetilde{\nu}}\sum\limits_{i\in\mathbb
N}\mathcal
H^{N-\widetilde{N}}(\varphi^{-1}(\varphi(x))\cap\operatorname{Box}_2(x_i,r_i))\\
\leq C_2\delta^{\nu_0-\widetilde{\nu}}\sum\limits_{i\in\mathbb N}\mathcal
H^{N-\widetilde{N}}(\varphi^{-1}(\varphi(x))\cap\operatorname{Box}_2(x_i,r_i/5l))
\leq\delta^{\nu_0-\widetilde{\nu}}L\to0
\end{multline*}
as $\delta\to0$ since $L\leq\mathcal
H^{N-\widetilde{N}}(\varphi^{-1}(z)\cap(\chi_{p_1\ldots
p_{M}}\setminus\Sigma_{\chi_{p_1\ldots p_{M}}})\cap W)$ is independent of $\delta>0$. Thus, the intersection of every level set with $\chi_{p_1\ldots
p_{M}}\setminus\Sigma_{\chi_{p_1\ldots p_{M}}}$ has
$\mathcal H^{\nu-\widetilde{\nu}}$-measure zero.
Consequently,
$$
\mathcal
H^{\nu-\widetilde{\nu}}(\varphi^{-1}(z)\cap(\chi\setminus\Sigma))=0
$$
where
$\Sigma=\bigcup\limits_{p_1\ldots p_{M}}\Sigma_{\chi_{p_1\ldots
p_{M}}}$ and $\mathcal H^{N}(\Sigma)=\mathcal
H^{\nu}(\Sigma)=0$. Now the theorem follows from~{\cite[2.10.25]{F}}.
\end{proof}

\section{ The Degeneration Set}

\begin{thm}\label{degen} For ${\cal H}^{\tilde{\nu}}$-almost all $t\in\widetilde{\mathbb M}$ we have
$$
{\cal
H}^{\nu-\tilde{\nu}}(\varphi^{-1}(t)\cap Z)=0.
$$
\end{thm}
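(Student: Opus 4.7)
The plan is to combine the classical Riemannian coarea formula with a stratification-and-density argument modelled on the proof of Theorem~\ref{charsetth}. First, I would apply~\eqref{coarea_e} with the integrand $\mathcal J_{\widetilde N}(\varphi,\cdot)$ restricted to $Z$. Since $\mathcal J_{\widetilde N}(\varphi,x)=0$ identically on $Z$ by the very definition of $Z$, the integral vanishes, giving
$$\int_{\widetilde{\mathbb M}}\mathcal H^{N-\widetilde N}(\varphi^{-1}(t)\cap Z)\,d\mathcal H^{\widetilde N}(t)=\int_Z\mathcal J_{\widetilde N}(\varphi,x)\,d\mathcal H^{N}(x)=0,$$
so $\mathcal H^{N-\widetilde N}(\varphi^{-1}(t)\cap Z)=0$ for $\mathcal H^{\widetilde N}$-a.e.\ $t$. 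Since $\mathcal H^{\widetilde N}$ and $\mathcal H^{\tilde\nu}$ are mutually absolutely continuous Radon measures on the Carnot manifold $\widetilde{\mathbb M}$, the same conclusion holds $\mathcal H^{\tilde\nu}$-a.e.\ $t$.

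The second step is to upgrade this Riemannian bound on the fiber to the sub-Riemannian Hausdorff measure $\mathcal H^{\nu-\tilde\nu}$. I would stratify $Z$ by the tuple $\{\dim(H_i(x)\cap\ker D\varphi(x))\}_{i=1}^{M}$, writing $Z=\bigcup Z_{p_1\dots p_M}$, and reuse Steps~\textbf{I}--\textbf{V} of the proof of Theorem~\ref{charsetth} on each stratum. After the smooth change of basis supplied by Lemma~\ref{equivqm} (which is inessential for the bound because the relevant order of tangency is invariant), each stratum is locally closed, so almost all of its points (with respect to both $\mathcal H^N$ and $\mathcal H^{\nu}$) are density points. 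At such a density point $x$, iterating the Taylor expansion of $\psi=\theta_{\varphi(x)}^{-1}\circ\varphi\circ\theta_x$ along directions in $\ker D\psi(0)$ forces the successive differential operators of order $1,2,\dots,M$ to vanish at the origin, yielding $\psi(y)=o(|y|^{M})$. Theorem~\ref{tang_meas} then produces
$$\mathcal H^{N-\widetilde N}\bigl(\varphi^{-1}(\varphi(x))\cap\operatorname{Box}_2(x,r)\bigr)=O(r^{\nu-\nu_0}),$$
with an integer $\nu_0>\tilde\nu$ determined by the chosen stratum.

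From this local box estimate, the $5r$-covering argument of Step~\textbf{VI} in Theorem~\ref{charsetth} gives $\mathcal H^{\nu-\tilde\nu}(\varphi^{-1}(t)\cap Z)=0$ for $\mathcal H^{\tilde\nu}$-almost every $t$, since the exponent surplus $\nu_0-\tilde\nu>0$ in the box measure eventually dominates the number of covering balls and the $\delta$-approximating measure tends to $0$ as the covering radius shrinks. The main obstacle I expect is the Taylor step. On the characteristic set $\chi$ the order-$M$ tangency was driven by the full rank of $D\varphi$ combined with the block-diagonal degeneracy of $\widehat{D}\varphi$; on $Z$ the Riemannian differential itself drops rank, so one must verify carefully that, at a density point of the correct stratum, the basis-changed derivatives of $\psi$ along the enlarged kernel vanish to order $M$ uniformly on compact subsets, with no dependence on the smooth basis transformation used to put $D\varphi$ into a convenient form.
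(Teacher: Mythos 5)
Your first step is sound: applying the Riemannian coarea formula \eqref{coarea_e} to $Z$, on which $\mathcal J_{\widetilde N}$ vanishes identically, does give $\mathcal H^{N-\widetilde N}(\varphi^{-1}(t)\cap Z)=0$ for $\mathcal H^{\widetilde N}$-a.e.\ $t$, and the equivalence of the Riemannian and sub-Riemannian volume measures on $\widetilde{\mathbb M}$ is a correct observation. This much is consistent with the decomposition the paper quotes from the literature in its Step~I.

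The second step, however, has a genuine gap: the machinery of Theorem~\ref{charsetth} cannot be re-run on $Z$. All of Section~3 operates under the standing assumption $x\in\mathbb M\setminus Z$. In particular, Theorem~\ref{tang_meas} (the box-measure estimate you invoke to get $\mathcal H^{N-\widetilde N}(\varphi^{-1}(\varphi(x))\cap\operatorname{Box}_2(x,r))\sim r^{\nu-\nu_0}$) requires full Riemannian rank, and the exponent $\nu_0(x)$ is defined as the minimum over collections $\{X_{i_1},\dots,X_{i_{\widetilde N}}\}$ with $\operatorname{rank}([X_{i_j}\varphi](x))=\widetilde N$ --- a condition that is unsatisfiable on $Z$, so $\nu_0$ is simply undefined there. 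Moreover, at a point of $Z$ the kernel $\ker D\psi(0)$ has dimension strictly larger than $N-\widetilde N$, so the set playing the role of a ``tangent plane'' to the level set is not a plane of the expected dimension, the Taylor expansion argument along tangent directions does not isolate an $(N-\widetilde N)$-dimensional plane, and $\mathcal H^{N-\widetilde N}$ of the intersection with $\operatorname{Box}_2(0,r)$ need not obey a power law (indeed it could be infinite). The $5r$-covering bound of Step~\textbf{VI} therefore has no local estimate to feed on. Finally, there is no direct way to pass from $\mathcal H^{N-\widetilde N}$-nullity of the fiber to $\mathcal H^{\nu-\tilde\nu}$-nullity at points of $Z$: the measure comparison underlying Corollary~\ref{abscont} is proved only at regular points via Theorem~\ref{Leb_meas}.

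The paper's argument avoids all of this. After splitting $Z=Z_1\cup Z_2$ with $\varphi(Z_1)$ of measure zero, it stratifies $Z_2$ by $\operatorname{rank}D\varphi$, introduces the graph map $\xi_i(x)=(\varphi(x),x_{N-i+1},\dots,x_N)$ (bi-Lipschitz on the stratum $C_i$), and applies the \emph{classical} coarea formula to the projection $\pi$ on the rectifiable image $\xi_i(C_i)$. Because the fibers have $\mathcal H^{N-\widetilde N}$-measure zero while the Jacobian of $\pi$ is one at density points, this forces $\mathcal H^{N}(\widetilde C_i)=\mathcal H^{\nu}(\widetilde C_i)=0$: the relevant portion of $Z$ is itself $\mathcal H^{\nu}$-null. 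The conclusion then follows from the \emph{Lipschitz coarea inequality} of [F, 2.10.25] applied directly in the sub-Riemannian metrics, rather than from any level-set tangency estimate. If you want to preserve your Step~1 as an ingredient, you would still need something like the paper's graph/projection trick to upgrade ``fibers are Riemannian-null'' to ``the set $Z$ itself is $\mathcal H^{\nu}$-null (mod a negligible-image piece),'' and then close with the Lipschitz coarea inequality, not with Theorem~\ref{charsetth}'s density-point expansion.
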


\begin{proof} {\bf Step I.} We have $Z=Z_1\cup Z_2$, where ${\cal
H}^{\widetilde{N}}(\varphi(Z_1))={\cal H}^{\tilde{\nu}}(\varphi(Z_1))=0$, and
${\cal H}^{N-\widetilde{N}}(\varphi^{-1}(t)\cap Z_2)=0$ for all
$t\in\widetilde{\mathbb M}$ (see, for instance, {\cite{Km4}}).

{\bf Step II.} Note that ${\cal H}^{\widetilde{N}}(\varphi(Z_2))<\infty$ since otherwise we can represent it as a countable union of sets of finite measure. Next,
decompose $Z_2$ as $\bigcup\limits_{i=0}^{\widetilde{N}-1}C_i$, where $\operatorname{rank}
D\varphi(x)=i$ for $x\in C_i$.

Fix $0\leq i\leq \widetilde{N}-1$. Without loss of generality we may assume that $C_i$ is a compact set. Consider a compact set $\widetilde{C}_i\subset C_i$ such that $\varphi(\widetilde{C}_i)$ is a compact subset of the set of density points of $\varphi(C_i)$ and $\mathcal H^{\widetilde{N}}(\varphi(C_i)\setminus\varphi(\widetilde{C}_i))<\varepsilon$ for a~fixed arbitrary $\varepsilon>0$.
Consider the mapping
$$
\xi_i(x)=(\varphi(x),x_{N-i+1},\ldots,x_{N}):\mathbb
R^{N}\to\mathbb R^{\widetilde{N}}\times\mathbb R^{N-i}.
$$
Observe that $\xi_i({C}_i)$ is an $N$-rectifiable set.
Our assumption implies that $\xi_i$ is locally bi-Lipschitz on
${C}_i$. Consequently, ${\cal H}^{N-\widetilde{N}}(\xi_i(\varphi^{-1}(t)\cap
\widetilde{C}_i))=0$ for all $t\in\widetilde{\mathbb M}$. Consider the mapping $\pi:\xi({C}_i)\to\mathbb R^{\widetilde{N}}$ defined as
$$
\pi(x_1,\ldots, x_{\widetilde{N}+N-i})=(x_1,\ldots, x_{\widetilde{N}}).
$$
Apply the classical coarea formula to $\pi$, $C_i$ and~$\widetilde{C}_i$:
\begin{multline}\label{coareapi}
\int\limits_{\xi({C}_i)}\mathcal J(\pi,x)\,d\mathcal H^{N}(x)\geq\int\limits_{\xi({C}_i)}\chi_{\xi(\widetilde{C}_i)}(x)\mathcal J(\pi,x)\,d\mathcal H^{N}(x)\\
=\int\limits_{\varphi({C}_i)}\,d\mathcal H^{\widetilde{N}}(t)\int\limits_{\pi^{-1}(t)\cap\xi({C}_i)}\chi_{\xi(\widetilde{C}_i)}(u)\,d\mathcal H^{N-\widetilde{N}}(u).
\end{multline}
By the definition of $\widetilde{C}_i$, we find that $\mathcal J(\pi,x)=1$ for $x\in\xi(\widetilde{C}_i)$. Furthermore,
$$\mathcal H^{N-\widetilde{N}}(\pi^{-1}(t)\cap\xi({C}_i))={\cal H}^{N-\widetilde{N}}(\xi_i(\varphi^{-1}(t)\cap
\widetilde{C}_i))=0.
$$ It follows from {\eqref{coareapi}} that $\mathcal H^N(\xi(\widetilde{C}_i))=0$. Finally,
have
$$
{\cal H}^{N}(\widetilde{C}_i)={\cal H}^{\nu}(\widetilde{C}_i)=0
$$
because $\xi_i$ is bi-Lipschitz.
Since $\varepsilon>0$ is arbitrary, there exists a set $\Sigma_i$ of measure zero in $\varphi(C_i)$ such that
$$\mathcal H^N(\varphi^{-1}(\varphi(C_i)\setminus\Sigma_i)\cap C_i)=\mathcal H^{\nu}(\varphi^{-1}(\varphi(C_i)\setminus\Sigma_i)\cap C_i)=0.
$$

Therefore, ${\cal
H}^{\nu}\Bigl(\bigcup\limits_{i=0}^{\widetilde{N}-1}\varphi^{-1}(\varphi(C_i)\setminus\Sigma_i)\cap C_i\Bigr)=0$.

{\bf Step III.} Since $\varphi$ is contact, it is locally
Lipschitz with respect to every sub-Riemannian metric. Reasoning as in {\cite[Theorem 2.10.25]{F}}, we obtain
\begin{multline*}
0\leq\int\limits_{\widetilde{\mathbb M}}\,d{\cal
H}^{\tilde{\nu}}(t)\int\limits_{\varphi^{-1}(t)\cap \bigcup\limits_{i=0}^{\widetilde{N}-1}\varphi^{-1}(\varphi(C_i)\setminus\Sigma_i)\cap C_i}\,d{\cal
H}^{\nu-\tilde{\nu}}(u)\\
\leq C(\operatorname{Lip}(\varphi),\nu,\tilde{\nu}){\cal
H}^{\nu}\Bigl(\bigcup\limits_{i=0}^{\widetilde{N}-1}\varphi^{-1}(\varphi(C_i)\setminus\Sigma_i)\cap C_i\Bigr)=0.
\end{multline*}
Furthermore, the $\mathcal H^{\widetilde{N}}$-measure zero set $\bigcup\limits_{i=0}^{\widetilde{N}-1}\Sigma_i$ is included into an $\mathcal H^{\widetilde{N}}$-measure zero Borel set $\Sigma$. Then, its preimage $\varphi^{-1}(\Sigma)$ is also a Borel set, and the set $\varphi^{-1}(\Sigma)\cap Z_2$ is measurable. By the definition,
$$\mathcal H^{\widetilde{N}}(\varphi(\varphi^{-1}(\Sigma)\cap Z_2))=\mathcal H^{\widetilde{\nu}}(\varphi(\varphi^{-1}(\Sigma)\cap Z_2))=0.
$$ combining the results, we deduce that
$$
\int\limits_{\widetilde{\mathbb M}}\,d{\cal
H}^{\tilde{\nu}}(t)\int\limits_{\varphi^{-1}(t)\cap Z}\,d{\cal
H}^{\nu-\tilde{\nu}}(u)=0,
$$
and thus we complete the proof.
\end{proof}

\section{The Coarea Formula}

\begin{thm}[Coarea Formula] If a contact mapping
$\varphi:\mathbb M\to\widetilde{\mathbb M}$ satisfies Assumption~${\ref{assumpphi}}$ then the coarea formula holds:
\begin{equation}\label{finalcoarea}
\int\limits_{\mathbb M}{\cal J}_{\widetilde{N}}^{SR}(\varphi,x)\,d{\cal
H}^{\nu}(x) =\int\limits_{\widetilde{\mathbb M}}\,d{\cal
H}^{\tilde{\nu}}(t)\int\limits_{\varphi^{-1}(t)}\,d{\cal
H}^{\nu-\tilde{\nu}}(u).
\end{equation}
\end{thm}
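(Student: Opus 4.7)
The plan is to decompose $\mathbb M$ as the disjoint union $\mathbb D\sqcup\chi\sqcup Z$ and to show that both sides of \eqref{finalcoarea} reduce to their counterparts restricted to $\mathbb D$, where the identity is already established by Theorem~\ref{coaread}.

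First I would verify that the sub-Riemannian coarea factor $\mathcal J_{\widetilde N}^{SR}(\varphi,\cdot)=\mathcal D(\widehat D\varphi(\cdot))\cdot(\text{const})$ vanishes off $\mathbb D$. On $\chi$ this is immediate from $\operatorname{rank}\widehat D\varphi<\widetilde N$. On $Z$ the matrix of $D\varphi$ is rank-deficient; since the diagonal blocks $\mathcal V_1(x),\ldots,\mathcal V_{\widetilde M}(x)$ are shared between \eqref{riemdiff} and \eqref{subriemdiff}, at least one $\mathcal V_i$ must be rank-deficient as well, forcing $\mathcal D(\widehat D\varphi)=0$. Hence the left-hand side of \eqref{finalcoarea} equals $\int_{\mathbb D}\mathcal J_{\widetilde N}^{SR}(\varphi,x)\,d\mathcal H^{\nu}(x)$.

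For the right-hand side I would invoke Theorem~\ref{charsetth}, which gives $\mathcal H^{\nu-\tilde\nu}(\chi_t)=0$ for $\mathcal H^{\tilde\nu}$-a.e.\ $t$, together with Theorem~\ref{degen}, which gives $\mathcal H^{\nu-\tilde\nu}(\varphi^{-1}(t)\cap Z)=0$ for $\mathcal H^{\tilde\nu}$-a.e.\ $t$. These sets are Borel ($Z$ is the common zero set of all $\widetilde N\times\widetilde N$ minors of $D\varphi$, and $\chi$ is its relative complement in the analogous locus for $\widehat D\varphi$), so for almost every $t\in\widetilde{\mathbb M}$ we may write
$$
\int_{\varphi^{-1}(t)}d\mathcal H^{\nu-\tilde\nu}(u)=\int_{\varphi^{-1}(t)\cap\mathbb D}d\mathcal H^{\nu-\tilde\nu}(u).
$$
Thus the right-hand side of \eqref{finalcoarea} also localizes to $\mathbb D$, and the identity follows directly from Theorem~\ref{coaread}.

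The main (and essentially only) obstacle here is bookkeeping: all the substantive work---the tangent-plane approximation and the explicit $\mathcal H^{\nu-\tilde\nu}$-derivative (Theorems~\ref{tang_meas}, \ref{Leb_meas} and \ref{derivativemeas}), the $\mathcal H^{\nu-\tilde\nu}$-negligibility of the characteristic and degenerate sets on a.e.\ level set (Theorems~\ref{charsetth} and~\ref{degen}), and the coarea identity on $\mathbb D$ (Theorem~\ref{coaread})---has already been carried out in the preceding sections. The proof of the final theorem is the short assembly step that glues these three pillars together.
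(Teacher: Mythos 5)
Your proof is correct and follows essentially the same route as the paper: decompose $\mathbb M=\mathbb D\sqcup\chi\sqcup Z$, observe that ${\cal J}_{\widetilde N}^{SR}$ vanishes off $\mathbb D$ (since $\mathcal D(\widehat D\varphi)=0$ on $\chi$ trivially, and on $Z$ because rank-deficiency of the block upper-triangular $D\varphi$ forces some diagonal block $\mathcal V_i$ to be rank-deficient, hence $\widehat D\varphi$ as well), localize the right-hand side to $\mathbb D$ via Theorems~\ref{charsetth} and~\ref{degen}, and conclude by Theorem~\ref{coaread}. Your explicit block argument for $Z$ and the Borel-measurability remark are a slight elaboration of what the paper states, but the assembly is identical.
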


\begin{proof}
Obvserve that $\mathbb M=\mathbb D\sqcup\chi\sqcup Z$. Consequently,
\begin{multline*}
\int\limits_{\mathbb M}{\cal J}_{\widetilde{N}}^{SR}(\varphi,x)\,d{\cal
H}^{\nu}(x) =\int\limits_{\mathbb D}{\cal J}_{\widetilde{N}}^{SR}(\varphi,x)\,d{\cal
H}^{\nu}(x)\\
+\int\limits_{\chi}{\cal J}_{\widetilde{N}}^{SR}(\varphi,x)\,d{\cal
H}^{\nu}(x) +\int\limits_{Z}{\cal J}_{\widetilde{N}}^{SR}(\varphi,x)\,d{\cal
H}^{\nu}(x).
\end{multline*}
Theorem {\ref{coaread}} yields
$$
\int\limits_{\mathbb D}{\cal
J}^{SR}_{\widetilde{N}}(\varphi,x)\,d{\cal
H}^{\nu}(x)=\int\limits_{\widetilde{\mathbb M}}\,d{\cal
H}^{\tilde{\nu}}(t)\int\limits_{\varphi^{-1}(t)\cap\mathbb
D}\,d{\cal H}^{\nu-\tilde{\nu}}(u).
$$
Theorem {\ref{charsetth}}, implies
$$
\int\limits_{\chi}{\cal J}_{\widetilde{N}}^{SR}(\varphi,x)\,d{\cal
H}^{\nu}(x)=0=\int\limits_{\widetilde{\mathbb M}}\,d{\cal
H}^{\tilde{\nu}}(t)\int\limits_{\varphi^{-1}(t)\cap\chi}\,d{\cal
H}^{\nu-\tilde{\nu}}(u).
$$
Finally, since ${\cal J}_{\widetilde{N}}^{SR}(\varphi,x)=0$ on $Z$, Theorem {\ref{degen}} yields
$$
\int\limits_{Z}{\cal J}_{\widetilde{N}}^{SR}(\varphi,x)\,d{\cal
H}^{\nu}(x)=0=\int\limits_{\widetilde{\mathbb M}}\,d{\cal
H}^{\tilde{\nu}}(t)\int\limits_{\varphi^{-1}(t)\cap Z}\,d{\cal
H}^{\nu-\tilde{\nu}}(u).
$$
Combinig these relations, we obtain {\eqref{finalcoarea}}. The theorem follows.
\end{proof}

\end{document}